\DeclarePairedDelimiter\abs{\lvert}{\rvert}
\DeclarePairedDelimiter\norm{\lVert}{\rVert}
\let\hide\iffalse
\let\unhide\fi
\newtheorem{theorem}{Theorem}[section]
\newtheorem{definition}[theorem]{Definition}
\newtheorem{lemma}[theorem]{Lemma}
\newtheorem{proposition}[theorem]{Proposition}
\newtheorem{remark}[theorem]{Remark}
\let\p=\partial
\let\t=\vartheta
\let\O=\Omega
\let\t=\theta
\let\b=\beta
\newcommand{\R}{\mathbb{R}}
\renewcommand{\S}{\mathbb{S}}
\newcommand{\be}{\begin{equation}}
\newcommand{\bm}{\begin{multline}}
\newcommand{\ee}{\end{equation}}
\newcommand{\xb}{x_{\mathbf{b}}}
\newcommand{\tb}{t_{\mathbf{b}}}
\newcommand{\Bes}{\begin{eqnarray*}}
	\newcommand{\Ees}{\end{eqnarray*}}
\newcommand{\Be}{\begin{equation}}
\newcommand{\Ee}{\end{equation}}
\def\p{\partial}
\def\O{\Omega}
\def\R{\mathbb{R}}
\def\B{\begin{equation}}
\def\E{\end{equation}}
\def\BN{\begin{eqnarray*}}
\def\EN{\end{eqnarray*}}
\newcommand{\om}{\omega}
\newcommand{\ka}{\kappa}
\newcommand{\CE}{\mathcal{E}}
\numberwithin{equation}{section}
\begin{document}

\title[Boltzmann equation with large-amplitude data and specular reflection]{The Boltzmann equation with large-amplitude initial data and specular reflection boundary condition}

\author[R.-J. Duan]{Renjun Duan}
\address[RJD]{Department of Mathematics, The Chinese University of Hong Kong,
Shatin, Hong Kong, P.R.~China}
\email{rjduan@math.cuhk.edu.hk}

\author[G. Ko]{Gyounghun Ko}
\address[GK]{Department of Mathematics, Pohang University of Science and Technology, South Korea}
\email{gyounghun347@postech.ac.kr}

\author[D. Lee]{Donghyun Lee}
\address[DL]{Department of Mathematics, Pohang University of Science and Technology, South Korea}
\email{donglee@postech.ac.kr}

\begin{abstract}
For the Boltzmann equation with cutoff hard potentials, we construct the unique global solution converging with an exponential rate in large time to global Maxwellians  not only for the specular reflection boundary condition with the bounded convex $C^3$ domain but also for a class of large amplitude initial data where the $L^\infty$ norm with a suitable velocity weight can be arbitrarily large but the relative entropy need to be small. A key point in the proof is to introduce a delicate nonlinear iterative process of estimating the gain term basing on the triple Duhamel iteration along the linearized dynamics.
\end{abstract}

\date{\today}
\keywords{Boltzmann equation, specular boundary condition, large-amplitude initial data, triple iteration, Gronwall argument}
\maketitle

\thispagestyle{empty}
\setcounter{tocdepth}{2}
\tableofcontents

\section{Introduction}

\subsection{Problem} 
In the paper, we are concerned with the initial-boundary value problem on the Boltzmann equation with the specular reflection boundary condition in a $C^3$ uniformly convex bounded domain $\Omega\subset \R^3$. The Boltzmann equation takes the form of
\begin{equation}
\label{def.be}
\p_{t}F + v\cdot\nabla_{x} F = Q(F,F). 
\end{equation}
Here, the unknown $F=F(t,x,v)\geq 0$ stands for the density distribution function of gas particles with velocity $v\in \R^3$ and position  $x\in \Omega$ at time $t>0$. The bilinear Boltzmann collision operator acting only on velocity variables is given by
\begin{align}
Q(G,F)(v)&=\int_{\mathbb{R}^3}\int_{\mathbb{S}^2} B(v-u,\om)\left[G(u')F(v')
-G(u)F(v)\right]\,d\omega du,
\label{def.Q}
\end{align}
where the velocity pairs $(u',v')$ and $(u,v)$  satisfy the relation
\begin{equation*}
u'=u+[(v-u)\cdot\omega]\,\omega,\quad
v'=v-[(v-u)\cdot\omega]\,\omega,
\end{equation*}
with $\om\in \S^2$, according to conservations of momentum and energy of two particles for an elastic collision
\begin{equation*}
u+v=u'+v',\quad 
|u|^2+|v|^2=|u'|^2+|v'|^2.
\end{equation*}
For the collision kernel $B(v-u,\om)$, it depends only on the relative velocity $|v-u|$ and $\cos\theta:=(v-u)\cdot \om/|v-u|$, cf.~\cite{CIP}. Throughout the paper, we assume that  
\begin{equation}\label{1.4}
B(v-u,\om)=|v-u|^{\ka}q_0(\theta),\quad 
0\leq\ka\leq1,\quad 0\leq q_0(\t)\leq C|\cos\t |,
\end{equation}
for the case of hard potentials with angular cutoff. Under the cutoff assumption, it is convenient to use $Q_\pm(G,F)$ to denote the gain and loss terms on the right-hand side of \eqref{def.Q}, respectively. To solve \eqref{def.be}, we supplement it with initial data
\begin{equation}
\label{id}
F(0,x,v)= F_{0}(x,v),\quad (x,v)\in \Omega\times \R^3,
\end{equation}
and focus only on the specular reflection boundary condition 
\begin{equation} 
\label{specular}
F(t,x,v) = F(t,x,R_{x}v),\quad (t,x,v)\in [0,\infty)\times\partial\Omega\times \R^3, 
\end{equation}
with $R_x v = v- 2 (n(x)\cdot v)n(x)$, where $n(x)$ is the outward normal vector at $x\in \partial \Omega$. For later use, we define the boundary of the phase space as $\gamma:=\{(x,v)\in \partial \O\times \R^3\}$, and split $\gamma$ into an outgoing boundary $\gamma_+$, an incoming boundary $\gamma_-$, and a singular boundary $\gamma_0$:  
\begin{align*}
	\gamma_+=\{(x,v)\in \partial \O\times \R^3 : \quad n(x) \cdot v>0\},\\ 
	\gamma_-=\{(x,v)\in \partial \O\times \R^3 : \quad n(x) \cdot v<0\},\\
	\gamma_0=\{(x,v)\in \partial \O\times \R^3 : \quad n(x) \cdot v=0\}.
\end{align*}

To the end we are devoted to constructing the unique global solution $F(t,x,v)\geq 0$ converging in large time to the global Maxwellian 
$$
\mu=\frac{1}{(2\pi)^{\frac{3}{2}}}e^{-\frac{|v|^2}{2}}
$$ 
on the initial-boundary value problem \eqref{def.be}, \eqref{id} and \eqref{specular} for a class of initial data of large amplitude. In fact, it is well known that for general initial data $F_0(x,v)\geq 0$ with all the physical bounds, the global existence of renormalized solutions was established first by DiPerna-Lions \cite{D-Lion} for the whole space and later by Hamdache \cite{Ha} for a domain bounded or not with general boundary conditions including the specular reflection. In particular, for a bounded domain, a renormalized solution to the initial-boundary value problem \eqref{def.be}, \eqref{id} and \eqref{specular} should exist globally in time for any initial data with
\begin{equation*}
\int_{\Omega}\int_{\R^3} F_0 (1+|v|^2+|\log F_0|)\,dvdx<\infty.
\end{equation*}
However, it is a big open problem to obtain uniqueness of those solutions. Note that the long time asymptotics of such large-data solutions towards equilibrium was also studied in Desvillettes-Villani \cite{DV} under additional uniform-in-time regularity conditions on solutions. The goal of this paper is to show the global-in-time existence of unique solutions in $L^\infty$ space for any nonnegative initial data $F_0(x,v)$ that has a finite $L^\infty$ norm weighted by a suitable velocity function but admits a small enough relative entropy $\iint F_0\ln \frac{F_0}{\mu}$, and further prove the exponential convergence of solutions to $\mu$.

\subsection{Main result}

In terms of the standard perturbation $f$ such that $F(t,x,v)=\mu+\sqrt{\mu}f(t,x,v)$,  the Boltzmann equation \eqref{def.be} can be rewritten as
	\begin{equation}\label{f_eqtn}
	\p_{t}f + v\cdot \nabla_{x} f+Lf  
	= \Gamma(f,f) , 
	\end{equation} 
with the usual notations on the linearized term
 \begin{equation} \label{L_operator}
 	Lf = - \frac{1}{\sqrt{\mu}}\{ Q(\mu, \sqrt{\mu}f)+Q(\sqrt{\mu}f,\mu)\} = \nu(v) f -Kf = \nu(v) f - \int_{\R^3} k(v,\eta)f(\eta)\,d\eta,
 \end{equation}
 and the nonlinear term
 \begin{equation*}
     \Gamma(g,f) = \frac{1}{\sqrt{\mu}}Q(\sqrt{\mu} g, \sqrt{\mu}f).
 \end{equation*}
Here, we have denoted the collision frequency at the equilibrium state 
\begin{equation}
\label{def.nu}
\nu (v) \equiv \int_{\R^3}\int_{\S^2} \vert v-u \vert ^\kappa \mu(u) q_0(\theta)\, d\omega d u \sim  (1+\vert v \vert) ^{\kappa}
\end{equation}
 for $0\leq \kappa \leq 1$. The integral part $Kf=\Gamma(f,\sqrt{\mu})+\Gamma_+(\sqrt{\mu},f)$ in \eqref{L_operator} can be written as $Kf=K_2f-K_1f$ with 
\begin{align} \label{k_1 and k_2}
	\begin{split}
		(K_1f)(v) &= \Gamma_-(f,\sqrt{\mu}) = \int_{\R^3} k_1(v,\eta)f(\eta) \,d \eta, \\ 
		(K_2f)(v)&=  \Gamma_+(\sqrt{\mu},f)+\Gamma_+(f,\sqrt{\mu}) = \int_{\R^3} k_2(v,\eta)f(\eta) \,d \eta. 
	\end{split}
\end{align}
Note that $k(\eta,v)=k_2(v,\eta)-k_1(v,\eta)$. 

For the specular reflection boundary condition \eqref{specular}, it is well known that both mass and energy are conserved for \eqref{def.be}. Without loss of generality, we may always assume that the mass-energy conservation laws hold for any $t\geq 0$, namely, in terms of the perturbation $f$, 
\begin{equation} \label{conservation}
    \int_{\O} \int_{\R^3} \sqrt{\mu}f(t,x,v)  \, dvdx \, =\, 0, \quad 
    \int_{\O} \int_{\R^3} \vert v\vert^2 \sqrt{\mu}f(t,x,v) \, dvdx \, =\, 0.
\end{equation}
We assume that the domain $\O$ is connected and bounded, and there exists a $C^{3}$ function $\xi(x)$ such that $\O = \{ x\in\R^{3} : \xi(x) < 0 \}$. We can choose $\xi$ so that $\nabla\xi(x)\neq 0$ at the boundary and then the outward unit normal vector $n(x) = \nabla\xi(x)/|\nabla\xi(x)|$ is well-defined. Moreover, in this paper, we consider that $\O$ is uniformly convex,  i.e., there is $C_{\O}>0$ such that 
	$\sum_{i,j=1}^{3} \p_{ij}\xi(x)\zeta_{i}\zeta_{j} \geq C_{\O}|\zeta|^{2}$
for any $\zeta\in\R^{3}$.
Meanwhile, $\O$ is said to have the rotational symmetry if  there exist two vectors $x_0$ and $u_0$ such that
\Be \label{rot sym}
	[(x-x_0)\times u_0]\cdot n(x) = 0,\quad \forall x\in\p\O.
\Ee
Under the specular reflection boundary condition in case of a rotational symmetric domain, the angular momentum is also conserved for \eqref{def.be} and hence we always assume
\Be \label{angular conserv}
	\int_{\O} \int_{\R^3} [(x-x_0)\times u_0]\cdot v \sqrt{\mu} f(t,x,v)\, dv dx = 0.
\Ee
As shown in Lemma \ref{decay_entropy}, another uniform-in-time quantity that is non-increasing in time along the nonlinear Boltzmann solution is the relative entropy given by
\begin{equation}\label{Relative entropy}
	\mathcal{E}(F) := \int_{\O} \int_{\R^3} \left(\frac{F}{\mu} \ln \frac{F}{\mu} - \frac{F}{\mu}+1 \right) \mu \, dvdx, 
\end{equation} 
where the integrand is nonnegative since it holds that $a\ln a-a+1\geq 0$ for any $a> 0$. Notice that under the mass conservation, \eqref{Relative entropy} can be reduced to $\mathcal{E}(F)=\iint F\ln \frac{F}{\mu}\,dvdx$.


The main result of this paper is stated as follows.

\begin{theorem} \label{main thm}
Let $\Omega$ be a general $C^3$ bounded uniformly convex domain. Assume that $f_0$ satisfies \eqref{conservation} as well as \eqref{angular conserv} in addition if $\O$ has the rotational symmetry \eqref{rot sym}. Denote a velocity weight function
\begin{equation}
\label{def.vw}
w = w_{\rho}(v)=(1+\rho^2 \vert v \vert ^2) ^\beta e^{\varpi \vert v \vert^2 }
\end{equation}
with fixed constants $0 < \varpi < \frac{1}{64}$ and $\b \geq \frac{5}{2}$, where $\rho>0$ is a constant to be determined later. Then, for any $M_0 > 0$, there are $\rho=\rho(M_0)>0$ and $\epsilon_{0}=\epsilon(M_0)>0$ such that if initial data satisfy that $F_0(x,v)=\mu+\sqrt{\mu}f_0(x,v)\geq 0$ and  
\begin{equation}\label{thm.conid}
	\Vert wf_0 \Vert _{L^\infty} \leq M_0, \quad \mathcal{E}(F_0) \leq \epsilon_0, 
\end{equation}
then the initial-boundary value problem \eqref{def.be}, \eqref{id} and \eqref{specular}  on the Boltzmann equation under the assumption \eqref{1.4} admits a unique global-in-time solution $F(t,x,v)= \mu + \sqrt{\mu}f(t,x,v)\geq 0$ satisfying 
\begin{equation}\label{thm.td}
	\Vert wf(t) \Vert_{L^{\infty}} \leq C (M_0 + M_0^2) \exp\left \{\frac{4}{\nu_0} C (M_0 + M_0^2) \right \} e^{-\vartheta t} , 
\end{equation} 
for all $t\geq 0$, where $C\geq 1$ and $\vartheta = \min\{\frac{\nu_0}{8}, \lambda\}>0$ are generic constants with $\nu_0:= \inf_{v\in \R^3} \nu(v) >0$ and $\lambda>0$ given in Proposition \ref{LeeKim}.  Moreover, if $f_0(x,v)$ is continuous except on $\gamma_0$ and satisfies the initial-boundary compatibility condition
\[
	f_0(x,v) = f_0(x, R_x v),\quad \forall x\in\p\O,
\] 
then $f(t,x,v)$ is also continuous in $[0,\infty)\times \{\overline{\O}\times \R^{3}\backslash \gamma_0\}$. 
\end{theorem}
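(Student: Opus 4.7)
The plan is to establish an a priori $L^\infty$ bound of the form \eqref{thm.td} on $h := wf$ over any time interval where the perturbed solution exists, and to upgrade it to a global bound via a standard continuation argument; positivity and continuity of $f$ will then follow from the usual monotone approximation scheme. The starting point is the mild formulation of \eqref{f_eqtn} along the backward specular trajectory $(X(s),V(s))$ through $(t,x,v)$, which is well-defined and satisfies Guo's velocity-lemma non-grazing bounds thanks to the $C^3$ uniform convexity of $\O$. This gives
\[
h(t,x,v) = e^{-\nu(v)t}\, h_0(X(0),V(0)) + \int_0^t e^{-\nu(v)(t-s)} \bigl[\,w\,Kf\;+\;w\,\Gamma(f,f)\,\bigr](X(s),V(s))\, ds.
\]
A double iteration of the $K$-term, together with the pointwise bounds on $k(v,\eta)$ and the change of variables along each specular segment, controls the linear contribution by $Ce^{-\nu_0 t/2}\|h_0\|_\infty + C\int_0^t e^{-\nu_0(t-s)/2}\|f(s)\|_{L^2_{x,v}}\,ds$ in the standard Guo $L^\infty$--$L^2$ reduction.

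The main novelty is the treatment of the gain part $w\,\Gamma_+(f,f)$. The naive estimate $|w\,\Gamma(f,f)|/\nu \lesssim \|h\|_\infty^2/w(v)$ fails to close when $M_0$ is large, so to extract smallness I would iterate the Duhamel formula twice \emph{inside} the nonlinear term: one copy of $f$ in $\Gamma_+(f,f)$ is replaced by its Duhamel representation driven by the linearized semigroup $e^{-t(\nu-K)}$, and a further Duhamel step is applied to the resulting nested expression. After a change of variables in one of the post-collisional velocity integrations (the Jacobian being nondegenerate off the grazing set), the triply-iterated remainder takes the schematic form $C(M_0+M_0^2)\int_0^t e^{-\nu_0(t-s)/2}\|f(s)\|_{L^2_{x,v}}\,ds$, leaving one free power of $\|f\|_{L^2}$ exposed. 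Smallness of that $L^2$ norm enters through the relative entropy: by the entropy monotonicity Lemma \ref{decay_entropy}, $\mathcal{E}(F(t))\leq\epsilon_0$ for all $t\geq 0$, and a Csisz\'ar--Kullback-type interpolation combined with the a priori bound on $\|wf\|_\infty$ yields $\|f(t)\|_{L^2_{x,v}}^2 \lesssim \epsilon_0 \,\Phi(\|wf(t)\|_\infty)$ with $\Phi$ mild in its argument. Feeding this back together with the exponential $L^2$ decay from Proposition \ref{LeeKim} and running a Gronwall argument on the pair $(\|wf\|_\infty,\|f\|_{L^2})$ produces \eqref{thm.td}; the weight parameter $\rho=\rho(M_0)$ is chosen so that the tail factor $1/w_\rho(v)$ absorbs the residual large constants coming from $M_0+M_0^2$.

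The principal obstacle, in my view, is executing the triple iteration under specular reflection: each Duhamel step introduces a new backward specular characteristic with possibly many bounces off $\p\O$, and the kernel and change-of-variable estimates must be uniform in the number of bounces. The $C^3$ uniform convexity enters decisively through the non-accumulation of reflections and the nondegenerate Jacobian of the $v\mapsto v'$ map away from the grazing set, which is what allows the triply-iterated gain term to be collapsed back to an $L^2$ quantity in $(x,v)$. A subsidiary technical point is that the $L^2$ decay from Proposition \ref{LeeKim} requires the conservation laws \eqref{conservation} (and \eqref{angular conserv} in the rotationally symmetric case), so these constraints must be propagated through the iterative scheme and the $L^\infty$ and $L^2$ bootstraps must be closed simultaneously rather than sequentially.
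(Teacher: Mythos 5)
Your outline captures some of the high-level architecture (mild formulation along specular characteristics, triple iteration motivated by the $C^3$ geometry, relative entropy to extract smallness of the nonlinear contribution, bootstrap to the small-data result of Proposition \ref{LeeKim}), but it has two genuine gaps where the proposed mechanism would actually fail to close for large $M_0$, and both are precisely where the paper's proof spends its real effort.

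First, you never separate the loss term $\Gamma_-(f,f)$ from the collision frequency, and you iterate with the linearized semigroup $e^{-t(\nu-K)}$. This cannot work in the large-amplitude regime: $w\Gamma_-(f,f)$ is \emph{local} in $v$, producing a factor $\nu(v)\,\|h\|_\infty\,h(v)$ at each step, and for $\|h\|_\infty\sim M_0$ large the coefficient of $h$ dominates $\nu(v)$ and destroys the linear decay you are leaning on. The paper instead rewrites the equation as $\partial_t h + v\cdot\nabla_x h + h\,R(F) = K_w h + w\Gamma_+(f,f)$, absorbing $w\Gamma_-(f,f)$ into the quasilinear damping $R(F)=\mathcal{A}F$. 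The role of the weight parameter $\rho$ is then not to ``absorb residual large constants'' in the final estimate, as you suggest, but to guarantee $R(f)(t,x,v)\geq\tfrac12\nu(v)$ \emph{uniformly from time $t=0$} (Lemma \ref{Rf est}, via $\bar M/\rho^3$ small), so that the quasilinear solution operator $G^f(t,s)$ decays exponentially and the whole Duhamel iteration has a usable damping factor.

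Second, even after the quasilinear reformulation, the naive gain-term estimate $w\Gamma_+(f,f)\lesssim\|h\|_\infty\bigl(\int k\,h^2\bigr)^{1/2}$ applied at each of the three Duhamel stages produces the uncontrollable combination $\|h\|_\infty^2\,h_0$ (see the schematic \eqref{Fail} in the paper): each extra iteration tacks on another $\|h\|_\infty$ factor in front of the initial-data contribution, and no amount of relative-entropy smallness in the final $L^2$ integral will kill a coefficient of order $M_0^2$ on $\|h_0\|_\infty$. The paper's essential technical contribution is the alternative pointwise bound of Lemma \ref{Gamma_new}, $|w\Gamma_+(f,f)(v)|\leq C\int|\tilde k_2(v,\eta)|\,|h(\eta)|^2\,d\eta$, which introduces \emph{no} extra $\|h\|_\infty$ factor but instead a mildly more singular (still square-integrable) kernel, together with the observation that the three iterations must use the two gain estimates in the precise order \eqref{Gamma_RJ}--\eqref{Gamma_N}--\eqref{Gamma_RJ}. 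Your proposal contains neither this estimate nor the ordering, and without it the $L^\infty$ a priori bound \eqref{L_inf_esti.result} of the form ``$Ce^{-\nu_0t/4}(1+\int_0^t\|h\|_\infty)(\|h_0\|_\infty+\|h_0\|_\infty^2)+\text{small}$'' — which is what makes the Gronwall argument on $G(t)=1+\int_0^t\|h(s)\|_\infty\,ds$ close — cannot be derived. A subsidiary point: Proposition \ref{LeeKim} is an $L^\infty$ small-data result used only to extend the solution past $T_0$ once $\|h(T_0)\|_\infty<\delta_1$; it does not furnish an $L^2$ decay to be ``fed back'' into the a priori estimate stage as you describe.
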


In contrast to the DiPerna-Lions' large-data theory for general initial data mentioned before, the global well-posedness has been well understood in the perturbation framework and many results exist in this direction. In particular, the problem was first solved by Ukai \cite{Ukai} in case when the space domain is a periodic box and initial data are chosen as $F_0(x,v)=\mu+\sqrt{\mu}f_0(x,v)\geq 0$ such that 
$\sup_{v}\langle v\rangle^\beta \|f_0(\cdot,v)\|_{H^N_x}$
is sufficiently small for suitably large $\beta$ and $N$. Later, Shizuta-Asano \cite{SA} announced an extension to general smooth bounded convex domains for specular reflection boundary condition without much details of the proof. For general bounded convex domains with specular reflection boundary condition, Guo \cite{Guo} gave a complete proof via a novel $L^2$-$L^\infty$ approach to construct the global unique solution provided that $\sup_{x,v}\langle v\rangle^\beta |f_0(x,v)|$ is sufficiently small for suitably large $\beta$ and the boundary of domain is a level set of a real analytic function. Notice that besides specular reflection boundary condition, other types of boundary conditions such as the inflow, bounce-back and diffuse reflection were also treated in \cite{Guo}. In case of the specular reflection boundary condition, Kim-Lee \cite{KL} removed the analyticity assumption in \cite{Guo} and replaced it by the general $C^3$ regularity for a bounded convex domain. Recently, Guo-Zhou \cite{GZ} constructed the global-in-time Boltzmann solution in the diffusive limit for the general Maxwell boundary condition with the full range of accommodation coefficients even in the case when the space domain is not necessarily convex and the boundary admits $C^2$ regularity only; see also Briant-Guo \cite{BG} on the construction of global close-to-equilibrium solutions for the finite Knudsen number and for the Maxwell boundary condition of the partial range of accommodation coefficients without the convexity and analyticity assumptions on $\Omega$. We also refer \cite{EGKM, EGKM-hy, GKTT, GL, KL-nc} for other recent results about non-convex domain, discontinuity, regularity,  and hyrodynamic limit results for the Boltzmann boundary problems.

On the other hand, in those aforementioned results, the initial perturbation $f_0(x,v)$ has to have a small oscillation in space variables due to smallness of $L^\infty$ norm. An extension of \cite{Guo} to a class of large amplitude initial data in the sense that the $L^\infty$ norm was allowed to be arbitrarily large but the $L^p$ norm for some $p<\infty$ is suitably small was recently made in Duan-Huang-Wang-Yang \cite{DHWY17} and Duan-Wang \cite{DW} either for the whole space and periodic box or for the bounded domain with diffusion reflection boundary condition, respectively. Inspired by \cite{DHWY17,DW,KL}, we are going to explore in the paper the possibility of constructing the global Boltzmann solution not only for the specular reflection boundary condition with the $C^3$ boundary regularity but also for a class of large amplitude initial data. It turns out that the direct combination of those approaches in \cite{DHWY17,DW,KL} does not work well. We will have to figure out new difficulties and strategies to treat the problem under consideration.  

%


\subsection{Example of large-amplitude initial data} 
Those conditions on initial data in Theorem \ref{main thm} can be satisfied for a class of functions of large amplitude for pointwise space and velocity variables. To give a typical example, we choose 
\begin{equation}
\label{def.exid}
F_0(x,v)=\mu+\sqrt{\mu} f_0(x,v)\ \text{ with }\ f_0(x,v):=\frac{\phi(x)-1}{w}\sqrt{\mu},
\end{equation}
where the velocity weight $w=w(v)$ is given in \eqref{def.vw} and $\phi(x)$ is to be chosen such that all conditions on $F_0(x,v)$ hold true. First of all, letting $\phi(x)\geq 0$, the nonnegativity of $F_0(x,v)$ is guaranteed in terms of \eqref{def.exid} by 
\begin{equation*}
F_0=\mu +\mu \frac{\phi(x)-1}{w}= \mu\left\{(1-\frac{1}{w}) +\frac{\phi(x)}{w}\right\}\geq 0
\end{equation*}
because $w\geq 1$. And, as long as 
\begin{equation}
\label{mass.con}
\int_\Omega (\phi(x)-1)\,dx=0,
\end{equation}
we have the conservations laws \eqref{conservation}  of mass and energy, as well as angular momentum conservation \eqref{angular conserv} in addition if $\O$ has the rotational symmetry \eqref{rot sym}. Letting $M_0>0$ be finite but arbitrarily large, in terms of the condition on $L^\infty$ norm in \eqref{thm.conid}, we choose $\phi(x)$ such that
\begin{equation*}
M_0=\|wf_0\|_{L^\infty}=\sup_x |\phi(x)-1|\cdot \sup_v \sqrt{\mu} \sim \sup_x |\phi(x)-1|.
\end{equation*} 
For the condition on the relative entropy in \eqref{thm.conid}, in terms of \eqref{def.exid} and \eqref{mass.con},  one has to further require
\begin{equation}
\label{rep1}
\CE(F_0)=\int_\Omega\int_{\R^3}\mu (1+\frac{\phi(x)-1}{w})\ln(1+\frac{\phi(x)-1}{w})\,dvdx
\end{equation}
to be as small as we wish. To verify this, using the convexity of $\Phi(s):=s\ln s$ over $s>0$ and noticing $0<\frac{1}{w}<1$, one has 
\begin{equation*}
\Phi (1+\frac{\phi(x)-1}{w})=\Phi ((1-\frac{1}{w})\cdot 1 +\frac{1}{w}\phi(x))\leq (1-\frac{1}{w})\Phi(1) +\frac{1}{w}\Phi(\phi(x)).
\end{equation*}
By $\Phi(1)=0$, it further holds that 
\begin{equation*}
\Phi (1+\frac{\phi(x)-1}{w})\leq \frac{1}{w}\Phi(\phi(x))=\frac{\phi(x)\ln \phi(x)}{w}.
\end{equation*}
Plugging this into \eqref{rep1} gives that
\begin{equation*}
\CE(F_0)\leq \int_{\Omega} \big(\phi (x)\ln \phi(x)-\phi(x)+1\big)\,dx\int_{\R^3}\frac{\mu}{w}\,dv,
\end{equation*}
where we have used \eqref{mass.con} to rewrite $\int \phi \ln \phi=\int (\phi \ln \phi -\phi+1)$ such that its integrand is nonnegative. Notice that similar to \eqref{def.intrho}, one has $\int_{\R^3}\frac{\mu}{w}\,dv\leq \frac{C_\beta}{\rho^3}$ that can be small for $\rho>0$ large enough. Hence, provided that $\int (\phi \ln \phi -\phi+1)$ is finite, the initial relative entropy $\CE(F_0)$ can be small for any $\rho\geq \rho_0$ with a constant $\rho_0>0$ depending only on $\int (\phi \ln \phi -\phi+1)$ and $\beta$. In sum, those conditions on initial data in Theorem \ref{main thm} can be satisfied for any initial data of the specific form \eqref{def.exid} with $\phi(x)$ satisfying
\begin{equation*}
\left\{\begin{aligned}
&\phi(x)\geq 0, \, x\in \Omega;\quad \sup_{x\in \Omega} \phi(x)<\infty;\\
&\int_{\Omega} (\phi(x)-1)\,dx=0;\\
& \int_{\Omega} \big(\phi (x)\ln \phi(x)-\phi(x)+1\big)dx<\infty.
\end{aligned}\right.
\end{equation*} 
It is obvious to see that such $\phi(x)$ can have an arbitrarily large amplitude, or equivalently it is the case for $F_0(x,v)$ of the form \eqref{def.exid}. Note that different from \cite{DW}, the relative entropy $\int \phi\ln \phi$ does not need to be small.

\begin{remark}
It should be pointed out that although $F_0(x,v)$ can be arbitrarily large for pointwise $x\in \Omega$ and $v\in \R^3$, the local moment functions for any order $m\geq 0$
\begin{equation}
\label{rem.idp}
\int_{\R^3}|v|^mF_0(x,v)\,dv=\int_{\R^3}|v|^m(\mu+\sqrt{\mu}f_0(x,v))\,dv=C_m+\int_{\R^3}|v|^m\sqrt{\mu}f_0(x,v)\,dv
\end{equation} 
have to admit positive lower and upper bounds uniform in $x\in \Omega$, in particular, the initial local density function $\int_{\R^3}F_0(x,v)\,dv$ has to be far from zero and hence is not allowed to contain any vacuum, which is different from the situation in \cite{DW}. In fact,  as seen from the proof, for instance, in terms of \eqref{rho} and \eqref{def.bM}, one can choose 
\begin{equation}
\label{def.consrho}
\rho=[8CC_\ast (M_0+M_0^2)]^{\frac{1}{3}}\exp \{\frac{4}{3\nu_0}C(M_0+M_0^2)\}
\end{equation}
for an arbitrarily large constant $M_0>0$, where $C$ and $C_\ast$ are generic positive constants. Then, recalling \eqref{def.vw}, by the condition $\|wf_0\|_{L^\infty}\leq M_0$, it holds that 
\begin{equation*}
\sup_{x\in\Omega} \left|\int_{\R^{3}}|v|^m\sqrt{\mu} f_0(x,v)\,dv\right|\leq M_0\int_{\R^3} \frac{|v|^m\sqrt{\mu}}{w}\,dv\leq C_{\beta,m}\frac{M_0}{\rho^3},
\end{equation*}
which can be small for $M_0>0$ large enough due to \eqref{def.consrho}, where the constant $C_{\beta,m}>0$ depends only on $\beta$ and $m$. Therefore our claim is true by \eqref{rem.idp}. 
%
\end{remark}

\subsection{Difficulties and strategy of the proof}\label{sec.dsp}

Before giving scheme of the proof of Theorem \ref{main thm}, we first  briefly review ideas of \cite{KL} and \cite{DW} in order to understand new difficulties to be overcome. In the $L^{2}$-$L^{\infty}$ bootstrap argument by \cite{Guo}, it is crucial to derive the following nondegeneracy condition
\Be \label{C}
\left \vert \det \left ( \frac{\p}{\p u} X(s^{\prime}; s, X(s;t,x,v), u) \right ) \right \vert \geq \varepsilon > 0,
\Ee
except for some small regions in phase space, when the double Duhamel iteration is applied.  In the case of the specular reflection boundary condition for a general $C^{3}$ convex domain, however, it is hard to obtain \eqref{C} because of the complicated trajectory form. In \cite{KL}, the main idea is to introduce the triple Duhamel iteration for ensuring another kind of the nondegeneracy condition 
\begin{equation} 
\label{ndc.kl}
\left \vert \det \left(\frac{\p}{\p (|u^{\prime}|, \xi_{1}, \xi_{2})} X(s^{\prime\prime}; s^{\prime}, X(s^{\prime};s,X(s;t,x,v),u), u^{\prime}) \right ) \right \vert \geq \varepsilon > 0
\end{equation}
for two distinct variables $
\{ \xi_{1}, \xi_{2} \} $ in $\{ |u|, \hat{u}_{1}, \hat{u}^{\prime}_{1}, \hat{u}^{\prime}_{2} \}
$, where $(\hat{u}_1,\hat{u}_2)$ and $(\hat{u}_1',\hat{u}_2')$ are the spherical coordinates of unit vectors $\hat{u}=\frac{u}{|u|}\in \S^2$ and  $\hat{u}'=\frac{u'}{|u'|}\in \S^2$, respectively. The above nondegeneracy condition \eqref{ndc.kl} can be verified using some idea of the geometric decomposition of particle trajectory, so a unique global mild solution can be constructed  and converges exponentially to zero in large time when initial data $f_0(x,v)$ is small enough in the velocity-weighted $L^{\infty}_{x,v}$ norm. This strongly implies that one may need to perform the Duhamel iteration thrice to make a proper change of variables and bound it by the $L^{p}$ norm of $f$. 

Meanwhile, by \cite{DW}, in case of diffuse reflection boundary condition, for a class of large-amplitude initial data in the sense that they are arbitrarily large in $L^{\infty}_{x,v}$ norm but small  $L^{2}_{x,v}$ norm, a unique global mild solution may exist and decay exponentially to zero in large time. In fact, \cite{DW} borrowed an idea from \cite{DHWY17} in order to bound the nonlinear term by the product of $L^\infty$ norm and an integrability norm; see \cite[Lemma 3.1, Eq.~(3.4), page 386]{DHWY17}. To overcome the velocity growth of the loss term $\Gamma_{-}(f,f)$ in case of hard potentials, $\Gamma_{-}(f,f)$ is added together with the linear local term $\nu(v)f$ to produce a quasilinear relaxation term $\int_{\R^{3}}\int_{\S^2} B(v-u,\omega) F(t,x,u)\, d\omega du\, f(t,x,v)$. A crucial observation is that although it could vanish initially, the integral part in such relaxation term can be uniformly positive after a certain time ${t}_0$ depending only on initial data by proving that  
\Be \label{Rf}
\int_{\R^{3}} e^{-\frac{|v|^{2}}{8}} |f(t,x,v)| dv \leq \text{some generic small constant},\quad \ \ \forall\,t > t_0,\forall\,x\in \Omega.
\Ee
This gives the linear exponential decay along the characteristic after time $t_0$.  Finally, one can obtain the uniform a priori estimate
\begin{equation} \label{Gwall}
\begin{split}
|h(t,x,v)| 
\lesssim  (\text{initial terms}) \big(1 + \int_{0}^{t} \underbrace{\|h(s)\|_{L^{\infty}}}_{(\ast)} ds \big)e^{-\frac{\nu_{0}}{8}t} + ( 1 + \|h\|^{3}_{L^{\infty}}) \mathcal{P}(\|f_{0}\|_{L^{2}})
\end{split}
\end{equation}
for $h=wf$, which gives the uniform bound of $\|h(t)\|_{L^{\infty}}$ over $[0,T_0]$ and the smallness of $\|h(T_0)\|_{L^\infty}$ for a suitably large time $T_0$ depending only on initial data. Since $\|h_0\|_{L^\infty}$ is not small, the smallness  of $\|h(T_0)\|_{L^\infty}$ is a consequence of smallness of $\|f_0\|_{L^2}$. Based on the small-data result in \cite{Guo}, the solution $f(t,x,v)$ can be further extended from $T_0$ to all time $(T_0,\infty)$ and hence tend exponentially to zero in large time; see the Figure \ref{fig1} below.

 \begin{figure}[h]
    \begin{tikzpicture} [scale=0.8]
    \draw[->,thick] (0,0)--(8,0) node[right] {$t$};
    \draw[->,thick] (0,0)--(0,6) node[above] {$\Vert wf(t) \Vert_{L^\infty}$};
    \draw 
    (0,0) node[below] {0}
    (0,1.5) node[left] {$\delta_1$}
    (0,3) node[left] {$M_0$}
    (0,5) node[left] {$\bar{M}=\bar{M}(M_0)$} 
    (5,0) node[below] {$T_0=T_0(M_0,\delta_1)$};
    \fill (0,3)  circle[radius=2pt];
    \draw [thick, red] plot [smooth,tension=0.5] coordinates{ (0,3) (0.3,3.5)  (0.7,3.49) (1,4.1) (1.4,4.1) (1.6,4.6) (2,4.5) (2.5,5) (3,3.51) (3.2,3.5) (3.6,3) (4,1.5) (5,1)} ;
    \draw [thick, blue] plot [smooth,tension=0.5] coordinates{ (5,1) (5.2,0.999) (5.4,1.5) (5.7, 1.8) (6,1.5) (6.3,1.3) (6.6, 1.4) (7,1) (7.5,0.5) (8,0.2)};
     \fill (5,1)  circle[radius=2pt];
    \draw[thick,dashed,black] (0,3) -- (5,3);
    \draw[thick,dashed,black] (0,5) -- (5,5); 
    \draw[thick,dashed,black] (5,0) -- (5,6);
    \draw[thick,dashed,black] (0,1.5) -- (8,1.5); 
    \end{tikzpicture}
     \caption{Time evolution of the norm $\|wf(t)\|_{L^\infty}$.}\label{fig1}
    \end{figure}
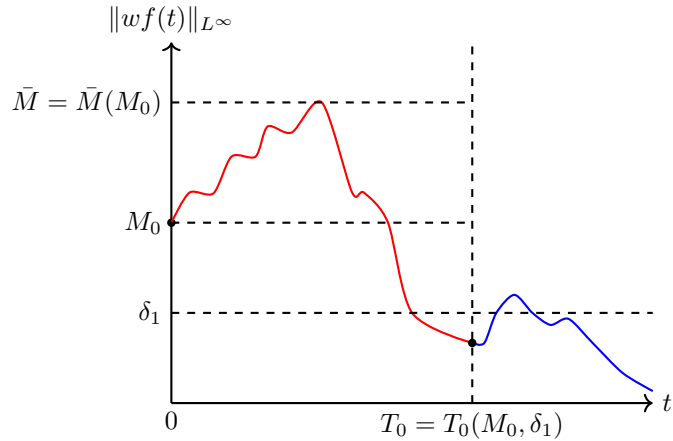
    
\hide
\noindent\textbf{Small amplitude problem in  $C^{3}$ convex domains under specular reflection, [KimLee18]}  
Let us first briefly explain small amplitude boundary problem [Guo10] and [KimLee18]. Fundamental idea of [Guo10] was to use decay of $\|f\|_{L^{2}}$ to obtain decay of $\|wf\|_{L^{\infty}}$. For small amplitude problem, we mainly focus on linearized problem
\[
	\p_{t}f + v\cdot\nabla_{x}f + Lf = 0,
\]
and treat nonlinear $\Gamma(f,f)$ as small perturbation with nonlinear estimate,
$	
	|w(v)\Gamma(f,f)| \lesssim \langle v \rangle \|wf\|_{L^{\infty}}^{2}.	
$
To derive $L^{\infty}$ decay of $h$, double Duhamel expansion along particle trajectory is applied to obtain
\[
	h(t,x,v) \lesssim h_{0} + \int_{0}^{t} \int_{u\in\R^{3}} \int_{u^{\prime}\in\R^{3}} k_{w}(V(s),u)  k_{w}(V(s^{\prime}) ,u^{\prime}) f(s^{\prime}, X(s^{\prime}), u^{\prime}) du du^{\prime}+ \cdots
\]
where $V(s) = V(s;t,x,v)$, $V(s^{\prime}) = V(s^{\prime}; s, X(s;t,x,v), u)$, and $X(s^{\prime}) = X(s^{\prime}; s, X(s;t,x,v), u)$. And key step is to obtain nondegeneracy of particle trajectory
\Be \label{C}
	\left \vert \det \left ( \frac{\p}{\p u} X(s^{\prime}; s, X(s;t,x,v), u) \right ) \right \vert \geq \varepsilon > 0,
\Ee
to apply change of variable $\int_{u\in\R^{3}}$ into $\int_{\O}$. Then, by H\"older, we get
\begin{equation} \label{double}
\begin{split}
h(t,x,v) 
&\lesssim e^{-\nu(v)t}h_{0} + \int_{0}^{t} \|f(s)\|_{L^{2}} ds + \text{small terms}.
\end{split}
\end{equation}
Then using semi-positivity of linearized operator $L$ and coercivity $\|\mathbf{P}f\|_{2} \lesssim \|\mathbf{I-P}f\|_{2}$ (where $\mathbf{P}f$ is hydrodynamics part), we get exponential decay of $\|h(t)\|_{\infty}$. In the case of specular reflection boundary condition with $C^{3}$ convex domain, however, nondegeneracy condition (\ref{C}) is not easy to obtain through double iteration because of complex behavior of trajectory under specular reflection boundary condition. \\

Key idea of [KimLee18] is to apply triple iteration with geometric decomposition of velocity. By decomposing $u$ into $(|u|, \hat{u}_{1}, \hat{u}_{2})$ (speed and directional components), we know that $\frac{\p X(s^{\prime})}{\p|u|}$ yields rank one. Full rank (rank 3) is not easy directly, but through double iteration, we can guarantee rank 2 by choosing $\{|u|, \hat{u}_{1}\}$ WLOG. Inspired by this rank 2 property, we can imagine two dimensional manifold around $X(s^{\prime};s, X(s;t,x,v), u)$ spanned by $\big\{ \p_{|u|}X(s^{\prime}), \p_{\hat{u}_{1}}X(s^{\prime})\big\}$. Applying one more iteration step, we consider nondegeneracy
\Be \label{triple_C}
\left \vert \det \left ( \frac{\p}{\p u^{\prime}} X(s^{\prime\prime}; s^{\prime}, X(s^{\prime};s,X(s;t,x,v),u), u^{\prime}) \right ) \right \vert \geq \varepsilon > 0.
\Ee
Speed component $\p_{|u^{\prime}|}$ gives rank 1 obviously. Among $\{ \p_{|u|}X(s^{\prime}), \p_{\hat{u}_{1}}X(s^{\prime}), \hat{u}^{\prime}_{1}, \hat{u}^{\prime}_{2} \}$, we can show that there exist 
$
	\{ \xi_{1}, \xi_{2} \} \subset \{ \p_{|u|}X(s^{\prime}), \p_{\hat{u}_{1}}X(s^{\prime}), \hat{u}^{\prime}_{1}, \hat{u}^{\prime}_{2} \}
$
such that 
\Be 
\left \vert \det \left(\frac{\p}{\p (|u^{\prime}|, \xi_{1}, \xi_{2})} X(s^{\prime\prime}; s^{\prime}, X(s^{\prime};s,X(s;t,x,v),u), u^{\prime}) \right ) \right \vert \geq \varepsilon > 0.
\Ee
By rank 2 argument choosing,  $\p_{|u|}X(s^{\prime}), \p_{\hat{u}_{1}}X(s^{\prime})$ is equivalent to choose $|u|$, $\hat{u}_{1}$ and hence we can similar estimate as (\ref{double}) with triple velocity integration $\int_{u^{\prime\prime}} \int_{u^{\prime}} \int_{u}$ which comes from triple iteration. \\

\noindent\textbf{Large amplitude problem under diffuse reflection, [DuanWang19]} 
Difficulty of large amplitude solution is quite different to small amplitude solution since we cannot treat the problem as perturbation of linearized Boltzmann equation. In particular, the nonlinear term $\Gamma(f,f)$ cannot be small around initial time.  \\

In [DuanWang19], they borrowed an idea from \cite{DHWY17} in order to bound the nonlinear term by the product of $L^\infty$ norm and an integrability norm; see \cite[Lemma 3.1, Eq.~(3.4), page 386]{DHWY17}. However, in case of hard potentials, in particular, $\kappa >0$, the estimate like  \cite[Eq.~(3.4), page 386]{DHWY17} prevents us using the new Gronwall argument to close the {\it a priori} estimates, because the right-hand terms of \cite[Eq.~(3.4), page 386]{DHWY17} contain an additional factor $\nu(v)\sim (1+|v|)^\kappa$ that may grow in velocity. Fortunately, for the gain term, we may use \cite[Lemma 2.2, Eq.~(2.11), page 52]{Duan} to replace the first estimate of \cite[Eq.~(3.4), page 386]{DHWY17}, but the second estimate of \cite[Eq.~(3.4), page 386]{DHWY17} for the loss term can not be refined such that the pointwise upper bound is bounded with respect to velocity variables. This then forces \cite{Duan} to make use of the quasilinear idea to treat the loss term, namely.  \\
\indent Because of above reason, $w(v)\Gamma_{-}(f,f)$ is conbined with collision frequency part $\nu(v)h$ to obtain $h R(f) := \nu(v)h - w(v)\Gamma_{-}(f,f) = h \iint_{\S^{2}\times\R^{3}} B(v-u,\omega) F(t,x,u) d\sigma du$. To obtain uniform decaying property with $h \mathcal{A}F$, we claim
\Be \label{Rf}
	\int_{\R^{3}} e^{-\frac{|v|^{2}}{8}} |f(t,x,v)| dv \leq \text{some generic constant}
\Ee
which guarantee uniform positivity of $R(f)(t,x,v)$ for $t \geq \tilde{t}$ for some $\tilde{t}_{f_{0}}$, i.e., uniformly decaying mechanism along particle trajectory after some time interval. We apply trajectory expansion once to (\ref{Rf}) and perform change of variable $\int_{\R^{3}}$ into $\int_{\O}$. For diffuse reflection case, we keep track particle trajectory until its first bounce (backward in time) which makes proving (\ref{C}) very easy. However, special treatment for boundary integral which comes from diffuse reflection condition is required. We refer [Guo10] and [DuanWang19] for detail. Some important idea is another way of treating $w(v)\Gamma_{\pm}(f,f)$. In particular, we can estimate $\Gamma_{+}$ as
\Be \label{Gamma_RJ}
	w\Gamma_{+}(f,f) \lesssim \|h\|_{L^{\infty}} \Big\{ \int_{u} k(u) h^{2}(t,x,u) du  \Big\}^{\frac{1}{2}},
\Ee
using Carlemann representation. Here, kernel $k$ has very good decay without singularity. Roughly we have double velocity integration and performing change of variable is available. Assuming smallness of $\|f_{0}\|_{L^{2}}$ in addition, we can derive proper control like (\ref{Rf}).  \\

Then, to derive apriori estimate, we use (\ref{Gamma_RJ}), change of variable with double Duhamel expansion, and smallness of $\|f_{0}\|_{L^{2}}$. For $t \geq \tilde{t}$, with uniform positivity of $R(f)$, we estimate (we ignore $K_{w}$ here)
\begin{equation} \label{Gwall}
\begin{split}
	h(t,x,v) &\lesssim e^{-\frac{\nu_{0}}{2}t}h_{0} + \int_{0}^{t} e^{-\frac{\nu_{0}}{2}(t-s)} \|h\|_{L^{\infty}}  \\
	&\quad \times 
	\Big\{ \int_{\R^{3}}k(u)
	\Big[ h^{2}_{0} + \int_{0}^{s} e^{-\frac{\nu_{0}}{2}(s-s^{\prime})} \|h(s^{\prime})\|^{2}_{L^{\infty}}\int_{\R^{3}} k(u^{\prime}) h^{2}(s^{\prime}, X(s^{\prime}), u^{\prime}) du^{\prime} \Big] du \Big\}^{\frac{1}{2}} \\
	&\lesssim  (\text{initial terms}) \big(1 + \int_{0}^{t} \underbrace{\|h(s)\|_{L^{\infty} } }_{(*)} ds \big)e^{-\frac{\nu_{0}}{8}t} + ( 1 + \|h\|^{3}_{L^{\infty}}) \mathcal{P}(\|f_{0}\|_{L^{2}}).
\end{split}
\end{equation}
Finally we derive uniform decay of $\|h(t)\|_{L^{\infty}}$ from Gronwall type inequality and bootstrap argument.  \\
\unhide

Now let us explain the main scheme of the proof of Theorem \ref{main thm}. This is inspired by the above two results. Moreover, it contains a new idea of controlling several $\Gamma_{+}(f,f)$ terms that appear in each iteration step in the exact order. 

\medskip
\noindent\textbf{\textit{Scheme of proof.}} As in \cite{DW} using smallness of $L^2$ norm, we will use smallness of the relative entropy to control the nonlinear term along the pointwise dynamics in case when $\|h(t)\|_{\infty}$ is no longer small. We note that the relative entropy can come out once we have the double integrations in phase space as explained in Lemma \ref{entropy_est}. In fact, as a viewpoint of the Duhamel expansion, the integration on $\O\times \R^{3}_{v}$ \textit{formally} comes from the double Duhamel iteration and nondegeneracy of the mapping $v\mapsto X(s;t,x,v)$. 

In the case of a general $C^{3}$ convex domain with specular reflection boundary condition, however, it is very hard to perform change of variables via the nondegeneracy of $v\mapsto X(s;t,x,v)$, cf.~\cite{Guo,KL}. From the idea of \cite{KL}, we need the triple Duhamel iterations, instead of the double iteration, to guarantee the nondegeneracy \eqref{ndc.kl} so as to perform change of variables. Then, the integral will be bounded by the relative entropy $\mathcal{E}(F(t))$ which can be further bounded by its initial value $\mathcal{E}(F_{0})$ and hence the smallness of $\mathcal{E}(F_{0})$ can be used to control  growth of the nonlinear term. Unfortunately, one more iteration process generates many difficulties. To understand the issue, as in \cite{DW} to treat the large amplitude solutions for hard potentials case, one has to write the Boltzmann equation \eqref{def.be} as
\Be \label{Rf_expan}
\p_{t}h + v\cdot\nabla_{x}h + R(F)h = K_{w}h + w\Gamma_{+}(f,f), 
\Ee
where $R(F) := \int_{\R^{3}}\int_{\S^2} B(v-u,\omega) F(t,x,u) \,d\omega du$ and $K_{w}h := wK\frac{h}{w}$. For the moment, we assume
\Be \label{Rf_apriori}
R(F)(t,x,v) \gtrsim 1,
\Ee
in order to guarantee the linear decay structure for performing the triple Duhamel iterations in terms of \eqref{Rf_expan}. Then we will see the appearance of a nonlinear term $\|h(s)\|^{2}_{L^{\infty}}$ in the same place of the underbraced $(\ast)$ in \eqref{Gwall}. Hence, even if a proper uniform lower bound of $R(F)$ as in \eqref{Rf_apriori} could hold true, one cannot derive the proper a priori estimate as in \eqref{Gwall} such that the Gronwall argument can be employed. 

Let us see how the above bad term comes out more specifically. In fact, it is induced from the combination of $\|h(s)\|_{L^{\infty}}$ and initial data $h_{0}$ which appears whenever we proceed extra iteration steps. For instance, if one applies the only nonlinear bound
\Be \label{Gamma_RJ}
	w\Gamma_{+}(f,f) \lesssim \|h\|_{L^{\infty}} \Big\{ \int_{u} k(u) h^{2}(t,x,u) du  \Big\}^{\frac{1}{2}},
\Ee
then one should get a higher order term $\|h\|^{2}_{L^{\infty}}$ after making iterations as follows
\Be \label{Fail}
\begin{split}
	w\Gamma_{+}(f,f) 
	&\lesssim  \|h\|_{L^{\infty}} \Big\{ h_{0}^{2} + \int_{0}^{t}\int_{u} \int_{u^{\prime}} k(u) k(u^{\prime}) h^{2} du^{\prime} du  \Big\}^{\frac{1}{2}} + \cdots \\
	&\lesssim  \|h\|_{L^{\infty}} h_{0} + \|h\|_{L^{\infty}} \Big\{ \int_{0}^{t}\int_{u} \int_{u^{\prime}} k(u) k(u^{\prime}) h^{2} du^{\prime} du  \Big\}^{\frac{1}{2}}  + \cdots \\
	&\lesssim   \|h\|_{L^{\infty}} h_{0}  + \underbrace{ \|h\|^{2}_{L^{\infty}} h_{0} }_{\text{higher order}}+ \|h\|_{L^{\infty}} \Big\{ \int_{0}^{t}\int_{u} \int_{u^{\prime}}  \int_{u^{\prime\prime}} k(u) k(u^{\prime}) k(u^{\prime\prime}) h^{2} du^{\prime\prime} du^{\prime} du  \Big\}^{\frac{1}{2}} + \cdots . \\
\end{split}  
\Ee

To resolve the trouble, we shall mix two different ways of treating $\Gamma_{+}$. Indeed, in Lemma \ref{Gamma_new}, we introduce the following estimate 
\begin{equation} \label{Gamma_N}
\abs{ w(v) \Gamma_+ (f,f) (v)}  \leq C \int_{\R^3} \abs{\tilde{k}_2(\eta,v)} \; \abs{h(\eta)}^2 d \eta
\end{equation}
where $\tilde{k}_{2}(\eta, v)$ is integrable and has singularity $|\eta-v|^{-1}$ 
in the hard potential case when $0 \leq \kappa \leq 1$. To prove \eqref{Gamma_N}, the exponential part with the strictly positive $\varpi>0$ in the velocity weight \eqref{def.vw} is crucially necessary to be used.   Compared with (\ref{Gamma_RJ}), the estimate (\ref{Gamma_N}) does not generate any extra large factor $\|h\|_{L^{\infty}}$ but only the strength of singularity in the kernel will be doubled by $|h(\eta)|^{2}$ in the next iteration. For each of three Duhamel iterations, we apply $(\ref{Gamma_RJ})$-$(\ref{Gamma_N})$-$(\ref{Gamma_RJ})$ for $\Gamma_{+}$ estimates in order. We note that this order is very sharp and cannot be changed into another order.

\hide
We also note that for $w(v)\Gamma_{-}(f,f)$, there is no way to avoid $\|h(t)\|_{\infty}$ because $\Gamma_{-}(f,f)$ contains local term. It will be sufficiently large in general near initial time at least. Hence in expansion like (\ref{Fail}), it is also very clear why we should combine $\Gamma_{-}(f,f)$ with $\nu(v)f$. \\
\unhide

To simplify explanation, let us ignore $K_{w}h$ and time integration for simplicity, and focus on large factor $\|h\|_{L^{\infty}}$ and velocity integration because linearized operator $K_{w}h := wKf$ does not generate any large factor. Also, the first expansion with (\ref{Gamma_RJ}) informally gives that 
\[
h(t,x,v) \sim h_{0} + \|h\|_{L^{\infty}} \Big\{ \int_{u} k(u) h^{2}(u) du \Big\}^{\frac{1}{2}}. 
\]
Here, we have ignored time integration for simplicity. Using (\ref{Gamma_N}) for the second iteration, we obtain
\begin{equation*}
\begin{split}
h(t,x,v) \sim h_{0} + \|h\|_{L^{\infty}}h_{0} + \|h\|_{L^{\infty}}  \Big\{ \int_{u} k(u) \Big[ \int_{u^{\prime}} \tilde{k}(u,u^{\prime}) h^{2}(u^{\prime})\Big]^{2} \Big\}^{\frac{1}{2}}
\end{split}
\end{equation*} 
to get the low order of large factor in underbraced term $(*)$ unlike to (\ref{Fail}). Since we have only the double velocity integration, we need one more iteration step to adopt technique and idea of (\ref{ndc.kl}) in \cite{KL}. Applying  (\ref{Gamma_RJ}) again, it gives that
\begin{equation*}
\begin{split}
h(t,x,v) &\sim h_{0} + \|h\|_{L^{\infty}}h_{0} + \|h\|_{L^{\infty}} \Big\{ \int_{u} k(u) \Big[ \int_{u^{\prime}} \tilde{k}(u,u^{\prime}) \big[ \|h\|_{L^{\infty}}^{2} \int_{u^{\prime\prime}} k(u^{\prime\prime}) h^{2}(u^{\prime\prime}) \big] \Big]^{2} \Big\}^{\frac{1}{2}} \\
&\sim h_{0} + \|h\|_{L^{\infty}}h_{0} + \|h\|^{3}_{L^{\infty}} 
\Big\{ \underbrace{ \int_{u} k(u) \int_{u^{\prime}} \tilde{k}^{2}(u,u^{\prime}) \int_{u^{\prime\prime}} k^{2}(u^{\prime\prime}) h^{4}(u^{\prime\prime}) }_{(**)}  \Big\}^{\frac{1}{2}}.
\end{split}
\end{equation*} 
For the underbraced term above, we note that singularity of $\tilde{k}^{2}(u, u^{\prime})$ is still integrable near $u^{\prime}=u$ for all $0\leq \kappa \leq 1$. Hence using $L^{1}$ and $L^{2}$ control by relative entropy from Lemma \ref{entropy_est}, one has  
\[
(**) \sim \|h\|_{L^{\infty}} \int_{u} k(u) \int_{u^{\prime}} \tilde{k}^{2}(u,u^{\prime}) \int_{u^{\prime\prime}} k^{2}(u^{\prime\prime}) h^{2}(u^{\prime\prime}) \lesssim \|h\|_{L^{\infty}}\mathcal{E}(F_{0}) + \text{other small terms}.
\]
Using the above estimate, we then derive the desired crucial a priori estimate similar to (\ref{Gwall}) to continue with the bootstrap argument. Hence, as explained in Figure \ref{fig1}, the global existence and large time behavior of solutions follow basing on the small-amplitude result \cite{KL}.

Now, let us return to the assumption (\ref{Rf_apriori}). To justify it, the key point is to use the velocity weight $w=w_{\rho}(v)$ in \eqref{def.vw} where the parameter $\rho$ will be chosen in terms of $M_0$ that is the upper bound of $\|h_0\|_{L^\infty}$. In fact, under the {\it a priori} assumption that $\|h(s)\|_{L^{\infty}}\leq \bar{M}$ where $\bar{M}$ is given in \eqref{def.bM} depending only on $M_0$ but not on $\rho$, it holds that 
\Be \label{Rf_simple}
\int_{\R^{3}} e^{-\frac{|v|^{2}}{8}} f(t,x,v) dv \leq \|h\|_{\infty} \int_{\R^{3}} e^{-\frac{|v|^{2}}{8}} \frac{1}{(1 + \rho^{2}|v|^{2})^{\b}}dv \lesssim \frac{\bar{M}}{\rho^{3}} \lesssim \text{a generic small constant},
\Ee
provided that $\rho$ is suitably large relative to $M_0$ only. Then, the above estimate guarantees \eqref{Rf_apriori}. We notice that there is an obvious difference between \eqref{Rf_simple} and \eqref{Rf}. In fact, as mentioned before, \eqref{Rf} holds true only for large enough time $t>t_0$ because the integral on the left-hand side of \eqref{Rf} can be large around initial time, while \eqref{Rf_simple} holds true uniformly in time starting from $t=0$. Notice that it is crucial to establish the local-in-time $L^{\infty}$ estimate that has nothing to do with the parameter $\rho$ in the case of specular reflection boundary condition, and hence the a priori upper bound $\bar{M}$ of $\|h(t)\|_{\infty}$ can be chosen independent of $\rho$ whereas $\bar{M}$ depends on $\rho$ in the case of diffuse reflection boundary condition case \cite{DW}. In the end we also remark that it is unclear for us whether one can remove the restrictive dependence of $\rho$ on $M_0$ by carrying out a similar idea in \cite{DW} due to the use of triple iterations for the $C^3$ boundary regularity.    


\hide

We end this section with several remarks.

\begin{remark}
	In the case of specular reflection boundary condition, the parameter $\rho$ has nothing to do with the local-in-time $L^{\infty}$ estimate. Hence, the a priori bound $\|h(t)\|_{\infty} \leq \bar{M}$ can be chosen independent of $\rho$ whereas $\bar{M}$ depends on $\rho$ in the case of diffuse reflection boundary condition case \cite{DW}. Since $\bar{M}$ can be determined only by $M_0$,  choosing $\rho$ to depend on $M_0$ does not give rise to any contradiction.
\end{remark}
\begin{remark}
	Let us treat $\int_{\R^{3}} e^{-\frac{|v|^{2}}{8}}$ as like an integral operator on the LHS of \eqref{Rf_simple}. Then, in the case of [DuanWang19] only one iteration yields double velocity integration and hence corresponding change of variable yield smallness. Combination of $\bar{M}$ and $h_0$ does not happen in that case. Therefore, we can control $R(f)$ as generic small using time decaying factor, say $e^{-\nu_0 t}$, after some time $t > \tilde{t}$. However, in the triple iteration case, we still need two more iterations in \eqref{Rf_simple}. So the bad combination of $\|h\|_{\infty}$ and $h_0$ happens again. This cannot be controlled by generic small factor. This is the main reason why we have chosen large $\rho$ depending on $M_0$, in weight $w_{\rho}(v)$.
\end{remark}
\begin{remark}
	If we consider local mass perturbation $\int \sqrt{\mu}f dv = \int F dv - \int \mu dv$, we know that mass perturbation,
	\[
	\int \sqrt{\mu}f dv \leq \int_{\R^{3}} e^{-\frac{|v|^{2}}{8}} f(t,x,v) dv.
	\]
	From \eqref{Rf_simple}, it must hold from initial time $t=0$. By some computation, initial data will satisfy
	\[
	\sup_{x} \int_{\R^{3}} e^{-\frac{|v|^{2}}{8}} f_0(x,v) dv \leq C \frac{\|h_0\|_{\infty}}{\bar{M}} \leq \text{small generic constant}.
	\]
	Since $\bar{M}$ is determined by only on $\|h_0\|_{\infty}$ and has faster growth than linear growth, local mass perturbation goes to zero as $\|h_0\|_{\infty}$ goes to infinity. Therefore, for very large initial data $\|h_0\|_{\infty}$, local mass $\int F_0(x,v) dv$ is very close to $\int \mu(v) dv$ and hence initial local vacuum is not allowed in general.
\end{remark}

\textbf{\textit{Scheme of proof}} First, from the idea of [KimLee18], we need triple velocity integration to perform change of variable and then it will be  bounded by relative entropy $\mathcal{E}(F(t))$ which monotonically decreases from initial small relative entropy, i.e., $\mathcal{E}(F(t)) < \mathcal{E}(F_{0}) \ll 1$. However, unlike small amplitude problem, one more iteration process generates large factor $\|h(s)\|^{2}_{L^{\infty}}$ in underbraced $(*)$ in (\ref{Gwall}) by (\ref{Gamma_RJ}). Hence, although we assume proper $R(f)$ estimate, we cannot derive decay in apriori estimate.   \\

Main difficulty comes from combination of $\|h(s)\|_{L^{\infty}}$ and initial data $h_{0}$ which appear whenever we add iteration step and does not contain any small factor. To see this effect, let us apply iteration for the integrand of (\ref{Gamma_RJ}). Then whenever we apply iteration we get higher order of $\|h\|_{L^{\infty}}$,
\Be \label{Fail}
\begin{split}
w\Gamma_{+}(f,f) 
&\lesssim  \|h\|_{L^{\infty}} \Big\{ h_{0}^{2} + \int_{0}^{t}\int_{u} \int_{u^{\prime}} k(u) k(u^{\prime}) h^{2} du^{\prime} du  \Big\}^{\frac{1}{2}} + \cdots \\
&\lesssim  \underbrace{ \|h\|_{L^{\infty}} h_{0} }_{\text{large factor}} + \|h\|_{L^{\infty}} \Big\{ \int_{0}^{t}\int_{u} \int_{u^{\prime}} k(u) k(u^{\prime}) h^{2} du^{\prime} du  \Big\}^{\frac{1}{2}}  + \cdots \\
&\lesssim   \|h\|_{L^{\infty}} h_{0}  + \underbrace{ \|h\|^{2}_{L^{\infty}} h_{0} }_{\text{higher order}}+ \|h\|_{L^{\infty}} \Big\{ \int_{0}^{t}\int_{u} \int_{u^{\prime}}  \int_{u^{\prime\prime}} k(u) k(u^{\prime}) k(u^{\prime\prime}) h^{2} du^{\prime\prime} du^{\prime} du  \Big\}^{\frac{1}{2}} + \cdots  \\
\end{split}  
\Ee

To resolve this problem we mix two different ways of treating $\Gamma_{+}$. In Lemma \ref{Gamma_new}, we estimate $w\Gamma_{+}(f,f)$ as 
\begin{equation} \label{Gamma_N}
	\abs{ w(v) \Gamma_+ (f,f) (v)}  \leq C \int_{\R^3} \abs{\tilde{k}_2(\eta,v)} \; \abs{h(\eta)}^2 d \eta
\end{equation}
where $\tilde{k}_{2}(\eta, v)$ is integrable and has singularity $\frac{1}{|\eta-v|}$ in the case of hard potential $0 \leq \kappa \leq 1$. Compared with (\ref{Gamma_RJ}), (\ref{Gamma_N}) does not generate extra large factor $\|h\|_{L^{\infty}}$ but strength of singularity will be stronger after iteration by $|h(\eta)|^{2}$ in integrand. For triple Duhamel iteration, we apply $(\ref{Gamma_RJ})$-$(\ref{Gamma_N})$-$(\ref{Gamma_RJ})$ for $\Gamma_{+}$ in order in each steps. We note that this order is very sharp and cannot be changed into the another order. \\
We also note that for $w(v)\Gamma_{-}(f,f)$, there is no way to avoid $\|h(t)\|_{\infty}$ because $\Gamma_{-}(f,f)$ contains local term. It will be sufficiently large in general near initial time at least. Hence in expansion like (\ref{Fail}), it is also very clear why we should combine $\Gamma_{-}(f,f)$ with $\nu(v)f$. \\

 Now, let us assume (\ref{Rf}) so that $R(f) \geq \frac{\nu(v)}{2}$ and trajectory expansion gives proper exponential decay factor. Now, to simplify explanation, let us ignore $K_{w}h$ and time integration, and focus on large factor $\|h\|_{L^{\infty}}$ and velocity integration because linearized operator $K_{w}h := wKf$ does not generate any large factor. Also,The first expansion with (\ref{Gamma_RJ}) gives
\[
	h(t,x,v) \sim h_{0} + \|h\|_{L^{\infty}} \Big\{ \int_{u} k(u) h^{2}(u) du \Big\}^{\frac{1}{2}} 
\]
Using (\ref{Gamma_N}) for the second iteration, we obtain
\begin{equation*}
\begin{split}
	h(t,x,v) \sim h_{0} + \|h\|_{L^{\infty}}h_{0} + \underbrace{\|h\|_{L^{\infty}} } \Big\{ \int_{u} k(u) \Big[ \int_{u^{\prime}} \tilde{k}(u,u^{\prime}) h^{2}(u^{\prime})\Big]^{2} \Big\}^{\frac{1}{2}}
\end{split}
\end{equation*} 
to get low order of large factor in underbraced term $(*)$ unlike to (\ref{Fail}). Since we have only double velocity integration, we need more iteration step to adopt technique and idea of (\ref{triple_C}) in [KimLee18]. Applying  (\ref{Gamma_RJ}) again, it gives
\begin{equation} \label{apriori_scheme}
\begin{split}
h(t,x,v) &\sim h_{0} + \|h\|_{L^{\infty}}h_{0} + \|h\|_{L^{\infty}} \Big\{ \int_{u} k(u) \Big[ \int_{u^{\prime}} \tilde{k}(u,u^{\prime}) \big[ \|h\|_{L^{\infty}}^{2} \int_{u^{\prime\prime}} k(u^{\prime\prime}) h^{2}(u^{\prime\prime}) \big] \Big]^{2} \Big\}^{\frac{1}{2}} \\
&\sim h_{0} + \|h\|_{L^{\infty}}h_{0} + \|h\|^{3}_{L^{\infty}} 
\Big\{ \underbrace{ \int_{u} k(u) \int_{u^{\prime}} \tilde{k}^{2}(u,u^{\prime}) \int_{u^{\prime\prime}} k^{2}(u^{\prime\prime}) h^{4}(u^{\prime\prime}) }_{(***)}  \Big\}^{\frac{1}{2}} \\
\end{split}
\end{equation} 
For the underbraced term above, we note that singularity of $\tilde{k}^{2}(u, u^{\prime})$ is still integrable near $u^{\prime}=u$ for all $0\leq \kappa \leq 1$. Hence using $L^{1}$ and $L^{2}$ control by relative entropy (See Lemma \ref{entropy_est}),  
\[
	(***) \sim \|h\|_{L^{\infty}} \int_{u} k(u) \int_{u^{\prime}} \tilde{k}^{2}(u,u^{\prime}) \int_{u^{\prime\prime}} k^{2}(u^{\prime\prime}) h^{2}(u^{\prime\prime}) \lesssim \|h\|_{L^{\infty}}\mathcal{E}(F_{0}) + \text{some small terms}.
\]
Using above estimate, we derive apriori estimate simlar as (\ref{Gwall}) to continue with bootstrap argument.  \\

Now, let us return to our assumption (\ref{Rf}), uniform positivity of $R(f)$. Our simple observation is that we can use $\int_{\R^{3}} e^{-\frac{|v|^{2}}{8}}$ in (\ref{Rf}) as first iteration. Moreover, we can treat $e^{-\frac{|v|^{2}}{8}}$ as $k(v)$ which has very nice properties. Therefore, only double iteration for $|h(t,x,v)|$ in integrand is enough to derive triple iteration so that we can perform change of variable. Main difference to (\ref{apriori_scheme}) is that we do not combine $\Gamma_{-}$ with collision frequency to use exponential decay property. For $\Gamma_{-}$, we may use
\Be \label{Gamma_-}
	w(v)\Gamma_{-}(f,f) \lesssim \|h\|_{L^{\infty}} \int_{u} \overline{k}(v,u) h(u) du
\Ee
which is similar to (\ref{Gamma_RJ}) where kernel $\overline{k}$ is integrable and has no singularity. Or it is also possible to use
\Be \label{Gamma_-_v}
	w(v)\Gamma_{-}(f,f) \lesssim \|h\|_{L^{\infty}} \langle v \rangle^{\kappa} \int_{u} e^{-c|u|^{2}} f(u) du,\quad c> 0,
\Ee
since $\langle v \rangle^{\kappa}$ is not harmful anymore by extra decay outside, e.g. $e^{-\frac{1}{8}|v|^{2}}$ in the LHS of (\ref{Rf}). \\
 
 We first expand $h$ with (\ref{Gamma_N}) and (\ref{Gamma_-}). For second iteration, we now use (\ref{Gamma_RJ}) and (\ref{Gamma_-}) as we used (\ref{Gamma_RJ}) in third iteration of (\ref{apriori_scheme}). As a last step, applying $\int_{\R^{3}} e^{-\frac{|v|^{2}}{8}}$ and performing change of variable with triple velocity integration, we can derive a version of (\ref{apriori_scheme}) with $\int_{\R^{3}} e^{-\frac{|v|^{2}}{8}} |h(t,\cdot,v)|_{L^{\infty}_{x}} dv$, i.e., 
\begin{equation} \label{Rf_new}
\begin{split}
\int_{\R^{3}} e^{-\frac{|v|^{2}}{8}} |h(t,\cdot,v)|_{L^{\infty}_{x}} dv 
&\lesssim  (\text{initial terms}) \big(1 + \int_{0}^{t} \Big[ \int_{\R^{3}} e^{-\frac{|v|^{2}}{8}} |h(s,\cdot,v)|_{L^{\infty}_{x}} dv \Big] ds \big)e^{-\frac{\nu_{0}}{2}t} + \text{small terms}.
\end{split}
\end{equation}
Hence, by Gronwall's inequality, (\ref{Rf}) is justified after some time $\tilde{t}$.  \\
\unhide

\subsection{Organization of the paper}
The rest of this paper is organized as follows. In the next Section \ref{sec2}, we make preliminary preparations on the estimates for the operator $K$, the nonlinear gain term $\Gamma_+(f,f)$, the relative entropy, and the nondegeneracy of specular trajectory. In the main part Section \ref{sec3}, we proceed obtaining series of the a priori estimate in order to get the key inequality of the form \eqref{Gwall}. Then, in Section \ref{sec4}, Theorem \ref{main thm} is proved by the Gronwall argument as in \cite{DW} together with the global existence result of the small-amplitude global solution in \cite{KL}. In Section \ref{sec5}, we give an appendix on the local-in-time existence of solutions for the completeness of the proof.    

\section{Preliminaries}\label{sec2}
Define the velocity weight function 
\begin{equation} \label{weight}
	w=w(v):=(1+\rho^2\abs{v}^2)^{\beta}e^{\varpi \abs{v}^2},
\end{equation} 
with $\rho>0$, $\varpi >0$ and $\beta>0$. Here, the strict positivity of $\varpi$ will be only used in Lemma \ref{Gamma_new} that is crucial for the iteration procedure we shall introduce in this paper.

\subsection{{Operator $K$}}
Recall the definition of operator $K$ by
\begin{equation*}
	(Kf)(v)= \int_{\R^3} k(v,\eta) f(\eta)\, d\eta, 
\end{equation*} 
where $k(v,\eta)=k(\eta,v)$ is a symmetric integral kernel of $K$. 
\begin{lemma} \label{Lemma_k}
\cite{Guo} There is $C>0$ such that 
\begin{equation} \label{k_esti.0}
	\vert k(v,\eta)\vert \leq C\{ \vert v-\eta \vert + \vert v-\eta \vert^{-1} \} e ^{-\frac{\vert v- \eta \vert^2}{8}}e^{-\frac{1}{8}\frac{\vert \vert v \vert^2 - \vert \eta \vert ^2 \vert^2}{\vert v - \eta \vert^2} },
\end{equation} 
for any $v,\eta \in R^3$ with $v\neq \eta$. Recall $w$ in \eqref{weight}. Let $0< \varpi <\frac{1}{4}$. Then, there exist $0\leq \epsilon(\varpi)<1 $ and $C_{\varpi}>0$ such that for $0\leq \epsilon < \epsilon(\varpi)$, 
\begin{equation} \label{k_esti.1}
	\int_{\R^3} \{ \vert v-\eta \vert + \vert v -\eta \vert ^{-1}  \}e^{-\frac{1-\epsilon}{8} \vert v-\eta \vert^2 -\frac{1-\epsilon}{8}\frac{\vert \vert v \vert^2 -\vert \eta \vert^2 \vert^2 }{\vert v-\eta \vert^2}} \frac{w(v)}{w(\eta)} \, 
	d\eta \leq \frac{C_\varpi}{1+\vert v\vert}, 
\end{equation} 
for any $v\in \R^3$.
\end{lemma}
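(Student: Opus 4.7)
The plan falls into two parts corresponding to the two estimates. For the pointwise kernel bound \eqref{k_esti.0}, I would invoke the classical Grad-type computation underlying \cite{Guo}. Splitting $k = k_2 - k_1$ per \eqref{k_1 and k_2}, the loss kernel $k_1$ is explicit and satisfies a stronger bound like $|v-\eta|^{\kappa}e^{-(|v|^2+|\eta|^2)/4}$, which is trivially dominated by the right-hand side of \eqref{k_esti.0}. For $k_2$, I would apply the Carleman representation, rewriting the gain contribution as an integral over the hyperplane $E_{v\eta} := \{u : (u-\eta)\cdot(v-\eta)=0\}$. The Jacobian produces the singular factor $|v-\eta|^{-1}$, while the factor $|v-u|^\kappa$ with $0\leq\kappa\leq 1$ is absorbed into $|v-\eta| + |v-\eta|^{-1}$. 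Finally, the Maxwellian $e^{-|u|^2/2}$ restricted to $E_{v\eta}$, after completing the square using the conservation identity $|u|^2+|v|^2=|u'|^2+|\eta|^2$, splits into $e^{-|v-\eta|^2/8}\,e^{-||v|^2-|\eta|^2|^2/(8|v-\eta|^2)}$, which is exactly the claimed decay.

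For the weighted integral \eqref{k_esti.1}, the strategy is to absorb $w(v)/w(\eta)$ into the Gaussian factors. Using \eqref{weight}, one has
\begin{equation*}
\frac{w(v)}{w(\eta)} \leq C(1+|v-\eta|)^{2\beta}\,e^{\varpi(|v|^2 - |\eta|^2)}.
\end{equation*}
Setting $A := |v-\eta|$ and $B := (|v|^2-|\eta|^2)/|v-\eta|$, so that $\varpi(|v|^2-|\eta|^2) = \varpi AB$, Young's inequality $|AB| \leq \frac{1}{2}(A^2+B^2)$ yields
\begin{equation*}
-\frac{1-\epsilon}{8}(A^2+B^2) + \varpi AB \;\leq\; -\Bigl(\frac{1-\epsilon}{8} - \frac{\varpi}{2}\Bigr)(A^2+B^2),
\end{equation*}
and the coefficient on the right is strictly positive provided $\varpi < (1-\epsilon)/4$. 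Since $\varpi < 1/4$ by hypothesis, choosing $\epsilon(\varpi) > 0$ sufficiently small so that $\epsilon(\varpi) < 1 - 4\varpi$ secures this for all $0 \leq \epsilon < \epsilon(\varpi)$. The polynomial prefactor $(1+|v-\eta|)^{2\beta}$ is then harmlessly absorbed into a slightly weaker Gaussian.

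It remains to show that
\begin{equation*}
I(v) := \int_{\R^3} \{|v-\eta| + |v-\eta|^{-1}\}\, e^{-c\,|v-\eta|^2 - c\,||v|^2-|\eta|^2|^2/|v-\eta|^2}\, d\eta \;\leq\; \frac{C}{1+|v|}
\end{equation*}
for some $c > 0$. I would substitute $z = \eta - v$, so $|v|^2 - |\eta|^2 = -2v\cdot z - |z|^2$, and use polar coordinates $(r,\theta,\phi)$ in $z$ with $\theta$ the angle to $v$. The second exponent becomes $-c(2|v|\cos\theta + r)^2$, which for $|v|$ large forces a sharp concentration of $\cos\theta$ near $-r/(2|v|)$. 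Splitting the $\eta$-integration into $|z| \lesssim |v|/2$ and $|z| \gtrsim |v|/2$, on the former region the angular integral contributes a factor $\sim 1/|v|$ from Gaussian concentration in $\cos\theta$, while the radial integral converges thanks to $e^{-cr^2}$; on the latter region the factor $e^{-c|z|^2}$ already produces exponential decay in $|v|$. Combining gives $I(v) \lesssim (1+|v|)^{-1}$ as required. The main technical obstacle lies in this last angular concentration argument and in checking that the integrable singularity $|v-\eta|^{-1}$ near $\eta = v$ does not disturb the $|v|^{-1}$ gain at infinity; this is handled by treating the region $|z|\leq 1$ separately, where the Gaussian is benign and the singularity is absolutely integrable in three dimensions.
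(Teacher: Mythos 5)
Your proposal is correct, and it reaches the same threshold by a cleaner computation. The pointwise kernel bound \eqref{k_esti.0} is not proved in the paper (it is cited to \cite{Guo}); your split $k=k_2-k_1$ with the trivial bound on $k_1$ and the Carleman-representation argument for $k_2$ is exactly the classical Grad-style derivation underlying Guo's estimate, so there is nothing new to compare there. For the weighted integral \eqref{k_esti.1}, the paper proves the parallel Lemma \ref{Lemma_nega} (the case $\epsilon=\tfrac12$) by the same initial steps — bounding $w(v)/w(\eta)$ and changing to $u=v-\eta$ — but then expands $||u|^2-2v\cdot u|^2/|u|^2$ and organizes the total exponent as a quadratic form in $\bigl(|u|,\, v\cdot u/|u|\bigr)$, computing its discriminant to find negative definiteness precisely for $\epsilon<1-4\varpi$. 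You instead keep $A=|v-\eta|$ and $B=(|v|^2-|\eta|^2)/|v-\eta|$ intact and apply the symmetric Young inequality to the single cross term $\varpi AB$; because the two Gaussian diagonal coefficients are equal, $|AB|\le\tfrac12(A^2+B^2)$ is sharp and recovers the identical threshold $1-4\varpi$ without expanding anything, which is a genuine simplification. For the final $(1+|v|)^{-1}$ decay, the paper reduces the exponent to $-C_\varpi\bigl(\tfrac12|u|^2+|v\cdot u|\bigr)$ and integrates in $(u_\parallel,u_\perp)$ coordinates, extracting $1/|v|$ from the $u_\parallel$ integral; your polar-coordinate angular-concentration estimate $\int_{-1}^{1}e^{-c(2|v|t+r)^2}\,dt\lesssim 1/|v|$ is the same computation in different variables. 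The only point worth tightening in your write-up is the treatment of $|z|\leq 1$: the $1/|v|$ gain on that region still comes from the angular Gaussian, not from the radial factor, and since the angular estimate is uniform in $r$ this works; but the phrase "the Gaussian is benign" reads as if the decay were coming from elsewhere, which could mislead.
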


\begin{lemma} \label{Lemma_nega}
\cite{DW}
There is $C>0$ such that 
\begin{equation*} \label{k_esti.2}
	\vert k(v,\eta)\vert \leq C\{ \vert v-\eta \vert + \vert v-\eta \vert^{-1} \} e ^{-\frac{\vert v- \eta \vert^2}{8}}e^{-\frac{1}{8}\frac{\vert \vert v \vert^2 - \vert \eta \vert ^2 \vert^2}{\vert v - \eta \vert^2} },
\end{equation*} 
for any $v,\eta \in \R^3$ with $v\neq \eta$. Moreover, let $\, 0<\varpi \leq \frac{1}{64}, \, \alpha \geq 0$. There exists $C_{\beta,\varpi}>0$  such that  
\begin{equation} \label{k_esti.3}
	\int_{\R^3} \{ \vert v-\eta \vert + \vert v -\eta \vert ^{-1}  \}e^{- \frac{\vert v-\eta \vert^2}{16}}e^{ -\frac{\vert \vert v \vert^2 -\vert \eta \vert^2 \vert^2 }{16\vert v-\eta \vert^2}} \frac{w(v)}{w(\eta)} \, d\eta \leq 
	\frac{C_{\beta,\varpi}}{1+\vert v\vert}. 
\end{equation} 
\end{lemma}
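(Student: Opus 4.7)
\textbf{Proof plan for Lemma \ref{Lemma_nega}.} The first pointwise bound on $k(v,\eta)$ is Grad's classical estimate for the linearized collision kernel with hard potentials; it is identical to the one recalled in Lemma \ref{Lemma_k} and I would simply invoke it from \cite{Guo}. All the real work lies in the integral inequality \eqref{k_esti.3}, which differs from \eqref{k_esti.1} in that the Gaussian factors carry the concrete coefficient $\tfrac{1}{16}$ (rather than $\tfrac{1-\epsilon}{8}$ for very small $\epsilon$) and the weight $w$ has exponential part $e^{\varpi|v|^2}$ with $\varpi$ small but of definite size. The point is to show that the restriction $0<\varpi\le\tfrac{1}{64}$ is strong enough to absorb the ratio $w(v)/w(\eta)$ against the Gaussian factor with coefficient $\tfrac{1}{16}$ and still leave a residual exponential decay on which one can run a standard calculation.

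My first step is to split the weight ratio as
\[
\frac{w(v)}{w(\eta)}=\left(\frac{1+\rho^{2}|v|^{2}}{1+\rho^{2}|\eta|^{2}}\right)^{\beta}e^{\varpi(|v|^{2}-|\eta|^{2})}.
\]
The polynomial factor is handled by a dichotomy: on the region $|v-\eta|\ge |v|/2$ it is dominated by any fixed Gaussian in $|v-\eta|$, while on the complementary region $|\eta|\ge|v|/2$ so that the polynomial ratio is bounded by a constant depending only on $\beta$. It therefore suffices to absorb the exponential factor $e^{\varpi(|v|^{2}-|\eta|^{2})}$ into the Gaussians. I would complete the square with $a:=|v|^{2}-|\eta|^{2}$ and $b:=|v-\eta|^{2}$:
\[
\varpi a-\frac{a^{2}}{16\,b}=-\frac{1}{16\,b}\bigl(a-8\varpi b\bigr)^{2}+4\varpi^{2}b.
\]
Adding the $-\tfrac{b}{16}$ coming from $e^{-|v-\eta|^{2}/16}$ yields
\[
\varpi a-\frac{a^{2}}{16\,b}-\frac{b}{16}=-\frac{1}{16\,b}\bigl(a-8\varpi b\bigr)^{2}-\Bigl(\frac{1}{16}-4\varpi^{2}\Bigr)b,
\]
and since $\varpi\le\tfrac{1}{64}$ gives $4\varpi^{2}\le\tfrac{1}{1024}\ll\tfrac{1}{16}$, the remaining coefficient of $b$ is a definite positive constant, call it $c_{0}>0$. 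So after multiplying by $w(v)/w(\eta)$ the integrand is controlled pointwise by
\[
\bigl\{|v-\eta|+|v-\eta|^{-1}\bigr\}\,\exp\!\left\{-c_{0}|v-\eta|^{2}-\frac{(|v|^{2}-|\eta|^{2}-8\varpi|v-\eta|^{2})^{2}}{16|v-\eta|^{2}}\right\},
\]
up to a harmless $\beta$-dependent constant from the polynomial ratio.

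The final step is to integrate this in $\eta$ and extract a factor $C/(1+|v|)$. This reduces to exactly the type of Gaussian integral handled in the proof of \eqref{k_esti.1}: one uses cylindrical coordinates around the axis $\mathbb{R}v$, writes $\eta=\eta_{\|}\tfrac{v}{|v|}+\eta_{\perp}$, notes that $|v|^{2}-|\eta|^{2}-8\varpi|v-\eta|^{2}$ becomes an affine function of $\eta_{\|}$ with $|v|$-coefficient of order $|v|$, and then uses the resulting sharp Gaussian in $\eta_{\|}$ to produce the $1/(1+|v|)$ decay, while the $|v-\eta|^{-1}$ singularity is integrable in $\eta_{\perp}$. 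The main technical obstacle, and the only place where the hypothesis $\varpi\le\tfrac{1}{64}$ is really used, is precisely the completion of squares above: one needs the residual $|v-\eta|^{2}$ coefficient to stay strictly negative after absorbing $4\varpi^{2}$, which is what forces the quantitative smallness of $\varpi$. Everything afterwards is a direct adaptation of the computation in Lemma \ref{Lemma_k}, so I would not grind through it.
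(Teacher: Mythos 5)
Your plan is correct and follows essentially the same route as the paper's proof: the paper also cites the pointwise kernel bound from Guo, reduces \eqref{k_esti.3} to showing that the combined exponent (Gaussian factors plus the $e^{\varpi(|v|^2-|\eta|^2)}$ from the weight ratio) is a negative definite quadratic form in $|v-\eta|$ and $(|v|^2-|\eta|^2)/|v-\eta|$, and then integrates in cylindrical coordinates about $\R v$ to extract the $1/(1+|v|)$ decay. The one cosmetic difference is that you complete the square in $a=|v|^2-|\eta|^2$ to exhibit the residual $-(\tfrac{1}{16}-4\varpi^2)|v-\eta|^2$ directly, whereas the paper computes the discriminant $\Delta=(\tfrac{1-\epsilon}{2}+2\varpi)(\tfrac{1-\epsilon}{2}+2\varpi+\epsilon-1)$ at $\epsilon=\tfrac12$ and then bounds the form by $-C_\varpi\{|u|^2/2+|v\cdot u|\}$; both are equivalent ways of verifying the same negative definiteness, and your residual coefficient $\tfrac{1}{16}-4\varpi^2$ correctly makes visible that $\varpi\le\tfrac{1}{64}$ is far more than enough here. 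Your dichotomy $|v-\eta|\ge|v|/2$ versus $|\eta|\ge|v|/2$ for the polynomial factor is a minor variation of the paper's bound $|w(v)/w(\eta)|\le C_{\rho,\beta}(1+|v-\eta|^2)^{|\beta|}e^{-\varpi(|\eta|^2-|v|^2)}$ and produces the same $\rho$-dependence in the final constant that the paper implicitly carries.
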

\begin{proof}
We can see \eqref{k_esti.3} is exactly an inequality in the case of $\epsilon =1/2 $ in \eqref{k_esti.1}. Thus, we will check  the exact form of $\epsilon (\varpi)$. We first notice that for some $C_{\rho,\beta}>0$, 
\begin{equation*}
	\left \vert \frac{w(v)}{w(\eta)}\right \vert \leq C_{\rho,\beta} [1+\abs{v-\eta}^2]^{\abs{\beta}}e^{-\varpi \{ \abs{\eta}^2-\abs{v}^2 \}}.
\end{equation*}
Let $v-\eta = u$ and $\eta =v-u$ in the integral of \eqref{k_esti.1}. We now compute the toatal exponent in \eqref{k_esti.1}. 
\begin{align*}
	-&\frac{1-\epsilon}{8} \abs{u}^2 -\frac{1-\epsilon}{8}\frac{\abs{\abs{u}^2-2v\cdot u}^2}{\abs{u}^2} -\varpi \{\abs{v-u}^2-\abs v^2\}\\
	&=-\frac{1-\epsilon}{4}\abs{u}^2 + \frac{1-\epsilon}{2}v \cdot u -\frac{1-\epsilon}{2} \frac{\abs{v\cdot u}^2}{\abs{u}^2} - \varpi\{\abs{u}^2 - 2 v \cdot u\} \\ 
	&=(-\varpi-\frac{1-\epsilon}{4})\abs{u}^2 +(\frac{1-\epsilon}{2}+2\varpi)v\cdot u -\frac{1-\epsilon}{2}\frac{\{v\cdot u\}^2}{\abs{u}^2}. 
\end{align*}
The discriminant of the above quadratic form of $\abs{u}$ and $\frac{v\cdot u}{\abs{u}}$ is 
\begin{align*}
	\Delta = (\frac{1-\epsilon}{2}+2\varpi)^2 +2(-\varpi-\frac{1-\epsilon}{4})(1-\epsilon)=(\frac{1-\epsilon}{2}+2\varpi)(\frac{1-\epsilon}{2}+2\varpi+\epsilon-1).
\end{align*}
Hence, the quadratic form is negative definite for $0\leq \epsilon <1-4\varpi$. Actually, the exact form of $\epsilon (\varpi)$ is $1-4\varpi$. Since $0\leq \varpi \leq \frac{1}{64}$, we have $\frac{15}{16} \leq \epsilon(\varpi) \leq 1$ and ,for $\epsilon =\frac{1}{2}$, that there is $C_{\varpi} >0$ such that the following quadratic form is negative definite 
\begin{align} \label{n_definite}
	-&\frac{1}{16} \abs{u}^2 -\frac{1}{16}\frac{\abs{\abs{u}^2-2v\cdot u }^2}{\abs{u}^2} - \varpi \{ \abs{u}^2 -2v\cdot u\} \nonumber \\ 
	&\leq - C_{\varpi} \bigg\{ \abs{u}^2+\frac{\abs{v\cdot u}^2}{\abs{u}^2}\bigg\}=- C_{\varpi} \bigg\{ \frac{\abs{u}^2}{2}+\bigg(\frac{\abs{u}^2}{2}+\frac{\abs{v\cdot u}^2}{\abs{u}^2}\bigg)\bigg\} \nonumber \\
	&\leq - C_{\varpi} \bigg\{\frac{\abs{u}^2}{2}+\abs{v\cdot u} \bigg\}. 
\end{align}
For $\abs{v}\geq 1$, we make change of variable $u_{\parallel}=\{u\cdot \frac{v}{\abs{v}}\}\frac{v}{\abs{v}} $, and $u_{\perp}=u-u_{\parallel}$ so that $\abs{v\cdot u}=\abs{v}\abs{u_{\parallel}}$ and $\abs{v-\eta}\geq \abs{u_{\perp}}$. $(1+\abs{u}^2)^{\beta}, \abs{u}(1+\abs{u}^2)^{\beta}$ are firstly absorbed by $e^{\frac{C_{\varpi}}{4} \abs{u}^2}$, and then the integral in \eqref{k_esti.3} is bounded by \eqref{n_definite}:
\begin{align*}
	C_{\beta} & \int_{\R^2} \bigg(\frac{1}{\abs{u_{\perp}}}+ 1\bigg) e^{-\frac{C_{\varpi}}{4} \abs{u}^2} \bigg \{ \int_{-\infty}^{\infty} e^{-C_{\varpi}\abs {v} \times \abs{u_{\parallel}}} \, d \abs{u_{\parallel}} \bigg \} \, du_{\perp} \\ 
	&\leq \frac{C}{\abs{v}} \int _{\R^2}\bigg(\frac{1}{\abs{u_{\perp}}}+1 \bigg) e^{-\frac{C_{\varpi}}{4} \abs{u_{\perp}}^2}\bigg \{ \int _{-\infty} ^{\infty} e^{-C_{\varpi} \abs {y} } \, dy \bigg \} \, du_{\perp} \quad 
	(y=\abs {v} \times \abs{u_{\parallel}}). 
\end{align*}  
Since both integrals are finite, we can deduce our lemma.
\end{proof}

\subsection{{Pointwise estimate on nonlinear term}}
The below Lemma \ref{Gamma+} implies that one can bound in the pointwise sense the $w$-weighted gain term by the product of the $w$-weighted $L^{\infty}$ norm and the weighted $L^2$ norm with another fixed velocity weight. In $C^3$ convex bounded domain, from the idea of \cite{KL}, we need triple velocity integration to perform a change of variable. However, unlike small data problem, one more iteration generates large factor $L^\infty$ norm as the estimate in Lemma \ref{Gamma+} and then we cannot apply the Gronwall type inequality. Through Lemma \ref{Gamma_new}, the large factor $L^\infty$ norm is no longer generated. These estimates play a crucial role in treating non-linear term whenever the solution could have a large amplitude in $C^3$ convex domain.

\begin{lemma} \label{Gamma+} \cite{DW}
There is a generic $C_{\beta}>0$ such that
\begin{equation} \label{gamma_gain 1}
	\abs{w(v)\Gamma_+ (f,f)(v)}\leq \frac{C_{\beta}\norm{wf}_{L^\infty _v}}{1+\abs{v}}\bigg(\int_{\R^3} (1+\abs{\eta})^4 \abs{e^{\varpi \abs{\eta}^2}f(\eta)}^2\, d\eta\bigg)^{\frac{1}{2}},
\end{equation}
for all $v\in \R^3$. In particular, for $\rho \geq 1$ and $\beta \geq 2 $, one has 
\begin{equation} \notag
	\abs{w(v)\Gamma_+(f,f)(v)}\leq \frac{C_{\beta}\norm{wf}_{L^\infty}^2}{1+\abs{v}},
\end{equation}
for all $v\in \R^3$, where the constant $C_{\beta}>0$ is independent of $\rho$ and $\varpi$. 

\end{lemma}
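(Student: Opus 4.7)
The strategy is to exploit the conservation of energy $|v|^2+|u|^2=|v'|^2+|u'|^2$ to redistribute the outer weight $w(v)$ between the post-collisional velocities, extract $\|wf\|_{L^\infty}$ pointwise, and convert the remaining integral into the claimed weighted $L^2$ norm via Cauchy--Schwarz combined with a Carleman-type change of variables.

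Starting from
\[
    w(v)\Gamma_+(f,f)(v)=\int_{\R^3}\int_{\S^2}B(v-u,\omega)\sqrt{\mu(u)}\,w(v)f(u')f(v')\,d\omega du,
\]
the elementary inequalities $(1+\rho^2|v|^2)\leq (1+\rho^2|v'|^2)(1+\rho^2|u'|^2)$ and $e^{\varpi|v|^2}\leq e^{\varpi|v'|^2}e^{\varpi|u'|^2}$, both consequences of $|v|^2\leq |v'|^2+|u'|^2$, give the pointwise weight distribution $w(v)\leq w(v')w(u')$. Using $|w(u')f(u')|\leq \|wf\|_{L^\infty}$ pulls out one $L^\infty$ factor, reducing matters to an estimate for $\int B(v-u,\omega)\sqrt{\mu(u)}\,w(v')|f(v')|\,d\omega du$. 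I would then split $w(v')|f(v')|=[w(v')/\psi(v')]\cdot[\psi(v')|f(v')|]$ with $\psi(v'):=(1+|v'|)^2 e^{\varpi|v'|^2}$, apply Cauchy--Schwarz in $(u,\omega)$, and invoke the Carleman change of variables $(u,\omega)\mapsto\eta=v'$ on the collision manifold (where the Gaussian tail $\sqrt{\mu(u)}$ tames the Jacobian), which turns the $\psi(v')|f(v')|$ factor exactly into $\int (1+|\eta|)^4|e^{\varpi|\eta|^2}f(\eta)|^2\,d\eta$.

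The main obstacle is to bound the remaining kernel integral
\[
    \mathcal{K}(v):=\int_{\R^3}\int_{\S^2} B(v-u,\omega)\sqrt{\mu(u)}\,\frac{(1+\rho^2|v'|^2)^{2\beta}}{(1+|v'|)^{4}}\,d\omega du
\]
by $C_\beta/(1+|v|)^2$. The polynomial weight is first split via $|v'|^2\leq|v|^2+|u|^2$ into $(1+\rho^2|v|^2)^{2\beta}(1+\rho^2|u|^2)^{2\beta}$, and the $u$-dependent piece is absorbed into $\sqrt{\mu(u)}$; the $(1+\rho^2|v|^2)^{2\beta}$ piece is then reserved to combine with the outer $w(v)$ after taking the square root. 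The decisive $1/(1+|v|)$ gain is extracted in analogy with the classical estimate underlying \eqref{k_esti.1}: one works in spherical coordinates adapted to the direction of $v$, so that the transverse components of $u$ integrate freely against the Gaussian, while the longitudinal component produces a factor $1/(1+|v|)$ through the decay $(1+|v'|)^{-4}$ together with the angular cutoff $q_0(\theta)\leq C|\cos\theta|$ enforcing a favourable orientation of $\omega$ relative to $v-u$. Assembling the pieces and taking the square root yields \eqref{gamma_gain 1}. The simplified second inequality finally follows by noting $(1+|\eta|)^2|e^{\varpi|\eta|^2}f(\eta)|\leq \|wf\|_{L^\infty}\,(1+|\eta|)^2/(1+\rho^2|\eta|^2)^{\beta}\leq C_\beta \|wf\|_{L^\infty}$ when $\rho\geq 1$ and $\beta\geq 2$, with the residual mass $\int(1+|\eta|)^2/(1+\rho^2|\eta|^2)^{\beta}\,d\eta$ integrable and uniformly bounded.
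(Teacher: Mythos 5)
The paper does not reproduce a proof of this lemma; it is imported directly from~\cite{DW}, so your attempt has to stand on its own. Your first two steps (the pointwise factorisation $w(v)\le w(v')w(u')$ via energy conservation and extracting one factor of $\|wf\|_{L^\infty}$ from $w(u')f(u')$) are sound, and the Cauchy--Schwarz split against $\psi(v')=(1+|v'|)^2e^{\varpi|v'|^2}$ is the right shape of argument. The problem is concentrated in the bound on the remaining kernel integral $\mathcal{K}(v)$, where the argument as you describe it cannot produce a constant independent of $\rho$ as the lemma asserts.

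After applying $|v'|^2\le |v|^2+|u|^2$ you propose to ``absorb $(1+\rho^2|u|^2)^{2\beta}$ into $\sqrt{\mu(u)}$,'' but the resulting quantity $\int_{\R^3}(1+\rho^2|u|^2)^{2\beta}\sqrt{\mu(u)}\,du$ grows like $\rho^{4\beta}$ as $\rho\to\infty$; there is no uniform absorption, and this would make $C_\beta$ depend badly on $\rho$. The companion claim that ``$(1+\rho^2|v|^2)^{2\beta}$ is reserved to combine with the outer $w(v)$'' is internally inconsistent: by the time $\mathcal{K}(v)$ appears you have already spent the outer $w(v)$ completely via $w(v)\le w(v')w(u')$ in the very first step, so there is no $w(v)$ left to trade against --- and in any case multiplying an excess factor of $(1+\rho^2|v|^2)^{\beta}$ by the weight $w(v)$ would make the mismatch worse, not cancel it. In short, once the Cauchy--Schwarz is set up with the full $w(v')$ in the first factor, the ratio $w(v')/\psi(v')=(1+\rho^2|v'|^2)^\beta/(1+|v'|)^2$ carries an essential $\rho^{2\beta}$-dependence that your manipulations do not eliminate; a proof of the $\rho$-uniform estimate must arrange the weights so that the $(1+\rho^2|\cdot|^2)^{\beta}$ parts cancel in pairs (as in $w(v)/[w(u')w(v')]\le e^{-\varpi|u|^2}$) rather than surviving into the kernel integral.

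A secondary issue: you assert the Carleman change of variables ``turns the $\psi(v')|f(v')|$ factor exactly into $\int(1+|\eta|)^4|e^{\varpi|\eta|^2}f(\eta)|^2d\eta$'' with the Gaussian tail $\sqrt{\mu(u)}$ taming the Jacobian. The Jacobian singularity is at $v'=v$ and is independent of $u$, so $\sqrt{\mu(u)}$ cannot tame it; what mitigates it is the angular cutoff $q_0(\theta)\lesssim|\cos\theta|=|v-v'|/|v-u|$, and even so the resulting kernel behaves like $|v-\eta|^{\kappa-2}$, which for $0\le\kappa\le1$ is not pointwise bounded. So the second Cauchy--Schwarz factor is a weighted $L^2$ quantity with a singular kernel, not literally the claimed $L^2$ norm, and a further (standard but not free) argument is required to conclude.
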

\begin{lemma} \label{Gamma_new} 
Let $0<\varpi\leq \frac{1}{64}$. There is a generic constant $C>0$ such that 
\begin{equation} \label{gamma 2}
	\abs{ w(v) \Gamma_+ (f,f) (v)}  \leq C \int_{\R^3} \abs{\tilde{k}_2(v,\eta)h^2(\eta)} \,d \eta,
\end{equation}
for all $v \in \R^3$, where $\tilde{k}_2$ is given by 
\begin{align} \notag
	\tilde{k}_2(v,\eta) = k'_2(v,\eta) \frac{w(v)}{w(\eta)}, 
\end{align}

\begin{align}  \label{k'_2}
	\abs{k'_2(v,\eta)} \leq \frac{C}{\abs{v-\eta}} e^{-C_{1,\varpi}\abs{v-\eta}^2}e^{-C_{1,\varpi}\frac{\abs{\abs{v}^2-\abs{\eta}^2}^2}{\abs{v-\eta}^2}}.
\end{align}
for some $C_{1,\varpi} > 0$. 
Moreover, there exists $C_{\varpi}>0$ such that 
\begin{align} \label{new kernel}
	\int_{\R^3} \abs{v-\eta}^{-1} e^{-C_1(1-\epsilon)\abs{v-\eta}^2} e^{-C_1(1-\epsilon)\frac{\abs{\abs{v}^2-\abs{\eta}^2}^2}{\abs{v-\eta}^2}} \frac{w(v)}{w(\eta)}
	\; d\eta \leq \frac{C_{\varpi}}{1+\abs{v}},
\end{align}
whenever $0\leq C_1 < C_1(\varpi)$ and $0 \leq \epsilon< \epsilon(C_1,\varpi)$ where $C_1(\varpi)$ and $\epsilon(C_1,\varpi)$ will be determined in the proof. This implies that the new kernel $\tilde{k}_2$ is still integrable and has singularity $\frac{1}{\abs{v-\eta}}$. 
\end{lemma}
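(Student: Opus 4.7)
The plan is to combine a weighted AM--GM bound on the bilinear product $f(u')f(v')$ with a Carleman-type change of variables, and then verify the kernel integrability by a quadratic-form analysis that parallels the proof of Lemma~\ref{Lemma_nega}.

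Starting from $\Gamma_+(f,f)(v) = \int_{\R^3\times\S^2} B(v-u,\omega) \sqrt{\mu(u)}\, f(u')f(v')\, du\, d\omega$ and writing $f = h/w$, I apply the weighted AM--GM inequality
\begin{equation*}
|f(u')f(v')| = \frac{|h(u')h(v')|}{w(u')w(v')} \;\leq\; \frac{h^2(u')+h^2(v')}{2\,w(u')w(v')},
\end{equation*}
which reduces the task to bounding $\int B\sqrt{\mu(u)}\, h^2(\eta')/[w(u')w(v')] \, du \, d\omega$ for $\eta' \in \{u', v'\}$. By the symmetry of the collision it suffices to treat the case $\eta' = v'$; the $u'$ piece is analogous. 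For this term, I carry out the Carleman-type change of variables $(u,\omega) \to (u_\perp, v')$ with $\omega = (v-v')/|v-v'|$ and $u_\perp$ orthogonal to $\omega$, so that $u = v' + u_\perp$, $u' = v - u_\perp$, and $du\, d\omega = du_\perp\, dv'/|v-v'|^2$. Using $q_0(\theta) \leq C|\cos\theta|$ together with $\cos\theta = |v-v'|/|v-u|$, one obtains $B \leq C|v-v'|^\kappa$ uniformly in $u_\perp$.

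The decisive step is to retain \emph{both} weight factors $1/w(u')$ and $1/w(v')$: since $1/w(u') \leq C e^{-\varpi|u'|^2}$, combined with $\sqrt{\mu(u)} = C e^{-|u|^2/4}$ and the decompositions
\begin{equation*}
|u|^2 = |v'_\parallel|^2 + |v'_\perp + u_\perp|^2, \qquad |u'|^2 = |v_\parallel|^2 + |v'_\perp - u_\perp|^2,
\end{equation*}
the Gaussian integration in $u_\perp$ reduces to a shifted Gaussian whose value is an exponential in $|v_\parallel|$, $|v'_\parallel|$, and $|v'_\perp|$. Substituting the algebraic identities
\begin{equation*}
|v_\parallel|^2 = \frac{(|v|^2-|v'|^2 + |v-v'|^2)^2}{4|v-v'|^2}, \qquad |v'_\parallel|^2 = \frac{(|v|^2-|v'|^2 - |v-v'|^2)^2}{4|v-v'|^2},
\end{equation*}
and extracting the weight ratio $w(v)/w(v')$ from $w(v)/[w(u')w(v')^2]$, the remaining prefactor defines $k'_2(v, v')$. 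A careful accounting of the resulting coefficients of $|v-v'|^2$, $||v|^2-|v'|^2|^2/|v-v'|^2$, and $|v|^2$ in the total exponent, together with the restriction $\varpi \leq 1/64$, then yields the $k_2$-type pointwise bound \eqref{k'_2}.

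For the integrability estimate \eqref{new kernel}, the strategy mirrors the proof of Lemma~\ref{Lemma_nega}: after the change of variables $u = v-\eta$, the total exponent
\begin{equation*}
-C_1(1-\epsilon)|u|^2 - C_1(1-\epsilon)\frac{(|u|^2 - 2v\cdot u)^2}{|u|^2} - \varpi(|\eta|^2 - |v|^2)
\end{equation*}
is arranged as a quadratic form in $|u|$ and $(v\cdot u)/|u|$, and its negative-definite range in $(C_1,\epsilon)$ is determined by the discriminant condition exactly as in Lemma~\ref{Lemma_nega}. The factor $1/(1+|v|)$ then emerges from decomposing $u$ into components parallel and perpendicular to $v$ and integrating out the parallel direction, which yields $1/|v|$ for $|v| \geq 1$. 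The main obstacle is the pointwise $k_2$-type decay on $k'_2$: in the linearized theory the analogous kernel arises cleanly because an extra $\sqrt{\mu}$ factor (as in $\Gamma_+(\sqrt{\mu},f)$) combines with $1/\sqrt{\mu(v)}$ to produce the $||v|^2-|v'|^2|^2/|v-v'|^2$ decay after Carleman. Here, the required additional Gaussian must be manufactured from the interplay between the single $\sqrt{\mu(u)}$ factor of $\Gamma_+(f,f)$ and the exponential part $e^{\varpi|v|^2}$ of the weight $w$, via the supplementary decay $e^{-\varpi|u'|^2}$ from $1/w(u')$; ensuring that the cross-terms in the combined Gaussian remain negative definite is precisely what forces the quantitative restriction $\varpi \leq 1/64$.
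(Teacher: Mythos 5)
Your proposal uses the same AM--GM splitting and Carleman-kernel argument as the paper; the paper simply abbreviates your explicit Carleman computation by observing that after the AM--GM step the expression is exactly $w(v)\big[\Gamma_+(w^{-1}, h^2/w) + \Gamma_+(h^2/w, w^{-1})\big]$, i.e., a gain term with the Gaussian-type weight $w^{-1}$ in place of $\sqrt{\mu}$, so the standard kernel bound (citing Glassey) applies directly --- your re-derivation spells out exactly why that citation works and why $\varpi>0$ is needed. Both then obtain \eqref{new kernel} by the same change of variables and quadratic-form/discriminant analysis as in Lemma~\ref{Lemma_nega}.
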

\begin{proof}
By the definition of the gamma gain term in \eqref{k_1 and k_2}, we get 
\begin{align*}
	\left \vert w(v)\Gamma_+(f,f)(v) \right \vert &=\left \vert w(v)  \int_{\R^3} \int_{\S^2} B(v-u,\omega) \sqrt{\mu(u)} f(u')f(v') \,d\omega du \right \vert \\ 
	&=  \left \vert w(v)\int_{\R^3} \int_{\S^2} B(v-u,\omega) \sqrt{\mu(u)} w^{-1}(u') w^{-1}(v') h(u')h(v') \,d\omega du \right \vert \\ 
	&\leq w(v) \int_{\R^3} \int_{\S^2} B(v-u,\omega) \sqrt{\mu(u)} w^{-1}(u')w^{-1}(v') h^2(u') \,d\omega du \\ 
	&\quad +w(v) \int_{\R^3} \int_{\S^2} B(v-u,\omega) \sqrt{\mu(u)} w^{-1}(u')w^{-1}(v') h^2(v') \,d\omega du \\
	&=w(v) \Gamma_+ \left(w^{-1}, \frac{h^2}{w}\right) + w(v)\Gamma_+\left(\frac{h^2}{w}, w^{-1}\right) \\ 
	&\leq Cw(v)\int_{\R^3 } \left \vert k'_2(v,\eta)  \frac{h^2(\eta)}{w(\eta)} \right \vert d\eta.
\end{align*}
where it is well-known that $k'_2(v,\eta)$ satisfies (\ref{k'_2}) when $\varpi > 0$, cf.~\cite{Glassey}.

Since $w^{-1}$ has a similar role as $\sqrt{\mu}$  in \eqref{k_1 and k_2}, the last inequality above can be deduced. Note that $\varpi$ must be strictly positive for obtaining the last inequality. Defining
\begin{equation*}
\tilde{k}_2(v,\eta):=k'_2(v,\eta)\frac{w(v)}{w(\eta)},
\end{equation*} and then \eqref{gamma 2} follows. 
Therefore, from \eqref{k'_2}, there are $C>0$ and $C'_{\varpi}>0$ such that  
\begin{align} \label{new kernel_0}
	\abs{k'_2(v,\eta)}\leq C\abs{v-\eta}^{-1} e^{-C'_{\varpi}\abs{v-\eta}^2} e^{-C'_{\varpi} \frac{\abs{\abs{v}^2-\abs{\eta}^2}^2}{\abs{v-\eta}^2}},
\end{align}
for any $v,\eta \in \R^3$ with $v \neq \eta$. Similar to \eqref{k_esti.1}, we will find $C_1(\varpi)>0,\epsilon(C_1,\varpi)>0$ and $C_\varpi>0$ such that for $0 \leq C_1 <C_1(\varpi)$ and $0\leq \epsilon <\epsilon(C_1,\varpi)$, 
\begin{align} \label{new kernel_1}
	\int_{\R^3} \abs{v-\eta}^{-1}e^{-C_1(1-\epsilon)\abs{v-\eta}^2} e^{-C_1(1-\epsilon)\frac{\abs{\abs{v}^2-\abs{\eta}^2}^2}{\abs{v-\eta}^2}} \frac{w(v)}{w(\eta)}\; d\eta
	 \leq \frac{C_{\varpi}}{1+\abs{v}}.
\end{align} 
Similar to proof in Lemma 2.2, we firstly change the variable $u=v-\eta$, and then compute the total exponent in \eqref{new kernel_1}:
\begin{align*}
(-\varpi-2C_1(1-\epsilon))\abs{u}^2 +(4C_1(1-\epsilon)+2\varpi) v\cdot u -4C_1(1-\epsilon) \frac{\abs{v\cdot u}^2}{\abs{u}^2}.
\end{align*}
The discriminant of the above quadratic form of $\abs{u}$ and $\frac{v\cdot u }{\abs{u}}$ is 
\begin{align*}
	 \Delta= (4C_1(1-\epsilon)+2\varpi)((1-\epsilon)(4C_1-1)+2\varpi). 
\end{align*}
Thus, the quadratic form is negative definite for $0 \leq C_1< \frac{1-2\varpi}{4}$ and $0\leq \epsilon < \frac{1-4C_1-2\varpi}{1-4C_1}$. Taking $C_1(\varpi)=\min\{C'_{\varpi},\frac{1-2\varpi}{4}\}$ and $\epsilon(C_1,\varpi) = \frac{1-4C_1-2\varpi}{1-4C_1}$, we can deduce \eqref{new kernel} by the same argument in the proof of Lemma \ref{Lemma_nega}. This ends the proof of Lemma \ref{Gamma_new}. 
\end{proof}

\subsection{{Relative entropy}}
Recall again the definition of the relative entropy \eqref{Relative entropy} by
\begin{equation*}
\mathcal{E}(F) = \int_{\O}\int_{\R^3} \left(\frac{F}{\mu} \ln \frac{F}{\mu} - \frac{F}{\mu}+1 \right) \mu \, dvdx.
\end{equation*}
The below lemma is the global in time a priori estimate related with the relative entropy, cf.~\cite{CIP}. 

\begin{lemma} \label{decay_entropy}
	[Global in time a priori estimate] {It holds that}
	\begin{equation} \label{lem.dep1}
	\mathcal{E}({F(t)}) \leq \mathcal{E}(F_0), 
	\end{equation}
	for any $t\geq0$, where $F$ satisfies the Boltzmann equation \eqref{def.be} and the boundary condition \eqref{specular}. 
\end{lemma}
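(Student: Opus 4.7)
The plan is to differentiate $\mathcal{E}(F(t))$ in time, split the result into a transport contribution and a collisional contribution, and then show that each of them is nonpositive along the flow \eqref{def.be}.

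Since $\mu$ is $t$-independent and the integrand of $\mathcal{E}$ is $\mu\,\Phi(F/\mu)$ with $\Phi(a)=a\ln a-a+1$ (so that $\Phi'(a)=\ln a$), a formal differentiation gives
\[
\frac{d}{dt}\mathcal{E}(F(t))=\int_\Omega\int_{\R^3} \partial_t F\,\ln\frac{F}{\mu}\,dvdx.
\]
Substituting the equation $\partial_t F=-v\cdot\nabla_x F+Q(F,F)$ yields two pieces. First I would treat the transport piece by the identity
\[
(v\cdot\nabla_x F)\,\ln\frac{F}{\mu}=v\cdot\nabla_x\!\left(F\ln\frac{F}{\mu}-F\right),
\]
(using that $\mu$ depends only on $v$, not on $x$), and then apply the divergence theorem in $x$ for each fixed $v$ to rewrite it as a boundary integral on $\partial\Omega\times\R^3$ weighted by $v\cdot n(x)$.

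The key step is to verify that the boundary integral vanishes under the specular reflection condition \eqref{specular}. For each fixed $x\in\partial\Omega$, I would split the $v$-integration into $v\cdot n(x)>0$ and $v\cdot n(x)<0$, then perform the change of variables $v\mapsto R_x v$ in the incoming part: $R_x$ is an isometry with $|\det|=1$, preserves $|v|$ (hence $\mu$), and flips the sign of $v\cdot n(x)$, while \eqref{specular} gives $F(t,x,R_xv)=F(t,x,v)$; so the incoming half exactly cancels the outgoing half, annihilating the transport contribution (the singular set $\gamma_0$ has measure zero and is harmless). For the collisional piece, I would invoke the standard weak formulation
\[
\int_{\R^3}Q(F,F)\psi\,dv=\frac{1}{4}\int_{\R^3\times\R^3\times\S^2} B(v-u,\omega)\,[F'F'_*-FF_*]\,[\psi+\psi_*-\psi'-\psi'_*]\,d\omega du dv,
\]
obtained by the usual symmetrization under $u\leftrightarrow v$ and the involution $(u,v)\leftrightarrow(u',v')$. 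Taking $\psi=\ln(F/\mu)$ and using $\ln\mu+\ln\mu_*=\ln\mu'+\ln\mu'_*$ (from $|u|^2+|v|^2=|u'|^2+|v'|^2$), the $\mu$ part drops out and one is left with the H--functional
\[
-\frac{1}{4}\int B\,[F'F'_*-FF_*]\ln\frac{F'F'_*}{FF_*}\,d\omega dudv\le 0,
\]
thanks to the elementary inequality $(a-b)\ln(a/b)\ge 0$.

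Combining the two pieces gives $\frac{d}{dt}\mathcal{E}(F(t))\le 0$, and integration in time yields \eqref{lem.dep1}. The main obstacle, as with any H--theorem at the level of weak/renormalized solutions, is justifying the formal differentiation and manipulations above: one should either work with an approximation scheme for which $F$ and $\ln F$ are smooth enough to apply the divergence theorem and the quadratic Boltzmann identity, or invoke the renormalization framework of DiPerna--Lions--Hamdache in which $\mathcal{E}$ is known to be nonincreasing. In the setting of this paper, however, the construction produces mild solutions with $F=\mu+\sqrt\mu f$ and $wf\in L^\infty$, so $F$ is bounded above and the only delicate point for the rigorous computation is a lower bound away from zero; this can be handled on regularized iterates (whose relative entropy inequality is stable under passage to the limit), which is the standard route I would follow to make the argument rigorous.
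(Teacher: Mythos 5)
Your proposal takes essentially the same route as the paper: differentiate the relative entropy, observe that the transport contribution can be written in divergence form so that integration over $\Omega$ leaves only a boundary flux (which vanishes under specular reflection via the change of variables $v\mapsto R_x v$), and bound the collision contribution by the H--theorem inequality after noting that $\ln\mu$ is a collision invariant. The paper reaches the same cancellations by writing the equation directly as $\partial_t[\mu\Psi(F/\mu)]+\nabla_x\cdot[v\mu\Psi(F/\mu)]=Q(F,F)\ln(F/\mu)$ with $\Psi(s)=s\ln s-s+1$; your derivation of the collisional sign via the symmetrized weak form and your remark about rigor on regularized iterates are just a more explicit rendering of the same argument.
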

\begin{proof}
	Define a function
	\begin{align}\label{def.Psi}
	\Psi(s)=s\ln s -s +1 
	\end{align} 
	for $s>0$. Then, $\Psi$ is  a nonnegative and convex function on $(0,\infty)$ with 
	$\Psi'(s)=\ln s$. 
	From \eqref{def.be}, one can deduce that 
	\begin{align*}
	\partial_t [\mu \Psi\left(\frac{F}{\mu}\right) ]+ \nabla_x \cdot [ v\mu \Psi\left(\frac{F}{\mu}\right)] = Q(F,F) \ln \frac{F}{\mu}.
	\end{align*}
	Taking integration for $v\in \R^3$ and then for $x$ in $\O$, we get 
	\begin{align}\label{lem.dep2}
	\frac{d}{dt} \int _{\O} \int_{\R^3} \Psi\left(\frac{F}{\mu}\right) \mu \, dvdx + \int_{\partial \O} \int_{\R^3} 
	\Psi\left(\frac{\mu}{F}\right) \mu v \cdot n(x) \, dv dS_x = \int _{\O} \int_{\R^3} Q(F,F) \ln F \, dvdx.   
	\end{align}
	For $x \in \partial \O$, let us consider 
	\begin{align*}
	I_x := \int_{\R^3} \Psi\left(\frac{F}{\mu}\right) \mu v\cdot n(x) \, dv. 
	\end{align*}
	We can write 
	\begin{align*}
	I_x= \int_{v\cdot n(x)>0} \Psi\left(\frac{\gamma_{+} F}{\mu}\right) \, d\sigma_x - \int _{v\cdot n(x) <0} 
	\Psi\left(\frac{\gamma_{-}F}{\mu}\right) \, d\sigma_x, 
	\end{align*}
	where $d\sigma_x=d\sigma_x(v):=\mu(v)\abs{v\cdot n(x)}\,dv$. Applying the specular boundary condition \eqref{specular} to the negative part, 
	\begin{align*}
	I_x = \int_{v\cdot n(x)>0} \Psi\left(\frac{\gamma_{+} F}{\mu}\right) \, d\sigma_x - \int _{v\cdot n(x) <0} 
	\Psi\left(\frac{L_{\gamma_+}F}{\mu}\right) \, d\sigma_x, 
	\end{align*}
	where $L_{\gamma_+} F =F(R_x v)$. Since the negative part is the same as the positive part by taking change of variable, one has $I_x=0$ for $x\in \partial \O$. Further by the fact that $\int Q(F,F)F\,dv\leq 0$, it follows from \eqref{lem.dep2} that 
	\begin{align*}
	\frac{d}{dt} \int_{\O} \int_{\R^3} \Psi\left(\frac{F}{\mu}\right) \mu \, dvdx \leq 0,
	\end{align*}
	which proves \eqref{lem.dep1} after taking integration in time. 
	This ends the proof of Lemma \ref{decay_entropy}.
\end{proof}

Next, the following lemma shows that the relative entropy is used to control $L^1$ norm and $L^2$ norm of $F-\mu$ in the different regions, cf.~\cite{Guo-QAM}.

\begin{lemma} \label{entropy_est}
	[Control $L^1$ and $L^2$ by relative entropy]
	{It holds that} 
	\begin{equation}\label{lem.12re}
	{\int_{\O}\int_{\R^3}} \frac{1}{4\mu} \abs{F-\mu}^2 \cdot \mathbf{1} _{\abs{F-\mu}\leq \mu} \,{dvdx} + {\int_{\O}\int_{\R^3}} \frac{1}{4} \abs{F-\mu} \cdot \mathbf{1}_{\abs{F-\mu}>\mu} \, {dvdx} \leq \mathcal{E}(F_0), 
	\end{equation} 
	for any $t\geq0$, where $F$ satisfies the Boltzmann equation \eqref{def.be} and the boundary condition \eqref{specular}. Furthermore, if we write $F$ in terms of the standard perturbation $f$, then 
	\begin{equation} \label{lem.12repf}
	{\int_{\O}\int_{\R^3}} \frac{1}{4}\abs{f}^2 \cdot \mathbf{1}_{\abs{f} \leq \sqrt{\mu}} \, {dvdx} + {\int_{\O}\int_{\R^3}} \frac{\sqrt{\mu}}{4} \abs{f} \cdot \mathbf{1}_{\abs{f}>\sqrt{\mu}} \,{dvdx} \leq \mathcal{E}(F_0). 
	\end{equation} 
\end{lemma}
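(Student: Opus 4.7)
The plan is to reduce everything to a pointwise inequality on the scalar function $\Psi(s)=s\ln s - s+1$ introduced in the proof of Lemma \ref{decay_entropy}, and then to combine it with the monotonicity estimate $\mathcal{E}(F(t))\leq \mathcal{E}(F_0)$ from \eqref{lem.dep1}. Concretely, setting $a:=F/\mu$, the statement we want is a pointwise lower bound of the form
\begin{equation*}
\Psi(a)\geq \tfrac{1}{4}(a-1)^{2}\mathbf{1}_{|a-1|\leq 1}+\tfrac{1}{4}|a-1|\,\mathbf{1}_{|a-1|> 1},\qquad a\geq 0.
\end{equation*}
Multiplying this by $\mu$ and integrating over $\Omega\times\R^{3}$ turns the two terms on the right into exactly the quantities appearing on the left-hand side of \eqref{lem.12re}, since $\mu(a-1)^{2}=|F-\mu|^{2}/\mu$, $\mu|a-1|=|F-\mu|$, and the indicator sets $\{|a-1|\leq 1\}$ and $\{|a-1|>1\}$ coincide with $\{|F-\mu|\leq \mu\}$ and $\{|F-\mu|>\mu\}$ respectively.

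To verify the pointwise inequality, I would split at $a=2$ (equivalently $|a-1|=1$). On the near-equilibrium region $0\leq a\leq 2$, I would use $\Psi(1)=0$, $\Psi'(1)=0$, and $\Psi''(s)=1/s\geq 1/2$ on $[0,2]$, so that a Taylor expansion with integral remainder gives $\Psi(a)\geq \tfrac{1}{4}(a-1)^{2}$. On the far region $a>2$, I would use monotonicity of $\Psi'(s)=\ln s$ together with the comparison $\ln s \geq \tfrac{1}{2}+\tfrac{1}{4}$-type bounds: more concretely, one writes $\Psi(a)=\Psi(2)+\int_{2}^{a}\ln s\,ds$, notices that $\int_{2}^{a}\ln s\,ds\geq (\ln 2)(a-2)\geq \tfrac{1}{2}(a-2)$, and adds the small positive contribution $\Psi(2)=2\ln 2 -1$, which easily suffices to deliver $\Psi(a)\geq \tfrac{1}{4}(a-1)$ for all $a\geq 2$. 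This elementary step is the only real content; the rest of the argument is bookkeeping.

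With the pointwise inequality in hand, the first assertion \eqref{lem.12re} follows by integrating against $\mu\,dvdx$ and invoking Lemma \ref{decay_entropy}. For the $f$-form \eqref{lem.12repf}, I substitute $F=\mu+\sqrt{\mu}f$: then $F-\mu=\sqrt{\mu}f$, so $\{|F-\mu|\leq\mu\}=\{|f|\leq\sqrt{\mu}\}$, and the two integrands transform as
\begin{equation*}
\tfrac{1}{4\mu}|F-\mu|^{2}=\tfrac{1}{4}|f|^{2},\qquad \tfrac{1}{4}|F-\mu|=\tfrac{\sqrt{\mu}}{4}|f|,
\end{equation*}
so \eqref{lem.12repf} is a direct rewriting of \eqref{lem.12re}.

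The only subtle point, and the part I would check most carefully, is the constant $\tfrac{1}{4}$ in both regimes simultaneously; choosing a common constant that is valid both through the quadratic Taylor bound for $a\in[0,2]$ and through the linear bound for $a\geq 2$ is what fixes the factor, and one should make sure the two regime-bounds are compatible at the matching point $a=2$ (indeed $\tfrac{1}{4}(a-1)^{2}=\tfrac{1}{4}$ and $\tfrac{1}{4}|a-1|=\tfrac{1}{4}$ there, so the split is consistent). No global regularity of $F$ or further properties of the collision operator beyond what is already used in Lemma \ref{decay_entropy} are needed, so once the scalar inequality is established the lemma follows immediately.
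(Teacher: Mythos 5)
Your proposal is correct and takes essentially the same route as the paper: both reduce the lemma to the pointwise bound $\Psi(F/\mu)\mu \geq \tfrac14 |F-\mu|^2/\mu$ on $\{|F-\mu|\leq\mu\}$ and $\geq \tfrac14|F-\mu|$ on $\{|F-\mu|>\mu\}$, established by Taylor-expanding the entropy integrand and splitting at $F=2\mu$, then invoke the monotonicity $\mathcal{E}(F(t))\leq\mathcal{E}(F_0)$ from Lemma~\ref{decay_entropy}. The paper phrases the Taylor step via the Lagrange remainder $\tfrac{1}{2\tilde F}|F-\mu|^2$ with an intermediate $\tilde F$ and then bounds $1/\tilde F$ in each regime, whereas you Taylor-expand the scalar $\Psi$ directly with integral remainder and estimate $\int_2^a\ln s\,ds$ in the far regime; this is a cosmetic difference, and your numerical checks (e.g.\ $\Psi(2)=2\ln 2-1>\tfrac14$, $\ln 2>\tfrac12$) are accurate.
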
 
\begin{proof}
	It is noticed that 
	\begin{align*}
	F\ln F - \mu \ln \mu = (1+\ln \mu) (F-\mu) + \frac{1}{2\tilde{F}} \abs{F-\mu}^2,
	\end{align*}
	where $\tilde{F}$ is between $F$ and $\mu$ form Taylor expansion. Then, we compute 
	\begin{align*}
	\frac{1}{2\tilde{F}} \abs{F-\mu}^2 &= F\ln F -\mu \ln \mu - (1+\ln \mu) (F-\mu) = \Psi\left(\frac{F}{\mu}\right) \mu,
	\end{align*}
	where $\Psi$ is given in \eqref{def.Psi}.
	Thus, 
	\begin{align*}
	\int_{\O}\int_{\R^3} \frac{1}{2\tilde{F}} \abs{F-\mu}^2 \, dvdx = \int_{\O}\int_{\R^3} \Psi(\frac{F}{\mu}) \mu
	\, dvdx, 
	\end{align*}
	which is uniformly in time bounded in terms of Lemma 3.1. For the left-hand side, we write 
	\begin{align*}
	1= \textbf{1}_{\abs{F-\mu} \leq \mu} + \textbf{1}_{\abs{F-\mu} > \mu}.
	\end{align*}
	Over $\{ \abs {F-\mu} > \mu\}$, we have $F > 2\mu$ and hence 
	\begin{align*}
	\frac{\abs{F-\mu}}{\tilde{F}} = \frac{F-\mu}{\tilde{F}} \geq \frac{F-\frac{1}{2}F}{F} =\frac{1}{2}.
	\end{align*}
	Over $\{\abs{F-\mu} \leq \mu\}$, we have $0 \leq F \leq 2\mu$ and hence 
	$\frac{1}{\tilde {F}} \geq \frac{1}{2\mu}$.
	Therefore, we obtain 
	\begin{multline*}
	\int_{\O} \int_{\R^3} \frac{1}{4\mu} \abs{F-\mu}^2 \cdot \textbf{1}_{\abs{F-\mu} \leq \mu} \, dvdx 
	+ \int_{\O}\int_{\R^3} \frac{1}{4}\abs{F-\mu}\cdot \textbf{1}_{\abs{F-\mu}>\mu}{\,dvdx} \\
	\leq \int_{\O} \int_{\R^3} \Psi\left(\frac{F}{\mu}\right) \mu \, dvdx \leq \int_{\O} \int_{\R^3} \Psi\left(\frac{F_0}{\mu}\right) \mu \, dvdx =\mathcal{E}(F_0),
	\end{multline*}
	for any $t\geq 0$. This gives the desired estimates \eqref{lem.12re} and \eqref{lem.12repf}, and hence ends the proof of Lemma \ref{entropy_est}.   
\end{proof}

\subsection{{Nondegeneracy of specular trajectory}}
We first recall some notations introduced in \cite{Guo}. Let $\O$ be {a uniformly convex domain}. Given $(t,x,v)$, let $[X_{cl}(s), V_{cl}(s)]=[X_{cl}(s;t,x,v), V_{cl}(s;t,x,v)]$ be the trajectory (position and velocity) of a particle at time $s$ which was at phase space $(t,x,v)$ under specular reflection boundary condition, $V_{cl}(s;t,x,v)=V_{cl}(s;t,x,Rv)$ when $X_{cl}(s;t,x,v)\in\p\O$. By definition, we have 
\begin{equation*}
	\frac{dX_{cl}(s)}{ds}=V(s), \quad \frac{dV_{cl}(s)}{ds}=0,
\end{equation*}
with the initial condition: $[X_{cl}(t;t,x,v),V_{cl}(t;t,x,v)]=[x,v]$. 
\begin{definition}[Backward exit time] For $(x,v)$ with $x\in \bar{\O}$ such that there exists some $\tau >0, x-sv\in \O$ for $0\leq s \leq \tau$, we define $\tb(x,v)>0$ to be the last moment at which the back-time straight line $[X(s;0,x,v),V(s;0,x,v)]$ remains in the interior of $\O$:
\begin{equation*}
	\tb(x,v):=\sup\{\tau>0:x-sv\in \O \textrm{ for all } 0\leq s \leq \tau\}.
\end{equation*}
Clearly, for any $x \in \O$, $\tb(x,v)$ is well-defined for all $v\in \R^3$. If $x \in \partial \O$, $\tb(x,v)$ is well-defined for all $v\cdot n(x)>0$. For any (x,v), we use $\tb(x,v)$ whenever it is well-defined. We have $x-\tb v \in \partial \O$. We also defined 
\begin{equation*}
	\xb(x,v)=x(\tb)=x-\tb v \in \partial \O.
\end{equation*} 
\end{definition}

\hide
\begin{lemma}{\textit{[1]}}(Velocity lemma 1)
Define the kinetic distance along the trajectories $\frac{dX(s)}{ds}=V(s), \, \frac{dV(s)}{ds}=0$ as: 
\begin{equation}
	\alpha(s)\equiv \xi^2(X(s))+[V(s)\cdot \nabla \xi(X(s))]^2 -2\{V(s)\cdot \nabla^2\xi(X(s))\cdot V(s)\} \xi(X(s)).
\end{equation}
Let $X(s) \in \bar{\O}$ for $t_1 \leq s \leq t_2$. Then, there exists constant $C_{\xi} >0$ such that 
\begin{align*}
	e^{C_\xi (\abs{V(t_1)}+1)t_1}\alpha(t_1) &\leq  e^{C_\xi (\abs{V(t_1)}+1)t_2}\alpha(t_2);\\
	e^{-C_\xi (\abs{V(t_1)}+1)t_1}\alpha(t_1)&\geq e^{-C_\xi (\abs{V(t_1)}+1)t_2}\alpha(t_2). 
\end{align*}
\end{lemma}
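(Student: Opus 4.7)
The strategy is the classical Guo-type velocity lemma: differentiate $\alpha(s)$ along the characteristic, bound the derivative pointwise by a multiple of $\alpha(s)$ itself (with the prefactor $C_\xi(|V|+1)$), verify that $\alpha$ is invariant across each specular bounce, and close with Gronwall. The two inequalities in the statement are just the two directions of the resulting exponential estimate.

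First I would work on a maximal open time interval between two consecutive reflections, on which $X(s)$ is in the interior of $\Omega$ and $V(s)\equiv V(t_1)$ is constant. Using $\frac{d}{ds}\xi(X)=V\cdot\nabla\xi$, $\frac{d}{ds}(V\cdot\nabla\xi)=V\cdot\nabla^2\xi\cdot V$, and $\frac{d}{ds}(V\cdot\nabla^2\xi\cdot V)=\nabla^3\xi(X)[V,V,V]$, a direct computation gives the cancellation
\[
\alpha'(s)=2\xi(X)\,(V\cdot\nabla\xi)-2\xi(X)\,\nabla^3\xi(X)[V,V,V].
\]
The key structural observation is that, for $X\in\bar\Omega$ (so $\xi(X)\le 0$), the uniform convexity assumption $V\cdot\nabla^2\xi(X)\cdot V\ge C_\Omega|V|^2$ makes the third summand in the definition of $\alpha$ nonnegative; hence
\[
\xi^2(X)\le \alpha(s),\quad (V\cdot\nabla\xi)^2\le\alpha(s),\quad 2C_\Omega|V|^2\,|\xi(X)|\le\alpha(s).
\]

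Next I would estimate the two pieces of $\alpha'$. For the first piece, Cauchy–Schwarz gives
\[
|2\xi(X)(V\cdot\nabla\xi)|\le 2\sqrt{\alpha(s)}\sqrt{\alpha(s)}=2\alpha(s)\le 2(|V|+1)\alpha(s).
\]
For the second piece, using $\|\nabla^3\xi\|_{L^\infty(\bar\Omega)}<\infty$ (since $\xi\in C^3$ and $\Omega$ is bounded), together with the crucial bound $|V|^2|\xi(X)|\le\alpha(s)/(2C_\Omega)$,
\[
|2\xi(X)\nabla^3\xi(X)[V,V,V]|\le C|V|\cdot(|V|^2|\xi(X)|)\le \tfrac{C}{2C_\Omega}|V|\,\alpha(s)\le C'(|V|+1)\alpha(s).
\]
Combining, I obtain the differential inequality
\[
|\alpha'(s)|\le C_\xi(|V(t_1)|+1)\,\alpha(s)
\]
on each regular interval, with $C_\xi$ depending only on the $C^3$ norm of $\xi$ and on $C_\Omega$.

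The remaining point, which I expect to be the subtle step, is the matching across reflections: at a bounce time $s_*$ we have $\xi(X(s_*))=0$, hence the third term of $\alpha$ vanishes and $\alpha(s_*)=(V(s_*)\cdot\nabla\xi(X(s_*)))^2$. Since $n(x)=\nabla\xi/|\nabla\xi|$, specular reflection $V\mapsto RV=V-2(n\cdot V)n$ flips the sign of $V\cdot\nabla\xi$ and leaves $|V|$ unchanged, so $\alpha$ is continuous across every bounce. Concatenating the Gronwall estimates $\alpha(t_1)e^{-C_\xi(|V(t_1)|+1)(t_2-t_1)}\le\alpha(t_2)\le\alpha(t_1)e^{C_\xi(|V(t_1)|+1)(t_2-t_1)}$ over the finitely (or countably) many regular subintervals and using continuity at reflections yields the two claimed inequalities, after multiplying through by $e^{\pm C_\xi(|V(t_1)|+1)t_1}$ and $e^{\mp C_\xi(|V(t_1)|+1)t_2}$.
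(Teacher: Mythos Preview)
The paper does not give its own proof of this lemma: it is quoted (with the label ``[1]'') as a known result from Guo's work \cite{Guo}, and in fact the entire lemma sits inside a \texttt{\textbackslash hide}/\texttt{\textbackslash unhide} block and does not appear in the compiled paper. So there is no in-paper argument to compare against; I can only assess your sketch on its own terms.

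Your argument is the standard one and is correct. The derivative computation
\[
\alpha'(s)=2\xi(X)\,(V\cdot\nabla\xi)-2\xi(X)\,\nabla^3\xi(X)[V,V,V]
\]
is right (the cross terms $\pm 2(V\cdot\nabla\xi)(V\cdot\nabla^2\xi\cdot V)$ cancel), and your three pointwise bounds $\xi^2\le\alpha$, $(V\cdot\nabla\xi)^2\le\alpha$, $2C_\Omega|V|^2|\xi|\le\alpha$ follow exactly as you say from $\xi\le 0$ in $\bar\Omega$ and uniform convexity. The estimate of the $\nabla^3\xi$ term via $|V|^2|\xi|\le \alpha/(2C_\Omega)$ is the right way to absorb the extra $|V|$, and Gronwall then gives both inequalities. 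One minor remark: as the lemma is stated here the trajectory is a single free-transport segment with $X(s)\in\bar\Omega$ throughout, so your discussion of continuity of $\alpha$ across specular reflections is not strictly needed---but it is correct and is precisely what one uses when extending the estimate to the full billiard flow.
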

\begin{lemma}{\textit{[1]}}(Velocity lemma 2)
Let $(t,x,v)$ be connected with $(t-\tb,\xb, v) $ backward in time through a trajectory of  $\frac{dX(s)}{ds}=V(s), \, \frac{dV(s)}{ds}=0. \\$
Let $x_i \in \partial \O$, for $i=1,2$, and let $(t_1,x_1,v)$ and $(t_2,x_2,v)$ be connected with the trajectory $\frac{dX(s)}{ds}=V(s), \, \frac{dV(s)}{ds}=0$ which lies inside $\bar{\O}$. Then, there exists a constant $C_{\xi}>0$ such that 
\begin{equation*}
	\abs{t_1-t_2} \geq \frac{\abs{n(x_1)\cdot v}}{C_{\xi}\abs{v}^2}.
\end{equation*}
\end{lemma}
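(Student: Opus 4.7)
The plan is to analyze the scalar function $\alpha(s):=\xi(X(s))$ along the free-transport segment $X(s)=x_1+(s-t_1)v$ connecting the two boundary points. Since $\xi\le 0$ in $\bar\O$, $\xi(x_1)=\xi(x_2)=0$, and the segment lies in $\bar\O$ for $s$ between $t_1$ and $t_2$, one has $\alpha(t_1)=\alpha(t_2)=0$ together with $\alpha(s)\le 0$ on the interval in between. The inequality to be proved is trivial if $v=0$ or $n(x_1)\cdot v=0$, so we may assume both are nonzero; without loss of generality also assume $t_1<t_2$.

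First I would differentiate along the (straight) trajectory: $\alpha'(s)=v\cdot\nabla\xi(X(s))$ and $\alpha''(s)=v\cdot\nabla^2\xi(X(s))\cdot v$. Because $\xi\in C^3$ and $\bar\O$ is compact, the Hessian is uniformly bounded, $\|\nabla^2\xi\|_{L^\infty(\bar\O)}\le C_\xi$, and therefore $|\alpha''(s)|\le C_\xi|v|^2$ on the whole interval $[t_1,t_2]$.

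Second I would apply Taylor's formula with integral remainder, expanded at $t_1$ and evaluated at $s=t_2$:
\begin{equation*}
0=\alpha(t_2)=\alpha(t_1)+\alpha'(t_1)(t_2-t_1)+\int_{t_1}^{t_2}(t_2-\tau)\,\alpha''(\tau)\,d\tau.
\end{equation*}
Using $\alpha(t_1)=0$ and the uniform bound on $\alpha''$, this rearranges to
\begin{equation*}
|\alpha'(t_1)|\,|t_2-t_1|\;\le\;\tfrac{1}{2}C_\xi\,|v|^2\,|t_2-t_1|^2,
\end{equation*}
so $|\alpha'(t_1)|\le \tfrac{1}{2}C_\xi|v|^2|t_2-t_1|$. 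Since $n(x_1)=\nabla\xi(x_1)/|\nabla\xi(x_1)|$, we have $\alpha'(t_1)=|\nabla\xi(x_1)|\,(n(x_1)\cdot v)$, and because $\nabla\xi$ is continuous and nowhere zero on the compact boundary $\partial\O$ there is a uniform lower bound $|\nabla\xi|\ge c_\xi>0$ on $\partial\O$. Solving the resulting inequality for $|t_1-t_2|$ gives the claimed bound after absorbing $c_\xi^{-1}$ into the constant $C_\xi$.

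There is no substantial obstacle here: the argument is essentially Taylor's theorem combined with the elementary geometric observation that $\alpha$ vanishes at both endpoints and has the correct sign in between. Uniform convexity of $\O$ plays no role in this particular inequality — it is needed in Velocity Lemma 1 for controlling the full kinetic distance, but the present bound uses only $C^2$ regularity of $\xi$ (to bound $\nabla^2\xi$) and non-degeneracy of $\nabla\xi$ on $\partial\O$. The symmetric estimate with $n(x_2)\cdot v$ on the right-hand side follows by expanding at $t_2$ instead of $t_1$, and the case $t_1>t_2$ is handled identically by relabeling.
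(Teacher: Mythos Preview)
The paper does not give its own proof of this lemma; it is stated (inside a commented-out block) as a citation from \cite{Guo}. Your argument is correct and is in fact the standard one: Taylor-expand $\xi(X(s))$ between the two boundary hits, use that the endpoint values vanish, bound the second derivative uniformly by $C_\xi|v|^2$, and identify $\alpha'(t_1)=|\nabla\xi(x_1)|\,n(x_1)\cdot v$ together with the non-degeneracy $|\nabla\xi|\ge c_\xi>0$ on $\partial\O$. Your observation that uniform convexity plays no role in this particular estimate is also accurate --- only $C^2$ regularity of $\xi$ and the non-degeneracy of $\nabla\xi$ on the boundary are used.
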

\unhide

$\newline$
Fix any point $(t,x,v)\notin \gamma_0 \cap \gamma_-$, and define $(t_0,x_0,v_0)=(t,x,v)$, and for $k\geq 1$ 
\begin{equation} \label{bounce_pt}
	(t_{k+1},x_{k+1},v_{k+1})=(t_k-\tb(t_k,x_k,v_k),x_b(x_k,v_k),R(x_{k+1}v_k),
\end{equation}
where $R(x_{k+1})v_k = v_k - 2(v_k\cdot n(x_{k+1}))n(x_{k+1})$. And we define the specular back-time cycle 
\begin{equation} \notag
	X_{cl}(s) \equiv \sum_{k=1}^{\infty} \textbf{1}_{[t_{k+1},t_k)}(s)\{x_k-(t_k-s)v_k\}, \quad V_{cl}(s)\equiv \sum_{k=1}^{\infty}  \textbf{1}_{[t_{k+1},t_k)}(s)v_k.
\end{equation}
We denote the weighted function 
\begin{equation} \notag
	h(t,x,v)=w(v)f(t,x,v), 
\end{equation}
and study the linearized Boltzmann equation in terms of $h$: 
\begin{equation} \notag
	\partial_t h +v\cdot \nabla_x h +\nu(v)h =K_w h, \quad h(0,x,v)=h_0(x,v)\equiv wf_0, 
\end{equation}
where 
\begin{equation*} \label{kw}
	K_wh(v) = wK\bigg(\frac{h}{w}\bigg)(v) := \int \textbf{k}_{w}(v,u) h(u) \,du.
\end{equation*}
We now fix any point $(t,x,v)$ so that $(x,v) \notin \gamma_0$. Let the back-time specular cycle of $(t,x,v)$ be denoted by $[X_{cl}(s_1),V_{cl}(s_1)]$. We will use twice the Duhamel's principle to derive that
\begin{align}
	h(t,x,v)&= e^{-\nu(v)t}h_0 (X_{cl}(0),V_{cl}(0)) \nonumber \\
	&\quad+\int_0 ^t e^{-\nu(v)(t-s_1)}\int \textbf{k}_w(V_{cl}(s_1),v')h(s_1,X_{cl}(s_1),v')\, dv'ds_1 \nonumber \\
	&= e^{-\nu(v)t}h_0 (X_{cl}(0),V_{cl}(0)) \nonumber \\
	&\quad+\int_0^t e^{-\nu(v)(t-s_1)}\int \textbf{k}_w(V_{cl}(s_1),v')e^{-\nu(v')s_1}h_0(X_{cl}^{'}(0),V_{cl}^{'}(0))\, dv'ds_1 \nonumber \\ 
	&\quad+\int_0^t \int _0 ^{s_1} \int e^{-\nu(v)(t-s_1)-\nu(v')(s_1-s)}\textbf{k}_w(V_{cl}(s_1),v')\textbf{k}_w(V_{cl}'(s),v{''})h(X_{cl}'(s),v'')\, dv'dv{''}dsds_1,\notag
\end{align}
where the back-time specular cycle from $(s_1,X_{cl}(s_1),v')$ is denoted by
\begin{equation} \notag
	X_{cl}' (s)=X_{cl}(s;s_1,X_{cl}(s_1),v'), \quad V_{cl}' (s)=V_{cl}(s;s_1,x_{cl}(s_1),v').
\end{equation}
More explicitly, let $t_k$ and $t_{k'}' $ be the corresponding times for both specular cycles as in \eqref{bounce_pt}. For $t_{k+1}\leq s_1 <t_k, \, t_{k'+1}' \leq s <t_{k'}'$
\begin{equation*}
	X_{cl}'(s)=X_{cl}(s;s_1,X_{cl}(s_1),v')\equiv x_{k'}'-(t_{k'}'-s)v_{k'}',
\end{equation*}
where $x_{k'}'=X_{cl}(t_{k'};s_1,x_k-(t_k-s_1)v_k,v'), \, v_{k'}' =V_{cl}(t_{k'};s_1,x_k-(t_k-s_1)v_k,v').$

Now we recall the following key lemma and proposition for the  nondegeneracy condition from \cite{KL}.  

\begin{lemma} \cite{KL}
	Assume $\O$ is convex. Choose $N \gg 1, \; 0<\delta \ll 1$. There exist collections of open subsets 
	$\{ \mathcal{O}_i\}_{i=1}^{I_{\O,N,\delta}}$ of $\O$ and $\{ \mathcal{V}_i(\textbf{q}_1,\textbf{q}_2) \}_{i=1}^{I_{\O,N,\delta}}$ of 
	$\R^3$, where $\textbf{q}_1$ and $\textbf{q}_2$ are two independent vectors in $\R^3$, with $I_{\O, N, \delta}<\infty$ such that 
	$\bar{\O} \subset  \bigcup_{i} \mathcal{O}_i$ and $\int _{\R^3\backslash \mathcal{V}_i(\textbf{q}_1,\textbf{q}_2)} e^{-\frac{\abs{v}^2}{100}} \; dv 
	\leq O_{\O} (\frac{1}{N})$. Moreover, 
	\begin{align} \label {Lee D .1} 
		\begin{split}
			K_i := \sup \{k \in \mathbb{N} : &t^k(t,x,v)\geq T, \\ 
			& (t,x,v) \in [T,T+1] \times \mathcal{O}_i \times \R^3 \backslash \mathcal{V}_i (\textbf{q}_1,\textbf{q}_2) \} < \infty. 
		\end{split}
	\end{align}
	If $(x,v) \in \mathcal{O}_i \times \R^3\backslash \mathcal{V}_i ( \textbf{q}_1, \textbf{q}_2) $ for some $i$, then 
	\begin{equation} \notag
		\abs{n(x^1(t,x,v)) \cdot v^1(t,x,v) } >  C_{\O,N} \delta, 
	\end{equation}
	and 
	\begin{equation} \notag
		\abs{(\textbf{q}_1  \times \textbf{q}_2 ) \cdot v } \geq \frac{1}{N}. 
	\end{equation}
\end{lemma}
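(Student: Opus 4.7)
The plan is to combine a compactness argument on the position variable with a careful construction of small excluded velocity sets, then exploit the two velocity lemmas (kinetic distance along trajectories and a lower bound on the time between consecutive bounces) to get both the finite-bounce count and the non-grazing property at the first bounce.

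First, I would use compactness of $\bar\Omega$ to produce, for any fixed small diameter $r = r(N,\delta) > 0$, a finite open cover $\{\mathcal{O}_i\}_{i=1}^{I_{\Omega,N,\delta}}$ of $\bar\Omega$ by sets of diameter $\le r$ in which the outward normal $n(x)$ varies by at most $O(r)$ (this uses $\xi\in C^3$). For each $\mathcal{O}_i$ I would choose two independent reference vectors $\mathbf{q}_1,\mathbf{q}_2\in\R^3$ tailored to the local boundary geometry (for instance, a basis of the tangent plane at some representative boundary point). Then define the excluded velocity set as a union of two pieces: a thin slab around the plane $\mathrm{span}(\mathbf{q}_1,\mathbf{q}_2)$,
\[
\{\,v\in\R^3:\ |(\mathbf{q}_1\times\mathbf{q}_2)\cdot v|<\tfrac{1}{N}\,\},
\]
together with a grazing/low-speed region $\{|v|<\delta\}\cup\{|v|>N\}$ plus velocities whose initial $\alpha$-weight (in the sense of the velocity lemma) falls below a threshold $\delta_0=\delta_0(N,\delta)$. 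The Gaussian mass of a slab of width $1/N$ in $\R^3$ is $O(1/N)$, and the other contributions can be made $O(1/N)$ by suitable choice of parameters, giving the claimed measure bound $\int_{\R^3\setminus\mathcal{V}_i}e^{-|v|^2/100}\,dv\le O_\Omega(1/N)$.

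Second, to obtain $|n(x^1(t,x,v))\cdot v^1(t,x,v)|>C_{\Omega,N}\delta$ for $(x,v)\in\mathcal{O}_i\times(\R^3\setminus\mathcal{V}_i)$, I would invoke Guo's velocity lemma applied to the kinetic distance
\[
\alpha(s)=\xi(X(s))^2+(V(s)\cdot\nabla\xi(X(s)))^2-2(V\cdot\nabla^2\xi\cdot V)\,\xi(X(s)),
\]
which propagates multiplicatively under the specular bouncing with constant $e^{\pm C_\xi(|v|+1)s}$. Velocities in $\R^3\setminus\mathcal{V}_i$ have $|v|\le N$ and an initial $\alpha$ bounded below by a quantity $\sim\delta$, so at the first bounce time, when $\xi(X^1)=0$, the remaining summand $|n(x^1)\cdot v^1|^2$ inherits a comparable lower bound, yielding the claim with $C_{\Omega,N}$ depending on $e^{-C_\xi(N+1)(T+1)}$.

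Finally, to bound $K_i$, I would apply the second velocity lemma, which gives $|t_k-t_{k+1}|\ge |n(x_{k+1})\cdot v_{k+1}|/(C_\xi|v|^2)$. Since by the first step the first bounce is non-grazing, uniform convexity of $\Omega$ together with the propagation of $\alpha$ provides a non-grazing lower bound at every subsequent bounce as well (here the condition $|(\mathbf{q}_1\times\mathbf{q}_2)\cdot v|\ge 1/N$ prevents the velocity from being asymptotically parallel to the boundary in the finitely many local charts). Hence consecutive bounces are separated by a uniform positive time depending only on $\Omega, N, \delta$, which forces $K_i<\infty$. The main obstacle, and the real content of Kim--Lee's argument, is to make these uniform non-grazing estimates work in all finitely many charts simultaneously while keeping the excluded velocity set small in Gaussian measure; the delicate point is to balance $r=r(N,\delta)$, the slab width $1/N$, and the $\alpha$-threshold $\delta_0$ so that the bouncing geometry remains quantitatively nondegenerate uniformly in $(t,x,v)\in[T,T+1]\times\mathcal{O}_i\times(\R^3\setminus\mathcal{V}_i)$ and so that the constant $C_{\Omega,N}\delta$ in the grazing bound comes out linear in $\delta$.
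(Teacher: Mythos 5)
The paper only cites this lemma from \cite{KL} without giving its proof, so there is no in-paper argument to compare against. Your high-level framework --- a finite compactness cover of $\bar{\O}$, excluded velocity slabs, and propagation of the kinetic weight $\alpha$ via the velocity lemmas to control grazing and inter-bounce times --- is indeed the Kim--Lee strategy, but there are two concrete gaps in the sketch.

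First, you treat $\mathbf{q}_1,\mathbf{q}_2$ as quantities you choose adapted to the boundary chart (``a basis of the tangent plane at some representative boundary point''), but the lemma must hold for arbitrary \emph{given} independent vectors: in Proposition \ref{prop_COV} it is later applied with $\mathbf{q}_1,\mathbf{q}_2 = \partial_{|u|}X, \partial_{\hat{u}_1}X$, which are trajectory-dependent and unrelated to the local tangent plane. Moreover, the Gaussian mass of the slab $\{|(\mathbf{q}_1\times\mathbf{q}_2)\cdot v|<\frac{1}{N}\}$ is $O(1/N)$ only when $|\mathbf{q}_1\times\mathbf{q}_2|$ is bounded below, which your construction does not supply. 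Second, the velocity lemma propagates $\alpha$ with a factor $e^{\pm C_\xi(|v|+1)(t_2-t_1)}$ whose exponent is the \emph{elapsed time}, not $T+1$. Going backward from $t\in[T,T+1]$ to the first bounce, the elapsed time $\tb$ is only bounded by $\mathrm{diam}(\O)/|v|\lesssim 1/\delta$ after excluding $|v|<\delta$, so the resulting lower bound on $\alpha(t^1)$ carries a factor $e^{-C_\xi(N+1)/\delta}$ that decays faster than any power of $\delta$ and cannot produce the claimed linear bound $C_{\O,N}\delta$. Your statement that the constant depends on $e^{-C_\xi(N+1)(T+1)}$ implicitly assumes $\tb\leq T+1$, which does not follow for slow velocities. (The finiteness of $K_i$ is unaffected, since only bounces inside $[T,T+1]$ are counted there and the propagation is over a unit time interval; the issue is specifically the quantitative first-bounce claim.)
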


The below Proposition implies that the zero set $\bigg\{\det \bigg(\frac{d X_{cl}(s;s_1,X_{cl}(s_1;t,x,v),v')}{dv'} \bigg)=0\bigg\}$ is very small if $\O$ is both $C^3$ and convex.

\begin{proposition} \label{prop_COV}
\cite{KL}
 Fix arbitrary $(t,x,v)\in [T,T+1] \times \O \times \R^3$. Recall $N, \delta, $ and $\mathcal{O}_i,\mathcal{V}_i(\hat{\textbf{e}}_1,\hat{\textbf{e}}_2)$, which are chosen in Lemma 2.8. For each $i=1,2,\dots, I_{\O,N,\delta},$
 there exist $\delta_1>0$ and a $C^1$-function $\psi^{l_0,\vec{l},i,k}$ for $k \leq K_i$ in \eqref{Lee D  .1} where $\psi^{l_0,\vec{l},i,k}$ is defined locally around $(T+\delta_1 l_0, X(T+\delta_1 l_0; t,x,v),\delta_1 \vec{l})$ with 
 	\begin{align*}
 		(l_0,\vec{l}) = (l_0,l_1,l_2,l_3) \in \bigg \{ &0,1, \dots, \bigg \lfloor \frac{1}{\delta_1} \bigg \rfloor+1 \bigg \} \\
		&\times \bigg \{ -\bigg \lfloor \frac{N}{\delta_1} \bigg \rfloor -1, \dots,0,\dots, \bigg \lfloor \frac{N}{\delta_1} \bigg \rfloor +1 \bigg \}^3
 	\end{align*}
and $\Vert \psi^{l_0,\vec{l},i,k} \Vert_{C^1} \leq C_{N,\O,\delta,\delta_1}<\infty$. 

Moreover, if 
\begin{equation}\label{Lee D .2} 
	(X(s;t,x,v),u) \in \mathcal{O}_i \times \R^3 \backslash \mathcal{V}_i (\hat{\textbf{e}}_1, \hat {\textbf{e}}_2)  \quad \textrm{for } i=1,2,\dots, 
	I_{\O,N,\delta}, 
\end{equation}

\begin{equation} \label{Lee D .3} 
	(s,u) \in [T,(l_0-1)\delta_1, T+(l_0+1)\delta_1] \times B(\delta_1 \vec{l}; 2\delta_1), 
\end{equation}

\begin{align} \label{Lee D .4}
	\begin{split}
		s' \in \bigg [ &t^{k+1} (T+\delta_1l_0; X(T+ \delta_1 l_0; t,x,v) ,\delta_1 \vec{l} +\frac{1}{N}, \\ 
		& t^k(T+\delta_1l_0; X(T+ \delta_1 l_0; t,x,v) ,\delta_1 \vec{l})-\frac{1}{N}\bigg], 	
	\end{split}
\end{align}
and 
\begin{align} \label{Lee D .5} 
	\begin{split}
		\bigg \vert s'-\psi^{l_0,\vec{l},i,k} (T+\delta_1l_0, X(T+\delta_1 l_0 ;t,&x,v), \delta_1 \vec{l})\bigg \vert > N^2 (1+\norm{\psi^{l_0,\vec{l},i,k}}_{C^1} ) \delta_1, 
	\end{split}
\end{align}
then
\begin{align}   \notag
	\begin{split}
		\bigg \vert \partial_{\vert u \vert} X(s'; s, X(s;t,x,v) ,u) \times \partial _{\hat{u}_1} X(s'; s,X(s;t,x,&v),u) \bigg \vert >  \epsilon_{\O,N,\delta, \delta_1}.
	\end{split}
\end{align}
Here $\epsilon_{\O,N,\delta, \delta_1} >0 $ does not depend on $T, t, x,$ or $v$ and $\hat{u}_1 =u_1/\abs{u}$.  

For each $j=1,2,\dots,I_{\O,N,\delta}$ in Lemma 2.8, there exists $\delta_2 > 0$ and $C^1$-functions
\begin{equation} \notag
	\psi_1^{l_0,\vec{l},i,k,j,m_0,\vec{m},k'}, \; \psi_2^{l_0,\vec{l},i,k,j,m_0,\vec{m},k'}, \; \psi_3^{l_0,\vec{l},i,k,j,m_0,\vec{m},k'}, 
\end{equation}
for $k' \leq K_j$ in \eqref{Lee D .1} where $\psi_n ^{l_0,\vec{l},i,k,j,m_0,\vec{m},k'}$ is defined locally around 
\begin{equation*}
	(T + \delta_2 m_0; X(T + \delta_2 m_0 l; T + \delta_1 l_0 , X(T + \delta_1l_0;t,x,v) ,\delta_1 \vec{l}), \delta_2 \vec{m}) 
\end{equation*}
for some 
\begin{align*}
	(m_0,\vec{m}) = (m_0,m_1,m_2,m_3) \in \bigg \{ 0&,1,\dots,\bigg \lfloor \frac{1}{\delta_2}\bigg \rfloor +1 \bigg \} \\ 
	 &\times \bigg \{ - \bigg \lfloor \frac{N}{\delta_2} \bigg \rfloor -1, \dots, 0, \dots, \bigg \lfloor \frac{N}{\delta_2} \bigg \rfloor +1 \bigg \}^3 
\end{align*}
with $0< \delta_2 \ll 1$.  

Moreover, if we assume \eqref{Lee D .2}, \eqref{Lee D .3}, \eqref{Lee D .4}, and \eqref{Lee D .5}, 
\begin{align}  \notag
	\begin{split}
		(X(s'; &s, X (s;t,x,v),u),u' ) \in \mathcal{O}_j \times \R^3 \backslash \mathcal{V}_j ( \partial _{\vert u \vert} X, \partial_{\hat{u}_1} X ) \\ 
		& \textrm{for some} \; j=1,2, \dots, I_{\O,N,\delta, } \; \textrm{in Lemma 2.8},
	\end{split}
\end{align}

\begin{align} \label{Lee D .6}
	\begin{split}
		s'' \in \bigg [ &t^{k'+1} (T + \delta_2m_0 ; X (T + \delta_2 m_0 ; T + \delta _1 l_0 , X (T + \delta_1 l_0;t,x,v), \delta_1 \vec{l}), \delta_2 \vec{m}) + \frac{1}{N}, \\ 
		&t^{k'} \underbrace{(T +\delta_2 m_0 ; X (T + \delta_2 m_0 ; T + \delta_1 l_0, X (T + \delta_1 l_0; t,x,v) , \delta_1 \vec{l}) ,\delta _2 \vec{m})}_{(**)} - \frac{1}{N} 
		\bigg ] ,  		
	\end{split}
\end{align}
and 
\begin{align} \notag
	\begin{split}
		\min_{n=1,2,3}  \bigg \vert  s'' - \psi_n & ^{l_0,\vec{l},i,k,j ,  m_0  ,\vec{m},k'} (**)  \bigg \vert >  N ^2 (1+\max_{n=1,2,3} \norm{ \psi_n ^{l_0 , \vec{l} , i,k,j,m_0 , \vec{m}, k' } } _{C^1} ) \delta_2, 
	\end{split}
\end{align}
where {$(**)$} is defined in \eqref{Lee D .6}. Then for each $l_0, \vec{l}, i,k,j,m_0,\vec{m},$ and $k'$ we can choose two distinct variables 
$\{ \xi_1,\xi_2\} \subset \{\vert u \vert, \hat{u}_1 ,\hat{u}_1', \hat{u}_2'\}$ such that $(\vert u' \vert, \xi_1, \xi_2) \mapsto X(s'';s',X(s';s,X(s;t,x,v),u),u')$ is one-to-one 
locally and 
\begin{equation} \notag
	\bigg \vert \det \bigg ( \frac{\partial X (s'';s',X(s';s,X(s;t,x,v),u),u')}{\partial (\vert u' \vert, \xi_1 ,\xi_2)} \bigg) \bigg \vert > \epsilon_{\O, N, \delta, \delta_1,\delta_2}'. 
\end{equation} 
Here  $\epsilon_{\O, N, \delta, \delta_1,\delta_2}'>0$ does not depend on $T,t,x,$ or $v$ and $\hat{u}'_i=u'_i/\abs{u'}$ for each $i=1,2$. 
\end{proposition}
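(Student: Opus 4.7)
The plan is to follow the two-stage strategy of Kim--Lee \cite{KL}: first establish the rank-two nondegeneracy of the two-step map $(|u|,\hat{u}_1)\mapsto X(s';s,X(s;t,x,v),u)$ asserted in the first half of the proposition, and then upgrade to rank-three for the three-step map $(|u'|,\xi_1,\xi_2)\mapsto X(s'';s',X(s';s,X(s;t,x,v),u),u')$ by pushing the rank-two information forward through one additional specular iteration. The covering $\{\mathcal{O}_i,\mathcal{V}_i\}$ of Lemma 2.8 will play two roles throughout: the transversality $|n(x^1)\cdot v^1|>C_{\Omega,N}\delta$ bounds the number of bounces $K_i,K_j$ uniformly, and the wedge bound $|(\mathbf{q}_1\times\mathbf{q}_2)\cdot v|\geq 1/N$ provides the crucial transfer mechanism between the two stages.

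For the first stage I would differentiate the broken characteristic with respect to $|u|$ and $\hat{u}_1$ separately, using the chain rule across each of the at most $K_i$ reflections. On a single free-flight piece the derivatives are explicit, and at each bounce a correction involving $n(x_k)$ appears that, thanks to $|n\cdot v|>C_{\Omega,N}\delta$, preserves linear independence of the two resulting vectors outside a thin sublocus. Discretizing the $s'$-anchor on a lattice of spacing $\delta_1$ indexed by $(l_0,\vec{l},i,k)$, I would realize this sublocus locally as the graph of a $C^1$ function $\psi^{l_0,\vec{l},i,k}$ via the implicit function theorem; the hypothesis \eqref{Lee D .5} then pushes $s'$ an $N^2(1+\|\psi\|_{C^1})\delta_1$-distance away from the graph, which is exactly what yields the quantitative lower bound $\epsilon_{\Omega,N,\delta,\delta_1}$ on $|\partial_{|u|}X\times\partial_{\hat{u}_1}X|$.

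For the second stage I would observe that $\partial_{|u'|}X(s'')$ lies along $\hat{u}'$ and is nondegenerate because \eqref{Lee D .6} keeps $s''$ at distance $1/N$ from both bounce endpoints, then apply Lemma 2.8 a second time with $(\mathbf{q}_1,\mathbf{q}_2)=(\partial_{|u|}X(s'),\partial_{\hat{u}_1}X(s'))$, whose cross product is bounded below by the first stage. This produces $|(\mathbf{q}_1\times\mathbf{q}_2)\cdot u'|\geq 1/N$ outside a small velocity set. Propagating the four candidate vectors $\partial_{|u|}X(s''),\partial_{\hat{u}_1}X(s''),\partial_{\hat{u}'_1}X(s''),\partial_{\hat{u}'_2}X(s'')$ forward through the specular flow from $s'$ to $s''$ and performing a finite wedge-product case analysis then selects a pair $\{\xi_1,\xi_2\}\subset\{|u|,\hat{u}_1,\hat{u}'_1,\hat{u}'_2\}$ whose Jacobian with $\partial_{|u'|}X$ is bounded below; the residual degenerate sublocus is parametrized by the three $C^1$ functions $\psi_n^{l_0,\vec{l},i,k,j,m_0,\vec{m},k'}$, and the quantitative separation $|s''-\psi_n|>N^2(1+\|\psi_n\|_{C^1})\delta_2$ delivers the uniform constant $\epsilon'_{\Omega,N,\delta,\delta_1,\delta_2}$.

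The main obstacle will be propagating $\partial X$ cleanly through multiple specular reflections, because each bounce injects a reflection matrix $I-2n\otimes n$ and boundary-time derivatives $\nabla_u t_b$ that blow up near grazing trajectories, and one must track these factors through up to $K_i$ and $K_j$ segments while keeping the exceptional set describable as a finite union of graphs of controlled $C^1$ functions. The restriction to the non-grazing sets $\R^3\setminus\mathcal{V}_i$ from Lemma 2.8, together with the $C^3$ uniform convexity of $\Omega$ (which controls $\nabla t_b$ via the velocity lemma), is precisely what keeps every intermediate derivative bounded and every implicit-function construction quantitative, yielding constants independent of $T,t,x,v$.
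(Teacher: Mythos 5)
The paper does not actually prove Proposition~\ref{prop_COV}: it is quoted verbatim from Kim--Lee \cite{KL} as a recalled preliminary result, with the citation tag appearing in the proposition statement itself and the text introducing it as ``we recall the following key lemma and proposition for the nondegeneracy condition from \cite{KL}.'' There is therefore no in-paper proof for your sketch to be compared against.

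That said, your outline is a broadly accurate, high-level summary of the strategy that \cite{KL} actually employs, and it is consistent with the informal discussion of the Kim--Lee argument given in Section~1.4 of this paper: the rank-two-then-rank-three structure via triple iteration, the dual role of the covering $\{\mathcal{O}_i,\mathcal{V}_i\}$ from Lemma~2.8 (controlling both the bounce count $K_i,K_j$ and the wedge/transversality bound), the implicit-function parametrization of the degenerate locus by $C^1$ graphs $\psi^{\cdots}$ on a $\delta_1$- and $\delta_2$-lattice, and the second application of Lemma~2.8 with $(\mathbf{q}_1,\mathbf{q}_2)=(\partial_{|u|}X,\partial_{\hat{u}_1}X)$ to select the pair $\{\xi_1,\xi_2\}\subset\{|u|,\hat{u}_1,\hat{u}_1',\hat{u}_2'\}$. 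You also correctly flag the central technical obstruction: propagating $\partial X$ through multiple specular reflections, where each bounce injects $I-2n\otimes n$ together with $\nabla_u t_{\mathbf{b}}$ which degenerates near grazing; the restriction away from $\mathcal{V}_i$ and the velocity lemma under $C^3$ uniform convexity are indeed what keep these factors controlled.

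However, be aware that what you have written is a roadmap, not a proof. The genuine content of Proposition~\ref{prop_COV} --- the precise construction of $\psi^{l_0,\vec{l},i,k}$ and $\psi_n^{l_0,\vec{l},i,k,j,m_0,\vec{m},k'}$, the verification that the degenerate set really is a finite union of graphs with $C^1$ norms bounded uniformly in $(T,t,x,v)$, the finite case analysis showing two of the four candidate directions can always be chosen, and the explicit tracking of reflection matrices across up to $K_i$ and $K_j$ bounces --- occupies a substantial portion of \cite{KL} and is not reproduced in this paper. Since the present paper deliberately black-boxes this result, a blind attempt at proving it from scratch is not something the paper's own text can validate; for the detailed argument you would need to work through \cite{KL} directly rather than through this paper.
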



\section{A priori estimates}\label{sec3}
This section is devoted to obtaining the {\it a priori} estimates along the way explained in Section \ref{sec.dsp}. To do so, for an arbitrary $T>0$, we let
\begin{equation*}
	F(t,x,v)=\mu(v) +\sqrt{\mu(v)} f(t,x,v)\geq0,
\end{equation*}
be a solution to the IBVP \eqref{def.be}, \eqref{id} and \eqref{specular} over the time interval $[0,T)$ with initial data $F_0(x,v)=\mu(v)+\sqrt{\mu(v)}f_0(x,v)\geq 0$. Set $h(t,x,v)=w(v)f(t,x,v)$ for a weight function $w$ given in \eqref{weight}. Note that the size of initial data $h_0(x,v)=w(v)f_0(x,v)$ can be large in $L^\infty$, namely, it holds that $\norm{h_0}_{L^\infty} \leq M_0$ for an arbitrarily given constant $M_0 \geq 1$. To the end, we impose the $\textit{a priori}$ assumption
\begin{equation} \label{a priori assumption}
	\sup_{0\leq t \leq T} \norm{h(t)}_{L^{\infty}} \leq \bar{M},
\end{equation}
where $\bar{M} \geq 1 $ is a large positive constant to be chosen depending only on $M_0$, not on the time $T$.  \\

\subsection{{Estimate in $L_x^\infty L_v^1$}} 
When treating the solution of small amplitude in $L^\infty$, it is a usual way to take the Boltzmann equation of the form
\begin{equation*} 
	\partial_t h+ v\cdot \nabla_x h +wLf = w \Gamma(f,f),
\end{equation*}
for $h=wf$, since nonlinearity can be estimated by $|w\Gamma(f,f)| \lesssim \langle v \rangle^{\kappa}\|h\|_{\infty}^{2}$. In case of the large amplitude problem (cf.~\cite{DHWY17,DW}), it is hard to follow the same strategy due to  velocity growth of $\langle v \rangle^{\kappa}$ for $\kappa>0$. Instead, we shall decompose $\Gamma(f,f)$ into $\Gamma_{\pm}(f,f)$  so as to estimate the gain and loss terms in the way as in \cite{DW}.

First, from Lemma \ref{Gamma+} and Lemma \ref{Gamma_new}, we can estimate $\Gamma_{+}$ by some weighted $L^{\infty}$ and $L^2$ norms of $h$. In particular, we would emphasize that the $L^{2}$ norm of $h$ plays an important role, because we expect to use the smallness of the relative entropy to control the $L^2$ and $L^1$ norms of the solution in the sense of Lemma \ref{entropy_est}. To avoid the appearance of the high order terms of $\|h\|_{\infty}$, we choose to apply the Lemma \ref{Gamma_new}. Estimate for $\Gamma_{-}(f,f)$, however, is very different to the one for $\Gamma_{+}(f,f)$ because the former always generates $\|h\|_{\infty}$ in each iteration step by the fact that $\Gamma_-(f,f)$ contains the local term $f(t,x,v)$. Thus, for the large-amplitude problem, it is beneficial to use the formulation 
\begin{equation} \label{reform_B}
	\partial_t h +v\cdot \nabla_x h + h\mathcal{A}F= K_w h +w \Gamma_+(f,f),
\end{equation}
where we have denoted $\mathcal{A}F(t,x,v)= \int_{\R^3} \int_{\S^2} B(v-u,\omega)F(t,x,u)\, d\omega du$. Note that $h\mathcal{A}F$ comes from the combination of $wLf$ and $w\Gamma_- (f,f)$. 

Although it is no longer a problem to control the gain term $\Gamma_{+}(f,f)$, one has to obtain the time-decay property of the linearized solution operator $G^f(t,s)$ corresponding to the linear equation 
\begin{equation} \label{Af}
	\partial_t h + v\cdot \nabla_x h + h\mathcal{A}F =0, 
\end{equation} 
with the corresponding specular reflection boundary for a given function $F=\mu +\sqrt{\mu} f\geq0$. Fortunately, under the {\it a priori} assumption \eqref{a priori assumption} with the parameter $\rho>0$ in the weight function $w$ to be chosen suitably large, one can still get the uniform exponential time-decay of the linearized solution operator $G^f$. The following lemma plays a key role in deriving such property. 

\begin{lemma} \label{Rf est}
Assume \eqref{a priori assumption}. Let $\rho=\rho(\bar{M})$ in the weight function $w(v)$ suitably large such that 
\begin{equation} \label{rho}
	C_{\beta}\frac{\bar{M}}{\rho^3} \leq \frac{1}{2C_*}
\end{equation}
for some generic positive constants $C_*$ and $C_\beta$ to be given in the proof, then it holds that 
\begin{equation} \label{R_f lb}
	R(f)(t,x,v)\geq \frac{1}{2}\nu(v) \quad \quad for \; all \quad (t,x,v)\in [0,T_0)\times \O \times \R^3,
\end{equation}
where $T_0\in (0,T)$ is arbitrary, and
\[
	R(f)(t,x,v)= \displaystyle \int _{\R^3}\int_{\S^2} B(v-u,\omega) \left( \mu(u)+\sqrt{\mu(u)}f(t,x,u) \right )\, d\omega du.
\]
\end{lemma}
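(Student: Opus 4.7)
The natural decomposition is
\[
R(f)(t,x,v) \;=\; \int_{\R^3}\!\!\int_{\S^2} B(v-u,\omega)\mu(u)\,d\omega du \;+\; \int_{\R^3}\!\!\int_{\S^2} B(v-u,\omega)\sqrt{\mu(u)}\,f(t,x,u)\,d\omega du \;=\; \nu(v) + \mathcal{I}(t,x,v),
\]
where the first term is exactly the equilibrium collision frequency in \eqref{def.nu}. To establish \eqref{R_f lb} it therefore suffices to absorb the perturbative piece by proving $|\mathcal{I}(t,x,v)| \leq \tfrac{1}{2}\nu(v)$ pointwise, and this is where the largeness of $\rho$ (depending on $\bar M$) enters.

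First I would estimate $\mathcal{I}$ using \eqref{1.4} together with $\int_{\S^2}q_0(\theta)\,d\omega \leq C$, which yields
\[
|\mathcal{I}(t,x,v)| \;\leq\; C\int_{\R^3} |v-u|^\kappa \sqrt{\mu(u)}\,|f(t,x,u)|\,du \;\leq\; C_\ast \nu(v) \int_{\R^3} \frac{\sqrt{\mu(u)}}{w(u)}\,|h(t,x,u)|\,du,
\]
where I use the elementary inequality $|v-u|^\kappa \leq C(1+|v|)^\kappa(1+|u|)^\kappa$, the equivalence $\nu(v)\sim(1+|v|)^\kappa$, and absorb the harmless factor $(1+|u|)^\kappa$ into the Gaussian part of $\sqrt{\mu(u)}$. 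Then the \emph{a priori} bound \eqref{a priori assumption} gives $|h(t,x,u)|\leq \bar M$, so
\[
|\mathcal{I}(t,x,v)| \;\leq\; C_\ast \bar M\, \nu(v) \int_{\R^3} \frac{\sqrt{\mu(u)}}{(1+\rho^2|u|^2)^{\beta}\,e^{\varpi|u|^2}}\,du.
\]

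The remaining integral is independent of $(t,x,v)$ and is the place where the parameter $\rho$ is exploited. Since $\varpi<\tfrac{1}{64}<\tfrac14$, the exponential factor $\sqrt{\mu}\,e^{-\varpi|u|^2}$ still decays like a Gaussian. Performing the rescaling $u = y/\rho$ and using $\beta\geq \tfrac52>\tfrac32$ to ensure convergence at infinity, I obtain
\[
\int_{\R^3} \frac{\sqrt{\mu(u)}}{(1+\rho^2|u|^2)^{\beta}\,e^{\varpi|u|^2}}\,du \;\leq\; \frac{1}{\rho^3}\int_{\R^3}\frac{dy}{(1+|y|^2)^{\beta}} \;=\; \frac{C_\beta}{\rho^3}.
\]
Combining the last two displays gives $|\mathcal{I}(t,x,v)| \leq C_\ast C_\beta\, \bar M\,\rho^{-3}\,\nu(v)$. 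Under the choice \eqref{rho}, this is bounded by $\tfrac12 \nu(v)$, and \eqref{R_f lb} follows.

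The argument is essentially a book-keeping exercise; the only potentially delicate point is making sure that the constants $C_\ast$ (coming from the crude bound $|v-u|^\kappa \leq C(1+|v|)^\kappa(1+|u|)^\kappa$ together with $\nu(v)\sim (1+|v|)^\kappa$) and $C_\beta$ (coming from the rescaled velocity integral) are \emph{generic}, i.e.\ depend neither on $\bar M$ nor on $\rho$. Once this is verified, \eqref{rho} is a valid constraint on $\rho$ given $\bar M$, and the conclusion holds on the whole interval $[0,T_0)$ since the bound depends on $t$ only through the uniform assumption \eqref{a priori assumption}.
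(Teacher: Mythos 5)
Your proposal is correct and follows essentially the same line of argument as the paper: you decompose $R(f)=\nu(v)+\mathcal{I}$, bound $|\mathcal{I}|$ by $C_*\nu(v)\int e^{-c|u|^2}|f|\,du$ after absorbing the relative-velocity factor $|v-u|^\kappa$ into the Gaussian, and then use $\|h\|_{L^\infty}\leq \bar M$ together with the rescaled integral $\int (1+\rho^2|u|^2)^{-\beta}\,du = C_\beta\rho^{-3}$ to obtain the smallness. The only cosmetic difference is that the paper replaces $\sqrt{\mu(u)}$ by $e^{-|u|^2/8}$ before invoking the $L^\infty$ bound, while you carry $\sqrt{\mu}$ through; both lead to the same $C_*C_\beta\bar M\rho^{-3}\leq \tfrac12$ condition and the same conclusion.
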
 

\begin{proof}
By \eqref{def.nu}, $R(f)(t,x,v)$ has the lower bound as 
\begin{align*}
	R(f)(t,x,v)&
			 =\nu(v) + \int _{\R^3}\int_{\S^2} B(v-u,\omega) \sqrt{\mu(u)}f(t,x,u)\, d\omega du\\
			 &\geq \nu(v) \left [1-C_* \int_{\R^3} e^{-\frac{\abs{u}^2}{8}}\abs{f(t,x,u)} \, du\right],
\end{align*}
for a generic constant $C_* \geq 1$. Therefore, to show the desired estimate \eqref{R_f lb}, it suffices to prove that 
\begin{equation} \label{G_f decay_goal}
	\int_{\R^3} e^{-\frac{\abs{v}^2}{8}}\abs{f(t,x,v)} \, dv \leq \frac{1}{2C_*},
\end{equation}
for all $(t,x) \in [0,T_0)\times \O$. We easily check that the integral in \eqref{G_f decay_goal} is bounded by
\begin{align} 	\label{G_f decay bound_1}
	\int_{\R^3} e^{-\frac{\abs{v}^2}{8}} \abs{f(t,x,v)}\, dv &=\int_{\R^3} e^{-\frac{\abs{v}^2}{8}}\frac{1}{w(v)}\abs{h(t,x,v)} \, dv\notag\\  
	&\leq \sup_{0\leq s \leq t} \norm{h(s)}_{L^\infty} \int_{\R^3} \frac{1}{(1+\rho^2 \abs{v}^2)^\beta} \, dv  \notag\\
	&\leq \bar{M} \int_{\R^3} \frac{1}{(1+\rho^2\abs{v}^2)^\beta} \,dv.
\end{align}
Notice that 
\begin{equation}
\label{def.intrho}
\int_{\R^3} \frac{1}{(1+\rho^2\abs{v}^2)^\beta}\,dv=\frac{1}{\rho^3}\int_{\R^3} \frac{1}{(1+|v|^2)^\beta}\,dv=\frac{C_\beta}{\rho^3},
\end{equation}
where $C_\beta>0$ is a constant depending only on $\beta$. Plugging  \eqref{def.intrho} to \eqref{G_f decay bound_1} and using the condition \eqref{rho}, we get the estimate \eqref{G_f decay_goal} and then complete the proof of Lemma 3.1. 
\end{proof}

\subsection{{Estimate in $L_{x,v}^{\infty}$}}
To obtain the nonlinear $L^\infty$ estimate for solutions with large amplitude, the time-decay property of the linear solution operator over the time interval $[0,T_0]$ is essential. Here, the time $T_0$ depending only on $M_0$ will be determined at the end of the proof. The following lemma implies that our linear solution operator has the time-decay property. Note that the lemma is an immediate consequence of Lemma \ref{Rf est}. 

\begin{lemma}\label{G_f decay}  
Recall the solution operator $G^f(t,s)$ to the linear equation \eqref{Af}. Assume that $\rho>0$ in the weight function $w(v)$ satisfies \eqref{rho}. 
Then, under the a pirori assumption \eqref{a priori assumption}, it holds that 
	\begin{equation} \label{G_f esti}
		\norm{G^f(t,s)h_0} _{L^\infty} \leq C e^{-\frac{1}{2} \nu_0 (t-s)} \norm{h_0} _{L^{\infty}}  \quad \textrm{ for all } \quad 0\leq s\leq t \leq T_0,
	\end{equation}
where $C>0$ is a generic constant and $\nu_0=\inf_{v}\nu(v)$ is strictly positive.
\end{lemma}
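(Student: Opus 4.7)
The plan is to integrate the transport equation \eqref{Af} along the backward specular characteristic and then use the pointwise lower bound of $R(f)=\mathcal{A}F$ provided by Lemma \ref{Rf est}. Fix $(t,x,v)\in [0,T_0]\times\O\times\R^3$ with $(x,v)\notin\gamma_0$, and let $[X_{cl}(\tau),V_{cl}(\tau)]=[X_{cl}(\tau;t,x,v),V_{cl}(\tau;t,x,v)]$ denote the backward specular cycle associated to $(t,x,v)$, defined via the recursive formula \eqref{bounce_pt}. Because $V_{cl}(\tau)$ is locally constant between consecutive bounces and each specular reflection preserves the speed, we have $|V_{cl}(\tau)|=|v|$ for all $\tau$, so that both $\nu(V_{cl}(\tau))$ and $w(V_{cl}(\tau))$ remain independent of $\tau$.

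Along such a backward characteristic, the linear equation \eqref{Af} for the operator $G^f(t,s)$ reduces to the scalar ODE
\begin{equation*}
\frac{d}{d\tau}[G^f(\tau,s)h_0](X_{cl}(\tau),V_{cl}(\tau)) = -R(f)(\tau,X_{cl}(\tau),V_{cl}(\tau)) \, [G^f(\tau,s)h_0](X_{cl}(\tau),V_{cl}(\tau)),
\end{equation*}
and the specular boundary condition makes the evaluation $[G^f(\tau,s)h_0](X_{cl}(\tau),V_{cl}(\tau))$ continuous across each bounce $t_k$. Integrating from $s$ to $t$ yields the representation
\begin{equation*}
[G^f(t,s)h_0](x,v) = \exp\!\left(-\int_s^t R(f)(\tau,X_{cl}(\tau),V_{cl}(\tau))\,d\tau\right) h_0(X_{cl}(s),V_{cl}(s)).
\end{equation*}

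Now I invoke Lemma \ref{Rf est}, which under the a priori bound \eqref{a priori assumption} and the choice \eqref{rho} of $\rho$ guarantees $R(f)(\tau,y,u)\geq \tfrac12\nu(u)\geq \tfrac12\nu_0$ uniformly over $[0,T_0)\times\O\times\R^3$. Applied along the trajectory, this gives $\int_s^t R(f)(\tau,X_{cl}(\tau),V_{cl}(\tau))\,d\tau \geq \tfrac{\nu_0}{2}(t-s)$, hence
\begin{equation*}
|[G^f(t,s)h_0](x,v)| \leq e^{-\frac{\nu_0}{2}(t-s)}\,|h_0(X_{cl}(s),V_{cl}(s))| \leq e^{-\frac{\nu_0}{2}(t-s)}\|h_0\|_{L^\infty}.
\end{equation*}
Taking the supremum over $(x,v)$ (and noting that the exceptional set $\gamma_0$ has zero measure and the estimate extends by continuity) yields \eqref{G_f esti} with $C=1$.

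The only delicate point is the treatment of the specular reflections inside the characteristic, but since $h$ is prescribed to satisfy $h(\tau,x,v)=h(\tau,x,R_xv)$ on $\partial\O$ and both $R(f)$ and $\nu$ are invariant under $v\mapsto R_xv$, the ODE integration passes unchanged through each bounce time $t_k$; no boundary correction or extra loss appears. The bound \eqref{R_f lb} thus transfers directly into the claimed exponential decay of $G^f(t,s)$ in $L^\infty$.
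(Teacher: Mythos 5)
Your proof is correct and takes essentially the same approach as the paper: both write $G^f(t,s)$ explicitly as the exponential of $-\int_s^t R(f)$ along the backward specular cycle, invoke Lemma \ref{Rf est} for the lower bound $R(f)\geq\tfrac12\nu(v)\geq\tfrac12\nu_0$, and conclude. The extra remarks you add about continuity across bounce times and the invariance of $\nu$, $w$, $R(f)$ under $v\mapsto R_xv$ are consistent with, and merely make explicit, what the paper leaves implicit.
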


\begin{proof}
    	It follows from \eqref{R_f lb} in Lemma 3.1 that 
	\begin{align} \label{Rf_split}
		R(f)(t,x,v)\geq\frac{1}{2}\nu(v), 		
	\end{align}
for all $(t,x,v) \in [0,T_0)\times\O\times \R^3$. We only need to consider the estimate for $G^f(t,0)$ because the estimate for $G^f(t,s)$ can be deduced similarly. 
Note that
\begin{align*}
	G^f(t,s)= \exp \left \{ -\int_s^t R(f)(\tau,X_{cl}(\tau),V_{cl}(\tau)) \,d\tau \right\}.
\end{align*}
We can obtain from \eqref{Rf_split} that 
\begin{align*}
	G^f(t,0)\leq \exp\left \{-\int_0^t \frac{\nu_0}{2} \, d\tau \right \} = e^{-\frac{\nu_0}{2}t}.
\end{align*}
Moreover, it is obvious to see 
\begin{align} \label{G_f 1}
	\norm {G^f(t,0)h_0}_{L^\infty} \leq  e^{-\frac{1}{2}\nu_0 t}\norm{h_0}_{L^\infty}.
\end{align}
Using \eqref{G_f 1}, we get the estimate \eqref{G_f esti} and then complete the proof of Lemma 3.2.  
\end{proof} 
 
We try to apply Lemma \ref{G_f decay} to treat the nonlinear $L^\infty$ estimate over the time interval $[0,T_0]$. For this purpose, we recall the Boltzmann equation \eqref{reform_B} and use the Duhamel principle to rewrite it in terms of $h(t,x,v)=w(v)f(t,x,v)$ as 
\begin{equation} \label{Duhamel}
	h(t,x,v)=G^f(t,0)h_0+\int_0^t G^f(t,s)K_wh(s) \, ds + \int_0^t G^f(t,s)w\Gamma_+(f,f)(s)\,  ds,
\end{equation}
where $G^f(t,s)$ be the solution operator to the linear equation \eqref{Af}. 

\begin{lemma} \label{L_inf_esti}
Let $h(t,x,v)$ satisfy the equation \eqref{reform_B}. Assume that \eqref{a priori assumption} holds true and $\rho>0$ in \eqref{weight} satisfies \eqref{rho}. Denote $A_0=\sqrt{\mathcal{E}(F_0)}+\mathcal{E}(F_0)$. 
Let $t \in (0,T_0]$, 
then it holds that 
\begin{align} \label{L_inf_esti.result}
	\|h(t)\|_{L^\infty}
	&\leq Ce^{-\frac{\nu_0}{4}t} \left(\int_0^t \norm{h(s)}_{L^\infty} +1 \right)\left[\norm{h_0}_{L^\infty}+\norm{h_0}_{L^\infty}^2 \right]  \notag\\
	&\quad	 +C_{\O}\left(\delta+\frac{1}{\sqrt{N}}\right)\sup_{0\leq s \leq t }\norm{h(s)}_{L^\infty}^5 + C_{N,\delta,\O}A_0,
\end{align}
where $0 < \delta \ll 1$ and $N\gg 1$ can be chosen arbitrary small and large, respectively.
\end{lemma}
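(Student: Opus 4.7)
The plan is to triple-iterate the mild formulation (3.5) so as to generate the three velocity integrations needed for Proposition 2.10's change-of-variables, while using Lemmas 2.4 and 2.5 in a carefully chosen order to prevent the accumulation of more than three factors of $\|h\|_\infty$. I begin by bounding $\|G^f(t,0)h_0\|_{L^\infty}$ with Lemma 3.2. For the gain-term integral in (3.5) I apply Lemma 2.4 (the $\|h\|_\infty\sqrt{\int k h^2}$ bound), and for the $K_w h$ term I use the Vidav splitting in Lemma 2.1 together with the standard truncation on $\{|v|\geq N\}\cup\{|v-\eta|\leq 1/N\}$, losing only a harmless factor $1/N$. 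This first iteration gives a pointwise bound with a quadratic integral of $h$ and a single $\|h\|_\infty$ in front.

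Inside this quadratic integral I substitute (3.5) a second time, now applying Lemma 2.5 for the gain term so that $w\Gamma_+\lesssim\int\tilde k_2(u,u')h^2(u')\,du'$; crucially no new $\|h\|_\infty$ is produced, only the kernel's singularity is sharpened to the integrable $|u-u'|^{-1}$. The $h_0$ contribution from this second iteration yields, after $\sqrt{\int k h_0^2}\lesssim \|h_0\|_\infty$, the term $\|h\|_\infty\|h_0\|_\infty$, while the nonlinear residue has the shape $\|h\|_\infty\{\int k(\int\tilde k_2 h^2)^2\}^{1/2}$. I iterate one last time by expanding the innermost $h$ via (3.5), this time again using Lemma 2.4 for $\Gamma_+$. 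The $h_0$ pieces contribute the factors $\|h_0\|_\infty+\|h_0\|_\infty^2$ in (3.8) together with a $K_w$-driven $(1+\int_0^t\|h(s)\|_\infty ds)$ coming from the linear Duhamel term, all carrying the decay envelope $e^{-\nu_0 t/4}$ from repeated applications of Lemma 3.2.

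The surviving fully-nonlinear piece has the schematic form
\[
\|h\|_\infty^{3}\left\{\iiint_{\R^9}\!\!k(u)\tilde k_2^{\,2}(u,u')k^{2}(u'')\,h^{4}(\tau,X''_{cl}(\tau),u'')\,du''\,du'\,du\right\}^{1/2},
\]
where the doubled kernel $\tilde k_2^{\,2}$ has singularity $|u-u'|^{-2}$, still integrable in $\R^3$, and the Gaussian tails of all kernels remain under control by Lemma 2.1, Lemma 2.5 and the smallness of $\varpi$. I now invoke Lemma 2.8 and Proposition 2.10: outside an exceptional set in $(x,u,u',u'')$-space of measure $O(\delta+1/\sqrt N)$, there exist three components $(|u''|,\xi_1,\xi_2)\subset\{|u|,\hat u_1,\hat u_1',\hat u_2'\}$ along which the Jacobian of $X''_{cl}(\tau)$ is bounded below by $\epsilon'_{\Omega,N,\delta,\delta_1,\delta_2}$. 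On this good region I change variables $du''\to dX''_{cl}$, bound $h^4\leq \|h\|_\infty^2 h^2$, and apply Lemma 2.7 to control $\int_\Omega h^2\,dy$ by $A_0=\sqrt{\mathcal{E}(F_0)}+\mathcal{E}(F_0)$, producing the $C_{N,\delta,\Omega}A_0$ term after absorbing the remaining $\|h\|_\infty^2$ via Young's inequality against the $\|h\|_\infty^5$ piece. On the exceptional region I use the brutal bound $h^4\leq\|h\|_\infty^4$, which contributes the $C_\Omega(\delta+1/\sqrt N)\|h\|_\infty^5$ term directly.

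The main obstacle is the bookkeeping in Step 3: one must verify that the order (Lemma 2.4)-(Lemma 2.5)-(Lemma 2.4) is the only one in which no cross term produces a power of $\|h\|_\infty$ higher than $5$, and simultaneously that the \emph{squared} kernel $\tilde k_2^{\,2}$ from the middle iteration remains integrable and compatible with the Jacobian estimates of Proposition 2.10 on the subregions where one picks the coordinates $\{\xi_1,\xi_2\}$. A subsidiary technical point is handling the boundary-reflection discontinuities of the trajectories $X''_{cl}$ when applying the change of variables on the $K_i$ specular cycles of Lemma 2.8, but this follows by summing over the finitely many cycles and using \eqref{Lee D .1} to bound the number of bounces uniformly.
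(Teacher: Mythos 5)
Your proposal is correct and follows essentially the same route as the paper's proof: a triple Duhamel iteration with the gain-term estimates applied in the order (Lemma \ref{Gamma+})--(Lemma \ref{Gamma_new})--(Lemma \ref{Gamma+}), the change of variables via Proposition \ref{prop_COV} on the good set, and the entropy control of Lemma \ref{entropy_est} combined with Young's inequality to produce the $C_{N,\delta,\Omega}A_0$ and $C_{\Omega}(\delta+1/\sqrt N)\|h\|_{L^\infty}^5$ terms. Two minor slips --- the factor $\int_0^t\|h(s)\|_{L^\infty}\,ds$ actually originates from the first $\Gamma_+$ application (Lemma \ref{Gamma+}, whose bound carries a $\|h(s)\|_{L^\infty}$ prefactor) rather than from the $K_w$ chain, and Proposition \ref{prop_COV}'s Jacobian is taken with respect to $(|u'|,\xi_1,\xi_2)$, not $(|u''|,\xi_1,\xi_2)$ --- do not affect the validity of the argument.
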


\begin{proof}
Take $(t,x,v)\in  (0,T_0] \times \O \times \R^3$. It is direct to deduce from \eqref{Duhamel}, Lemma \ref{G_f decay}, and \eqref{gamma_gain 1} in Lemma \ref{Gamma+} that
\begin{align} \label{L_inf_esti.1}
	\abs{h(t,x,v)} &\leq \abs{G^f(t,0)h_0} + \int_0^t \abs{G^f(t,s)} \bigg[\abs{K_wh(s)} + \abs{w\Gamma_+(f,f)(s)}\bigg] \, ds \nonumber \\
	&\leq C e^{-\frac{\nu_0}{2} t} \norm{h_0}_{L^\infty}  \notag\\ 
	&\quad+ C \int_0^t e^{-\frac{\nu_0}{2} (t-s)} \int_{\R^3} \left \vert k_w(V_{cl}(s),u)h(s,	X_{cl}(s),u) \right \vert \, duds  \notag\\ 
	&\quad + C\int_0^t e^{-\frac{\nu_0}{2} (t-s)} \frac{\norm{h(s)}_{L^\infty}}{1+\abs{v}} \bigg(\int_{\R ^3} (1+\abs{u})^{-4\beta+4} \abs{h(s,X_{cl}(s),u)}^2 \, du\bigg)^{\frac{1}{2}}\, ds\notag\\ 
	&:=I_1+I_2+I_3, 
\end{align} 
where $I_1,I_2$ and $I_3$ denote those terms on the right-hand side, respectively.
For $\abs{v} \geq N$, we have the following estimates about the two integral terms in \eqref{L_inf_esti.1} 
\begin{align} \label{I_2,I_3}
	\begin{split}
	I_2 \leq \frac{C}{N}
	\sup_{0\leq s \leq t}\norm{h(s)}_{L^\infty}, \quad 
	I_3 
	\leq \frac{C}{N}  \sup_{0\leq s \leq t}\norm{h(s)}_{L^\infty}^2, 
	\end{split}
\end{align}
where Lemma \ref{Lemma_nega} has been used.
From now on, we only conisder $\abs{v} \leq N$ for $I_{2}$ and $I_{3}$. 

\medskip
\noindent\underline{\textit{Estimate  on $I_2$}}: 
Firstly, we divide the integral domain of $I_2$ in \eqref{L_inf_esti.1} into $\{\abs{u}\leq 2N\}\cup \{\abs{u} \geq 2N\}$ so as to write that
\begin{align} \label{L_inf_split.1(n)}
	I_2=\int_0^t \int_{\abs{u}\geq 2N} + \int_0^t \int_{\abs{u}\leq 2N}.  
\end{align}
In the domain $\{\abs{v} \leq N, \abs{u}\geq 2N\}$, using \eqref{k_esti.3} in Lemma \ref{Lemma_nega} yields that 
\begin{align} \label{k_prop.1(n)}
	\begin{split}
		\int_{\abs{u} \geq 2N} \abs{k_w(v,u)} \,du \leq e^{-\frac{N^2}{32}} \int_{\abs{u} \geq 2N}  \abs{k_w(v,u)}  e^{\frac{\abs{v-u}^2}{32}} \,du \leq \frac{C}{N}.
	\end{split}
\end{align}
Thus, we can bound the first part in \eqref{L_inf_split.1(n)} from \eqref{k_prop.1(n)} in the way that
\begin{align} \label{I_2.1}
	\int_0^t \int_{\abs{u}\geq 2N} \leq C \int_0^t e^{-\frac{\nu_0}{2} (t-s)} \int_{\abs{u} \geq 2N} \left \vert k_w(V_{cl}(s),u)h(s,X_{cl}(s),u) \right \vert \, duds \leq \frac{C}{N} 
	\sup_{0\leq s \leq t} \norm{h(s)}_{L^\infty}.
\end{align}
Applying the Duhamel formula \eqref{Duhamel} to the second part in \eqref{L_inf_split.1(n)}, we get
\begin{align} \label{L_inf_esti.3}
	 \int_0^t \int_{\abs{u}\leq 2N} 
	 &\leq C\int_0^t e^{-\frac{\nu_0}{2} (t-s)} \int_{\abs{u}\leq 2N} \abs{k_w(V_{cl}(s),u)h(s,X_{cl}(s),u)} \, duds \nonumber \\
	 & \leq  C \int_0^t e^{-\frac{\nu_0}{2} (t-s) } \int _{\abs{u}\leq 2N} \abs{k_w(V_{cl}(s),u)G^f(s,0)h_0}\, duds \nonumber \\
	 &\quad+C \int_0^t \int_{\abs{u}\leq 2N}  e^{-\frac{\nu_0}{2} (t-s) }\abs{k_w(V_{cl}(s),u)}\left \vert \int_0^s G^f(s,s')K_wh(s')ds' \right \vert \, du ds \nonumber \\ 
	 &\quad+ C  \int_0^t \int_{\abs{u}\leq 2N}e^{-\frac{\nu_0}{2}  (t-s) } \abs{k_w(V_{cl}(s),u)} \left \vert \int_0^s G^f(s,s')w\Gamma_+(f,f)(s') ds'\right \vert \, du ds \nonumber \\
	 &\leq C\norm{h_0}_{L^\infty} \int_0^t e^{-\frac{\nu_0}{2}t}\int_{\abs{u}\leq 2N} \abs{k_w(V_{cl}(s),u)} duds \nonumber \\ 
	 &\quad+C\int_0^t \int_0^s \int_{\abs{u}\leq 2N}e^{-\frac{\nu_0}{2}(t-s')}\abs{k_w(V_{cl}(s),u)} 
	 \int_{\R^3} \abs{k_w(V'_{cl}(s'),u')h(s',X'_{cl}(s'),u')}du'duds'ds \nonumber \\ 
	 &\quad+ C\int_0^t \int_0^s \int_{\abs{u}\leq 2N}e^{-\frac{\nu_0}{2}(t-s')}\abs{k_w(V_{cl}(s),u)} 
	\int_{\R^3} \abs{\tilde{k}_2(V'_{cl}(s'),u')h^2(s',X'_{cl}(s'),u')}du'duds'ds\nonumber \\ 
	 &:= I_{21} + I_{22} + I_{23},
\end{align}
where the last inequality comes from \eqref{gamma 2} in Lemma \ref{Gamma_new} and Lemma \ref{G_f decay}.  
To estimate those terms in \eqref{L_inf_esti.3}, from 
\begin{align} \label{k_prop.1} 
	\int_{\R^3} \abs{k_w(V_{cl}(s),u)} \, du \leq C,
\end{align}
the term $I_{21}$ in \eqref{L_inf_esti.3} is bounded by 
\begin{equation} \label{L_inf_esti.4}
	I_{21} \leq C  e^{-\frac{\nu_0}{4}t} \norm{h_0} _{L^\infty}.
\end{equation}
We further split the integration domain of $I_{22}$ in \eqref{L_inf_esti.3} as 
\begin{align} \label{L_inf_split.2(n)}
	I_{22} = \int_0^t \int_0^s \int_{\abs{u}\leq 2N} \int_{\abs{u'}\geq 3N} + \int_0^t \int_0^s \int_{\abs{u}\leq 2N} \int_{\abs{u'} \leq 3N}. 
\end{align}
Using \eqref{k_prop.1(n)} and \eqref{k_prop.1}, the first part in \eqref{L_inf_split.2(n)} is further bounded by
\begin{multline} \label{I_22.1}
	C \int_0^t\int_0^s \int_{\abs{u}\leq 2N} \int_{\abs{u'}\geq 3N} e^{-\frac{\nu_0}{2} (t-s') }\abs{k_w(V_{cl}(s),u)k_w(V'_{cl}(s'),u')h(s',X'_{cl}(s'),u')} \, du'duds' ds \\ 
	\leq \frac{C}{N} \sup_{0\leq s \leq t} \norm{h(s)}_{L^\infty}. 
\end{multline}
We apply \eqref{L_inf_esti.1} to $h(\tau,X_{cl}'(\tau),u')$ in the second part of \eqref{L_inf_split.2(n)}, and hence use Lemma \ref{G_f decay} to arrive at
{\footnotesize\begin{align} \label{L_inf_split.3(n)}
	&C \int_0^t\int_0^s \int_{\abs{u}\leq 2N} \int_{\abs{u'}\leq 3N} e^{-\frac{\nu_0}{2} (t-s') }\abs{k_w(V_{cl}(s),u)k_w(V'_{cl}(s'),u')h(\tau,X'_{cl}(s'),u')} \,du'duds' ds   \nonumber \\
	&\leq C\norm{h_0}_{L^\infty} \int_0^t \int_0^s\int_{\abs{u}\leq 2N} \int_{\abs{u'}\leq 3N} 
	e^{-\frac{\nu_0}{2} t} \abs{k_w(V_{cl}(s),u)k_w(V'_{cl}(s'),u')} \, du'duds' ds\notag\\
	&\quad+C\int_0^t \int_0^s  \int_0^{s'} \int_{\abs{u}\leq 2N} \int_{\abs{u'}\leq 3N} 
	e^{-\frac{\nu_0}{2}(t-s'')} \abs{k_w(V_{cl}(s),u)k_w(V'_{cl}(s'),u')} \int_{\R^3} \abs{k_w(V'_{cl}(s''),u'')h(s'',X'_{cl}(s''),u'')} du''\notag\\
	&\quad+C\int_0^t \int_0^s  \int_0^{s'} \int_{\abs{u}\leq 2N} \int_{\abs{u'}\leq 3N} 
	e^{-\frac{\nu_0}{2}(t-s'')} \abs{k_w(V_{cl}(s),u)k_w(V'_{cl}(s'),u')}\notag\\
	&\quad \quad \times \frac{\norm{h(s')}_{L^\infty}}{1+\abs{u'}} \left ( \int_{\R^3} (1+\abs{u''})^{-4\beta +4} 
	\abs{h(s'',X'_{cl}(s''),u'')}^2 du'' \right)^{\frac{1}{2}}du'duds''ds' ds\notag\\
	&:=I_{221}+I_{222}+I_{223}.
\end{align}}
To estimate terms in \eqref{L_inf_split.3(n)}, it is direct to deduce from \eqref{k_prop.1} that 
\begin{align}\label{I_221}
	I_{221} \leq Ce^{-\frac{\nu_0}{4} t} \norm{h_0}_{L^\infty}. 
\end{align}
For the large velocity region $\{ \abs{u''} \geq 4N\}$, it follows from \eqref{k_prop.1(n)} and \eqref{k_prop.1} that 
\begin{align} \label{I_222.1}
	&C \int_0^t \int_0^s  \int_0^{s'} \int_{\abs{u}\leq 2N} \int_{\abs{u'}\leq 3N} 
	e^{-\frac{\nu_0}{2}(t-s'')} \abs{k_w(V_{cl}(s),u)k_w(V'_{cl}(s'),u')} \notag\\
&\qquad\qquad\qquad\qquad\qquad\qquad\times\int_{\abs{u''} \geq 4N} 
	\abs{k_w(V'_{cl}(s''),u'')h(s'',X'_{cl}(s''),u'')} du'' \leq \frac{C}{N}\sup_{0\leq s \leq t} \norm{h(s)}_{L^\infty}. 
\end{align}
Thus, we only consider the case $\{ \abs{u''} \leq 4N \}$ in $I_{222}$. Using Proposition \ref{prop_COV}, we see that
\begin{equation*}
	\begin{split}
		&\exists \; i_s \in \{1,2,\dots, I_{\O,N}\} \; \textrm{such that} \; X(s;t,x,v) \in \mathcal{O}_{i_s}, \\
		&\exists \; j_{s,s'} \in \{1,2, \dots, I_{\O,N} \} \; \textrm{such that} \; X(s';s,X(s;t,x,v),u) \in \mathcal{O}_{j_{s,s'}},
	\end{split}
\end{equation*}
and then we can define the following sets for fixed $n,\vec{n}, i,k,m,\vec{m},j$, and $k'$ as in Proposition \ref{prop_COV}
\begin{equation*}
	\begin{split}
		&R_1:= \left \{u \; \vert \; u \notin  B\left(\vec{n} \delta ;2\delta \right) \cap \left \{ \R^3 \backslash \mathcal{V}_{i}(\hat{e}_1,\hat{e}_2) \right \} \right \}\\ 
		&R_2:= \left \{ s' \; \vert \; \abs{s-s'} \leq \delta \right \} \\
		&R_3:= \left \{ s' \; \vert \; \abs{s'-\psi_1^{n,\vec{n},i,k,m,\vec{m},j,k'}(n\delta, X(n\delta;t,x,v),\vec{n}\delta)} \lesssim_{N} \delta \norm{\psi_1}_{C^1}  \right \} \\
		&R_4:= \left \{ u^{\prime} \; \vert \; u^{\prime} \notin B(\vec{m}\delta;2\delta) \cap \left \{ \R^3 \backslash \mathcal{V}_{j_{s,s'}} (\partial_{\abs{u}} X,\partial_{\hat{u}_1} X \right \} \right \} \\
		&R_5:= \left \{ s'' \; \vert \; \abs{s'-s''} \leq \delta \right \} \\
		&R_6:= \left \{ s'' \; \vert \; \min_{r=1,2} \abs{ s'' - \psi_{r}^{n,\vec{n},i,k,m,\vec{m},j,k'}\left (m\delta,X(m\delta;n \delta,X(n\delta;t,x,v),\vec{m}\delta),\vec{n}\delta \right)}\lesssim_{N} \delta  \min_{r=1,2} \norm{\psi_r}_{C^1} \right \}. 
	\end{split}
\end{equation*}
Therefore, 
it holds that
{\footnotesize\begin{align} \label{G_f decay split.2(n)}
		I_{222}\textbf{1}_{\abs{u''}\leq 4N}	
		&= \sum_{n=1}^{[t / \delta]} \sum_{\abs{\vec{n}} \leq N} \sum_{m=1}^{[ t/\delta]} \sum_{\abs{\vec{m}} \leq N} 
		\sum_{k}^{K_{i_s}} \sum_{k'}^{K'_{j_{s,s'}}}\int_{(n-1)\delta}^{(n+1)\delta} \int_{t^{k+1}+\delta} ^{t^k-\delta} \int_{t^{k'+1}+\delta}^{t^{k'}-
		\delta} e^{-\frac{\nu_0}{2}(t-s'')} \notag\\ 
		&\quad \times \int_{\abs{u}\leq 2N, \abs{u'}\leq 3N, \abs{u''}\leq 4N}  \abs{k_w(V_{cl}(s),u) k_w(V'_{cl}(s'),u') k_w(V'_{cl}(s''),u'')h(s'',X'_{cl}(s''),u'')}  \textbf{1}_{R_1^{c} \cap \cdots \cap R_6^{c}}\notag\\
		&\qquad + \mathcal{B}_{1} + \mathcal{R}_{1},
\end{align}}
where the $\mathcal{B}_{1}$ term corresponds to where the trajectory is near bouncing points and $\mathcal{R}_{1}$ corresponds to where $(u,s',u',s'')$ is in one of $R_1$ through $R_6$. So we have the following smallness estimates for $\mathcal{B}_{1}$ and $\mathcal{R}_{1}$: 
{\footnotesize\begin{align} \label{B,R}
		\mathcal{B}_{1} &\leq \int_0^t \int_0^s \int_0^{s'}e^{-\frac{\nu_0}{2}(t-s'')} \int_{\abs{u}\leq 2N}  \int_{\abs{u'} \leq 3N} 
		\abs{k_w(V_{cl}(s),u)k_w(V'_{cl}(s'),u')} \notag\\ 
		& \quad\times \int_{\abs{u''}\leq 4N} \abs{k_w(V'_{cl}(s''),u'')h(s'',X'_{cl}(s''),u'')} 
		\textbf{1}_{\{t^{k+1} \leq s' \leq t^{k+1}+\delta\} \cup \{t^k-\delta \leq s' \leq t^k\}} 
		\textbf{1}_{\{t^{k'+1} \leq s'' \leq t^{k'+1}+\delta\} \cup \{t^{k'}-\delta \leq s'' \leq t^{k'} \}} \notag\\
		&\leq C\delta \sup_{0 \leq s \leq t} \norm{h(s)}_{L^\infty}, \notag\\ 
		\mathcal{R}_{1} &\leq  \int_0^t \int_0^s\int_0^{s'} e^{-\frac{\nu_0}{2} (t-s'')} \int_{\abs{u}\leq 2N}  \int_{\abs{u'}\leq 3N} \abs{k_w(V_{cl}(s),u)k_w(V'_{cl}(s'),u')} \notag\\
		 &\quad\times\int_{\abs{u''} \leq 4N} \abs{k_w(V'_{cl}(s''), u'') h(s'',X'_{cl}(s''),u'')} \textbf{1}_{R_1 \cup R_2 \cup \cdots \cup R_6}\notag\\ 
		&\leq C \delta \sup_{0\leq s \leq t} \norm{h(s)}_{L^\infty}. 
\end{align}}
Now let us consider the first term on the right-hand side of \eqref{G_f decay split.2(n)}. 
Under the conditions of 
\begin{align*}
	(u,s',u',s'') &\in R_1^c \cap R_2^c \cap \cdots \cap R_6^c, \\ 
	t & \in \left [(n-1)\delta, (n+1)\delta \right], \\ 
	X(s;t,x,v) &\in \mathcal{O}_{i_s}, \\
	X(s';s,X(s;t,x,v),u) & \in \mathcal{O}_{j_{s,s'}}, \\
	u & \in B(\vec{n}\delta ;2\delta) \cap \R^3 \backslash \mathcal{V}_{i_s} (\hat{e}_1,\hat{e}_2), \\ 
	u' & \in B(\vec{m} \delta; 2\delta) \cap \R^3 \backslash \mathcal{V}_{j_{s,s'}} (\partial_{\abs{u}} X, \partial_{\hat{u}_1} X),  
\end{align*} 
applying Proposition \ref{prop_COV} gives a lower bound of determinant which comes from a change of variables:
\begin{align*}
	\left \vert \det \left( \frac{\partial (X(s''))}{\partial(\abs{u'},\xi_1,\xi_2)}\right) \right \vert \geq \epsilon'_{\delta},
\end{align*}
where $\epsilon'_{\delta}$ does not depend on time. Note that $ \xi_1$ and $\xi_2$ are the variables selected from $\{ \abs{u}, \hat{u}_1 , \hat{u}'_1, \hat{u}'_2\}$ in Proposition \ref{prop_COV} and the remaining variables among $\{ \abs{u}, \hat{u}_1 , \hat{u}'_1, \hat{u}'_2\}$ are $\xi_3$ and $\xi_4$. Let $\mathcal{P}$ be the projection map from $B(\vec{n}\delta ; 2\delta) \cap \R^3 \backslash \mathcal{V}_{i_s} (\hat{e}_1,\hat{e}_2) \times B(\vec{m}\delta; 2\delta) \cap \R^3 \backslash \mathcal{V}_{j_{s,s'}} (\partial_{\abs{u}} X, \partial_{\hat{u}_1} X) $ into $\R^3$, which corresponds to the $(\abs{u'},\xi_1,\xi_2)$ components. For the sufficiently small $\delta>0$, there exist small $r_{\delta,n,\vec{n},i,k,m,\vec{m}, j,k'} $ such that there exists a one-to-one mapping
\begin{multline*}
	\mathcal{M}: \; \mathcal{P}\left ( B(\vec{n}\delta ;2\delta) \cap \R^3 \backslash \mathcal{V}_{i_s} (\hat{e}_1,\hat{e}_2) \times
	B(\vec{m}\delta ; 2\delta) \cap \R^3 \backslash \mathcal{V}_{j_{s,s'}} (\partial_{\abs{u}} X, \partial_{\hat{u}_1}X) \right) \\ 
	\mapsto B \left ( X(s'';s',X(s';s,X(s;t,x,v),u),u') ,r_{\delta,n,\vec{n},i,k,m,\vec{m},j,k'} \right). 
\end{multline*}
Now, we perform a change of variable for the first term on the right-hand side of \eqref{G_f decay split.2(n)}. Note that \eqref{k_esti.0} in Lemma 2.1 yields
\begin{align} \label{k_prop.2(n)}
	\int_{\abs{u} \leq N} \abs{k_w(v,u)}^2 du \leq C. 
\end{align}
Applying the H\"{o}lder's inequality and \eqref{k_prop.2(n)}, we get the following estimate for the first term on the right-hand side of \eqref{G_f decay split.2(n)} as 
{\footnotesize\begin{align} \label{I_222.2}
	\sum_{n=1}^{[t/\delta]} & \sum_{\abs{\vec{n}} \leq N} \sum_{m=1}^{[t/\delta]} \sum_k^{K_{i_s}} \sum_{k'}^{K'_{j_{s,s'}}} \int_{(n-1)\delta}^{(n+1)\delta} \int_{t^{k+1}+\delta}^{t^k-\delta}  \int_{t^{k'+1}+\delta} ^{t^{k'}-\delta}  e^{-\frac{\nu_0}{2}(t-s'')} \notag\\
	&\times \left ( \int_{\abs{u}\leq 2N, \abs{u'}\leq 3N, \abs{u''}\leq 4N}  \abs{k_w(V_{cl}(s),u)k_w(V'_{cl}(s'),u') k_w(V'_{cl}(s''),u'')}^2 \; du'' du' du \right )^{\frac{1}{2}}  \notag\\
	&\times \left(\int_{\abs{u}\leq 2N, \abs{u'}\leq 3N,\abs{u''}\leq 4N} \abs{h(s'',X'_{cl}(s''),u'')}^2 
	 \textbf{1}_{R_1^c \cap \cdots \cap R_6^c} \; du'' du' du \right)^{\frac{1}{2}} ds'' ds'ds\notag\\
	 &\leq C \sum_{n=1}^{[t/\delta]} \sum_{\abs{\vec{n}} \leq N} \sum_{m=1}^{[t/\delta]} \sum_k^{K_{i_s}} \sum_{k'}^{K'_{j_{s,s'}}} \int_{(n-1)\delta}^{(n+1)\delta} \int_{t^{k+1}+
	 \delta}^{t^k-\delta} \int_{t^{k'+1}+\delta} ^{t^{k'}-\delta}  e^{-\frac{\nu_0}{2}(t-s'')} \notag\\
	&\times \left( \int_{u'' \in \R^3} \int_{\hat{u}_2,\xi_3,\xi_4} \textbf{1}_{\{ \abs{u}\leq 2N, \abs{u'}\leq 3N, \abs{u''}\leq 4N \}} \; d\hat{u}_2d\xi_3 d\xi_4
	  \right)^{\frac{1}{2}}
	  \left ( \int_{\abs{u'},\xi_1,\xi_2} \abs{h(s'',X'_{cl}(s''),u'')}^2  \; d\abs{u'}d\xi_1d\xi_2
	   \right) ^{\frac{1}{2}} ds''ds'ds \notag\\ 
	   &\leq C_{N,\delta,\O} \sum_{n=1}^{[t/\delta]} \sum_{\abs{\vec{n}} \leq N} \sum_{m=1}^{[t/\delta]} \sum_k^{K_{i_s}} \sum_{k'}^{K'_{j_{s,s'}}}
	    \int_{(n-1)\delta}^{(n+1)\delta}\int_{t^{k+1}+\delta}^{t^k-\delta} \int_{t^{k'+1}+\delta} ^{t^{k'}-\delta}  e^{-\frac{\nu_0}{2}(t-s'')}\notag\\
	    &\times \left(\int_{\abs{u''}\leq 4N } \int_{B(X(s''),r_{\delta,n,\vec{n},i,k,m,\vec{m},j,k'})} \abs{h(s'',y,u'')}^2 
	    \frac{1}{\epsilon'_{\O,N,\delta}}  \; dydu'' \right)^{\frac{1}{2}}\notag\\
	    &\leq C_{N,\delta,\O} \int_0^t \int_0^s\int_0^{s'} e^{-\frac{\nu_0}{2}(t-s'')} \left( \int_{y \in \O} \int_{\abs{u''}\leq 4N} \abs{h(s'',y,u'')}^2 \; du'' dy\right)^{\frac{1}{2}} \; ds''ds'ds.
\end{align}}
From Lemma \ref{entropy_est} and Young's inequality, one obtains that
{\footnotesize\begin{align} \label{entropy.1(n)}
	&C_{N,\delta,\O} \int_{y \in \O} \int_{\abs{u''}\leq 4N} \abs{h(s'',y,u'')}^2 du''dy \notag\\ 
	&=C_{N,\delta,\O} \left(\int_{y\in \O} \int_{\abs{u''} \leq 4N} \abs{h(s'',y,u'')}^2 \textbf{1}_{\abs{F-\mu}\leq \mu} \; du'' dy
	+\int_{y\in \O} \int_{\abs{u''} \leq 4N} \abs{h(s'',y,v'')}^2 \textbf{1}_{\abs{F-\mu}> \mu} \; du'' dy\right) \notag\\
	&\leq C_{N,\delta,\O} \left(\int_{y\in \O} \int_{\abs{u''} \leq 4N} \abs{f(s'',y,u'')}^2 \textbf{1}_{\abs{F-\mu}\leq \mu} \; du'' dy 
	+\sup_{0 \leq s \leq t} \norm{h(s)}_{L^\infty} \int_{y\in \O} \int_{\abs{u''} \leq 4N} \abs{h(s'',y,u'')} \textbf{1}_{\abs{F-\mu}>\mu} \; du'' dy \right) \notag\\
	&\leq C_{N,\delta,\O} \mathcal{E}(F_0) + C_{N,\delta,\O}\sup_{0 \leq s \leq t} \norm{h(s)}_{L^\infty} \int_{y\in \O} \int_{\abs{u''} \leq 4N} \sqrt{\mu(u'')}\abs{f(s'',y,u'')} 
	\textbf{1}_{\abs{F-\mu}>\mu} \; du'' dy \notag\\
	&\leq C_{N,\delta,\O} \mathcal{E}(F_0) + C_{N,\delta,\O} \sup_{0\leq s \leq t}\norm{h(s)}_{L^\infty} \mathcal{E}(F_0)\notag\\
	&\leq \frac{C_{\O}}{N} \sup_{0 \leq s \leq t} \norm{h(s)}_{L^\infty}^2 + C_{N,\delta,\O} [\mathcal{E}(F_0)+\mathcal{E}(F_0)^2].
\end{align}}
Thus, from \eqref{G_f decay split.2(n)}, \eqref{B,R}, \eqref{I_222.2} and \eqref{entropy.1(n)}, we have the following estimate 
\begin{equation} \label{I_222.3}
		I_{222} \textbf{1}_{\abs{u''}\leq 4N} \leq C_{\O}\left(\delta+\frac{1}{\sqrt{N}}\right)\sup_{0\leq s \leq t} \norm{h(s)}_{L^\infty} + C_{N,\delta,\O} A_0. 
\end{equation}
Here and to the end we have denoted $A_0=\sqrt{\mathcal{E}(F_0)}+\mathcal{E}(F_0)$.
Hence, it follows from \eqref{I_222.1} and \eqref{I_222.3} that
\begin{align} \label{I_222}
	I_{222} \leq C_{\O} \left(\delta+\frac{1}{\sqrt{N}}\right) \sup_{0\leq s \leq t} \norm{h(s)}_{L^\infty} + 
	C_{N,\delta,\O} A_0. 
\end{align}
It remains to estimate $I_{223}$ in \eqref{L_inf_split.3(n)}. Firstly, over $\{\abs{u''}\geq 4N\}$, $I_{223}$ is further bounded by 
\begin{align} \label{I_223.1}
	&I_{223} \textbf{1}_{\{\abs{u''}\geq 4N\}}\nonumber \\
	&\leq C\sup_{0 \leq s \leq t} \norm{h(s)}_{L^\infty}^2 
	 \int_0^t \int_0^s  \int_0^{s'}e^{-\frac{\nu_0}{2}(t-s'')}\left( \int_{\abs{u''} \geq 4N} 
	 (1+\abs{u''})^{-4\beta+4} du''\right)^{\frac{1}{2}} ds''ds'ds \nonumber \\ 
	 &\leq \frac{C}{N} \sup_{0 \leq s \leq t} \norm{h(s)}_{L^\infty}^2, 
\end{align}
where we have used $\beta \geq \frac{5}{2}$ and \eqref{k_prop.1}. Firstly, we can further bound $I_{223}$ over $\{\abs{u''} \leq 4N\}$ as 
\begin{align} \label{I_223.2} 
	&I_{223}\textbf{1}_{\{\abs{u''}\leq 4N\}}  \notag\\ 
	&\leq C \int_0^t \int_0^s  \int_0^{s'} \int_{\abs{u}\leq 2N} \int_{\abs{u'}\leq 3N} 
	e^{-\frac{\nu_0}{2}(t-s'')} \abs{k_w(V_{cl}(s),u)k_w(V'_{cl}(s'),u')}  \notag\\
	&\quad \times \norm{h(s')}_{L^\infty} 
	\left(\int_{\abs{u''}\leq 4N} (1+\abs{u''})^{-4\beta+4} \abs{h(s'',X'_{cl}(s''),u'')}^2 du'' \right)^{\frac{1}{2}}du'duds''ds'ds. 
\end{align}
Due to $(1+\abs{u''})^{-4\beta+4} \in L^1(\R^3)\cap L^2(\R^3)$ for $\beta \geq 5/2$, we can further bound \eqref{I_223.2} by using similar arguments \eqref{G_f decay split.2(n)}, \eqref{B,R}, \eqref{I_222.2} and \eqref{entropy.1(n)} as 
\begin{align} \label{I_223.3} 
 	I_{223}\textbf{1}_{\{\abs{u''} \leq 4N \}} \leq C_{\O} \left(\delta +\frac{1}{\sqrt{N}}\right) 
	\sup_{0\leq s \leq t} \norm{h(s)}_{L^\infty} ^2 + C_{N,\delta,\O}A_0^2.
\end{align}
Here the details are omitted for brevity.
By \eqref{I_223.1}, \eqref{I_223.2} and \eqref{I_223.3}, it holds that  
\begin{align} \label{I_223}
	I_{223} \leq  C_{\O} \left(\delta +\frac{1}{\sqrt{N}} \right) \sup_{0\leq s \leq t} \norm{h(s)}_{L^\infty}^2 +
	C_{N,\delta,\O} A_0^2.
\end{align}
In sum, combining \eqref{L_inf_split.2(n)}, \eqref{I_22.1}, \eqref{L_inf_split.3(n)}, \eqref{I_221}, \eqref{I_222} and \eqref{I_223}, we obtain 
\begin{align} \label{I_22}
	I_{22} \leq C e^{-\frac{\nu_0}{4} t} \norm{h_0}_{L^\infty}+ C_{\O} \left(\delta+\frac{1}{\sqrt{N}}\right) \sup_{0\leq s \leq t}\norm{h(s)}_{L^\infty}^2 + C_{N,\delta,\O} A_0.
\end{align}
Next, we estimate $I_{23}$ in \eqref{L_inf_esti.3} to complete estimates on $I_2$ in \eqref{L_inf_esti.1}.
Similarly, we firstly divide the integration domain of $I_{23}$ in \eqref{L_inf_esti.3} as 
\begin{align} \label{L_inf_split.4(n)}
	I_{23}= \int_0^t \int_0^s \int_{\abs{u} \leq 2N} \int_{\abs{u'} \geq 3N} + \int_0^t\int_0^s \int_{\abs{u} \leq 2N} \int_{\abs{u'} \leq 3N}.
\end{align}
Note that \eqref{new kernel} and \eqref{new kernel_0} imply that there is a suitably small $C_2=C_2(\varpi)>0$ such that 
\begin{equation} \label{k_prop.3(n)}
	\int_{\R^3} \abs{\tilde{k}_2(u,u')} e^{C_2\abs{u-u'}^2} \, du' \leq C, 
\end{equation}
where $C$ is a generic constant. 
In the domain $\{\abs{u}\leq 2N,\abs{u'}\geq 3N\}$, it follows from \eqref{k_prop.3(n)} that 
\begin{align} \label{k_prop.4(n)} 
	\int_{\abs{u'} \geq 3N} \abs{\tilde{k}_2(u,u')} du' \leq e^{-C_2 N^2} \int_{\abs{u'}\geq 3N} \abs{\tilde{k}_2(u,u')}e^{C_2\abs{u-u'}^2} \, du' \leq \frac{C}{N}. 
\end{align}
 By \eqref{k_prop.1} and \eqref{k_prop.4(n)}, it is directly deduced that the first part in \eqref{L_inf_split.4(n)} is further bounded by
\begin{align} \label{I_23.1}
	\begin{split}
	 &C\int_0^t \int_0^s \int_{\abs{u}\leq 2N}e^{-\frac{\nu_0}{2}(t-s')}\abs{k_w(V_{cl}(s),u)} 
	\int_{\abs{u'}\geq 3N} \abs{\tilde{k}_2(V'_{cl}(s'),u')h^2(s',X'_{cl}(s'),u')}du'duds'ds \\
	&\leq \frac{C}{N}  \sup_{0\leq s \leq t} \norm{h(s)}_{L^\infty}^2.
	\end{split}
\end{align} 
By \eqref{L_inf_esti.1}, the second part in \eqref{L_inf_split.4(n)} can be further bounded by
{\footnotesize\begin{align} \label{L_inf_split.5(n)}
	 &C\int_0^t \int_0^s \int_{\abs{u}\leq 2N}e^{-\frac{\nu_0}{2}(t-s')}\abs{k_w(V_{cl}(s),u)} 
	\int_{\abs{u'}\leq 3N} \abs{\tilde{k}_2(V'_{cl}(s'),u')h^2(s',X'_{cl}(s'),u')}du'duds'ds \notag\\
	&\leq C\norm{h_0}_{L^\infty}^2 \int_0^t \int_0^s\int_{\abs{u}\leq 2N}  
	e^{-\frac{\nu_0}{2} t} \abs{k_w(V_{cl}(s),u)}  \int_{\abs{u'}\leq 3N}
	 \abs{\tilde{k}_2(V'_{cl}(s'),u')}du'  \, duds' ds\notag\\
	&\quad+C \int_0^t \int_0^s \int_{\abs{u}\leq 2N} 
	e^{-\frac{\nu_0}{2}(t-s')} \abs{k_w(V_{cl}(s),u)} \notag\\
	&\quad \quad \times \int_{\abs{u'}\leq 3N} \abs{\tilde{k}_2(V'_{cl}(s'),u')} \left[\int_0^{s'} 
	e^{-\frac{\nu_0}{2}(s'-s'')}\int_{\R^3} \abs{k_w(V'_{cl}(s''),u'')h(s'',X'_{cl}(s''),u'')} du''ds''\right]^2du' 
	duds'ds\notag\\ 
	&\quad+C \int_0^t \int_0^s \int_{\abs{u}\leq 2N} 
	e^{-\frac{\nu_0}{2}(t-s')} \abs{k_w(V_{cl}(s),u)}\notag\\
	& \quad \quad \times   \int_{\abs{u'}\leq 3N} \abs{\tilde{k}_2(V'_{cl}(s'),u')} 
	\int_0^{s'}e^{-\frac{\nu_0}{2}(s'-s'')} \norm{h(s'')}_{L^\infty}^2 \int_{\R^3} (1+\abs{u''})^{-4\beta+4} \abs{h(s'',X'_{cl}(s''),u'')}^2 du''ds''du' duds'ds\notag\\
	&:=I_{231}+I_{232}+I_{233}.
\end{align}} 
From \eqref{k_prop.3(n)}, we have 
\begin{align} \label{k_prop.5(n)}
	\int_{\R^3} \abs{\tilde{k}_2(V'_{cl}(s'),u')} \, du' \leq C.
\end{align}
It is direct to verify that 
\begin{align} \label{I_231}
	I_{231} \leq C e^{-\frac{\nu_0}{4} t} \norm{h_0}_{L^\infty}^2,
\end{align}
where we have used \eqref{k_prop.1} and \eqref{k_prop.5(n)}. We split the velocity integration into $\{\abs{u''}\geq 4N\} \cup \{\abs{u''}\leq 4N\}$ and we can further bound $I_{232}$ over $\{\abs{u''} \geq 4N\}$  as 
\begin{align} \label{I_232.1}
	I_{232} \textbf{1}_{\{\abs{u''} \geq 4N \}} \leq \frac{C}{N}\sup_{0\leq s \leq t} \norm{h(s)}_{L^\infty}^2, 
\end{align}
from \eqref{k_prop.1(n)}, \eqref{k_prop.1} and \eqref{k_prop.5(n)}. We apply H\"{o}lder's inequality to $I_{232}$ over $\{\abs{u''}\leq 4N\}$ to obtain
{\footnotesize\begin{align} \label{I_232.2}
	&I_{232}\textbf{1}_{\{\abs{u''} \leq 4N\}} \nonumber \\
	&\leq C\int_0^t \int_0^s \int_{\abs{u}\leq 2N} 
	e^{-\frac{\nu_0}{2}(t-s')} \abs{k_w(V_{cl}(s),u)}
	\int_{\abs{u'}\leq 3N} \abs{\tilde{k}_2(V'_{cl}(s'),u')} \nonumber \\
	& \quad\times\left[\int_0^{s'} 
	e^{-\frac{\nu_0}{2}(s'-s'')}\left(\int_{\abs{u''}\leq 4N} \abs{k_w(V'_{cl}(s''),u'')}du''\right)^{\frac{1}{2}}
	\left(\int_{\abs{u''}\leq 4N} \abs{k_w(V'_{cl}(s''),u'')}\abs{h(s'',X'_{cl}(s''),u'')}^2 du''\right)^{\frac{1}{2}}ds''\right]^2
	 du' \nonumber \\
	&\leq C \int_0^t \int_0^s\int_0^{s'} \int_{\abs{u}\leq 2N} e^{-\frac{\nu_0}{2}(t-s'')} 
	 \abs{k_w(V_{cl}(s),u)}  \int_{\abs{u'}\leq 3N} \abs{\tilde{k}_2 (V'_{cl}(s'),u')} 
	 \int_{\abs{u''}\leq 4N} \abs{k_w(V'_{cl}(s'),u'')} \abs{h(s'',X_{cl}'(s''),u'')}^2,  
\end{align}}
where we have used \eqref{k_prop.1}. Notice that we have 
\begin{align} \label{k_prop.6(n)}
	\int_{\R^3} \abs{\tilde{k}_2(u,u')}^2 du' \leq C, 
\end{align}
similar as \eqref{k_prop.5(n)} which comes from \eqref{new kernel} and \eqref{new kernel_0}. We have $\tilde{k}_2 \in L^1(\R^3)\cap L^2(\R^3)$ such as the kernel $k_w$ by \eqref{k_prop.5(n)} and \eqref{k_prop.6(n)}. By using similar arguments  \eqref{G_f decay split.2(n)}, \eqref{B,R}, \eqref{I_222.2} and \eqref{entropy.1(n)}, it holds that
\begin{align} \label{I_232.3} 
	 &C \int_0^t \int_0^s\int_0^{s'} \int_{\abs{u}\leq 2N} e^{-\frac{\nu_0}{2}(t-s'')} 
	 \abs{k_w(V_{cl}(s),u)} \notag\\ 	 &\qquad\qquad\qquad\qquad\qquad\qquad\times\int_{\abs{u'}\leq 3N} \abs{\tilde{k}_2 (V'_{cl}(s'),u')} 
	 \int_{\abs{u''}\leq 4N} \abs{k_w(V'_{cl}(s'),u'')} \abs{h(s'',X_{cl}'(s''),u'')}^2 \nonumber \\
	 &\leq C_{\O} \left(\delta + \frac{1}{\sqrt{N}}\right) \sup_{0\leq s \leq t}
	\norm{h(s)}_{L^\infty}^2 + C_{N,\delta,\O} A_0^2.
\end{align}
From \eqref{I_232.1}, \eqref{I_232.2} and \eqref{I_232.3}, we have the following estimate 
\begin{align} \label{I_232}
	I_{232} \leq C_{\O}\left(\delta+\frac{1}{\sqrt{N}}\right) \sup_{0\leq s \leq t}\norm{h(s)}_{L^\infty}^2+C_{N,\delta,\O}A_0^2.
\end{align}
Likewise, we split 
\begin{align} \label{L_inf_split.6(n)}
	I_{233}=I_{233} \textbf{1}_{\abs{u''}\geq 4N} + I_{233} \textbf{1}_{\abs{u''}\leq 4N}. 
\end{align}
We deduce from \eqref{k_prop.1(n)} and \eqref{k_prop.5(n)} that 
\begin{align} \label{I_233.1} 
	I_{233} \textbf{1}_{\abs{u''}\geq 4N} \leq \frac{C}{N}  \sup_{0\leq s \leq t} \norm{h(s)}_{L^\infty}^4,  
\end{align}
where $\beta \geq 5/2$ has been used. For $I_{233} \textbf{1}_{\abs{u''}\leq 4N}$ in \eqref{L_inf_split.6(n)}, similarly as for treating $I_{223}$ before, it holds that 
\begin{align} \label{I_233.2}
	I_{233} \textbf{1}_{\abs{u''}\leq 4N} \leq C_{\O} \left(\delta+\frac{1}{\sqrt{N}}\right) \sup_{0\leq s \leq t} \norm{h(s)}_{L^\infty}^4 +C_{N,\delta,\O}A_0.
\end{align}
Thus, it follows from \eqref{I_233.1} and \eqref{I_233.2} that 
\begin{align} \label{I_233}
	I_{233} \leq C_{\O} \left(\delta+\frac{1}{\sqrt{N}}\right) \sup_{0\leq s \leq t} \norm{h(s)}_{L^\infty}^4 + C_{N,\delta,\O}A_0.
\end{align}
Combining \eqref{L_inf_split.4(n)}, \eqref{I_23.1}, \eqref{L_inf_split.5(n)}, \eqref{I_231}, \eqref{I_232} and \eqref{I_233} yields that
\begin{align} \label{I_23}
	I_{23} \leq Ce^{-\frac{\nu_0}{4}t} \norm{h_0}_{L^\infty}^2+C_{\O} \left(\delta +\frac{1}{\sqrt{N}}\right) \sup_{0\leq s \leq t} \norm{h(s)}_{L^\infty}^4 + C_{N,\delta,\O} A_0.
\end{align}
In summary, combining \eqref{I_2.1}, \eqref{L_inf_esti.3}, \eqref{L_inf_esti.4}, \eqref{I_22} and \eqref{I_23}, we obtain the estimate on $I_2$ in \eqref{L_inf_split.1(n)} by
\begin{align}
	\begin{split} \label{L_inf_esti.1(n)}
	I_2 
	\leq Ce^{-\frac{\nu_0}{4}t}[\norm{h_0}_{L^\infty}+\norm{h_0}_{L^\infty}^2]+C_{\O} \left(\delta+\frac{1}{\sqrt{N}}\right) \sup_{0\leq s \leq t}\norm{h(s)}_{L^\infty}^4 + C_{N,\delta,\O} A_0.
	\end{split}
\end{align}

\noindent\underline{\textit{Estimate  on $I_3$}}:  Likewise, we split the velocity $u$-integration of $I_3$ in \eqref{L_inf_esti.1} into 
$\{\abs{u}\leq 2N\} \cup \{ \abs{u} \geq 2N\}$: 
\begin{align} \label{L_inf_split.7(n)}
	\begin{split}
	C\int_0^t e^{-\frac{\nu_0}{2} (t-s)} \frac{\norm{h(s)}_{L^\infty}}{1+\abs{v}} 
	\bigg(\int_{\R ^3} (1+\abs{u})^{-4\beta+4} \abs{h(s,X_{cl}(s),u)}^2 \, &du\bigg)^{\frac{1}{2}}\, ds 
	=\int_0^t\int_{\abs{u}\geq 2N} + \int_0^t \int_{\abs{u}\leq 2N}. 
	\end{split}
\end{align}
By a direct computation, the $\{|u| \geq 2N\}$ part in \eqref{L_inf_split.7(n)} can be controlled by  
\begin{align} \label{L_inf_esti.2(n)}
	\begin{split}
	C\int_0^t e^{-\frac{\nu_0}{2} (t-s)} \frac{\norm{h(s)}_{L^\infty}}{1+\abs{v}} 
	\bigg(\int_{\abs{u}\geq 2N} (1+\abs{u})^{-4\beta+4} \abs{h(s,X_{cl}(s),u)}^2 \,  du \bigg)^{\frac{1}{2}} \, ds
	\leq \frac{C}{N} \sup_{0\leq s \leq t} \norm{h(s)}_{L^\infty}^2, 
	\end{split}
\end{align}	
where $\beta \geq 5/2$ has been used. Applying \eqref{L_inf_esti.3} to $h(s,X_{cl}(s),u)$ in the second term of \eqref{L_inf_split.7(n)} gives 
\begin{align} \label{L_inf_split.8(n)}
	&C\int_0^t e^{-\frac{\nu_0}{2} (t-s)} \frac{\norm{h(s)}_{L^\infty}}{1+\abs{v}} 
	\left(\int_{\abs{u}\leq 2N} (1+\abs{u})^{-4\beta+4} \abs{h(s,X_{cl}(s),u)}^2 \, du\right)^{\frac{1}{2}}\, ds \nonumber \\
	&\leq C\norm{h_0}_{L^\infty} \int_0^t e^{-\frac{\nu_0}{2} t}\norm{h(s)}_{L^\infty}
	\left(\int_{\abs{u}\leq 2N} (1+\abs{u})^{-4\beta+4} du\right)^{\frac{1}{2}}ds\nonumber \\
	&\quad+C \int_0^t e^{-\frac{\nu_0}{2}(t-s)} \norm{h(s)}_{L^\infty}\nonumber\\
	&\quad \quad \times \left(\int_{\abs{u}\leq 2N} (1+\abs{u})^{-4\beta+4} \left[\int_0^s e^{-\frac{\nu_0}{2}(s-s')}\int_{\R^3} \abs{k_w(V'_{cl}(s'),u')h(s',X'_{cl}(s'),u')}du'ds' \right]^2 du\right)^{\frac{1}{2}}ds \nonumber\\
	&\quad+C \int_0^t e^{-\frac{\nu_0}{2}(t-s)} \norm{h(s)}_{L^\infty} \nonumber\\
	&\quad \quad \times \left(\int_{\abs{u}\leq 2N} (1+\abs{u})^{-4\beta+4} \left[\int_0^s e^{-\frac{\nu_0}{2}(s-s')}\int_{\R^3}
	\abs{\tilde{k}_2(V'_{cl}(s'),u')h^2(s',X'_{cl}(s'),u')}du'ds' \right]^2 du\right)^{\frac{1}{2}}ds \nonumber \\
	&:=I_{31}+I_{32}+I_{33}. 
\end{align}
Firstly, for $I_{31}$, it holds that 
\begin{align} \label{I_31}
	I_{31} \leq C e^{-\frac{\nu_0}{4}t}\norm{h_0}_{L^\infty} \int_0^t \norm{h(s)}_{L^\infty} ds ,
\end{align}
where $\beta \geq 5/2$ has been used. Secondly, we divide the velocity $u$-integration domain into $\{\abs{u'}\geq 3N\} \cup \{\abs{u'}\leq 3N\}$ so as to get
\begin{align} \label{L_inf_split.9(n)} 
	I_{32} = \int_0^t \int_0^s \int_{\abs{u}\leq 2N}\int_{\abs{u'}\geq 3N}+ \int_0^t \int_0^s \int_{\abs{u}\leq 2N}\int_{\abs{u'}\leq 3N}. 
\end{align}
It holds from \eqref{k_prop.1(n)} that 
\begin{align} \label{I_32.1} 
	I_{32} \textbf{1}_{\abs{u'} \geq 3N} \leq \frac{C}{N}  \sup_{0\leq s \leq t} \norm{h(s)}_{L^\infty}^2. 
\end{align}
For the second part in \eqref{L_inf_split.9(n)}, we apply the Duhamel formula \eqref{Duhamel}. Then, using \eqref{gamma_gain 1} in Lemma \ref{Gamma+} and Lemma \ref{G_f decay} yields that 
\begin{align} \label{L_inf_split.10(n)}
	&I_{32}\textbf{1}_{\abs{u'}\leq 3N} \notag\\
	&\leq C\int_0^t e^{-\frac{\nu_0}{2}(t-s)}\norm{h(s)}_{L^\infty} \notag\\
	&\quad \quad \times \left(\int_{\abs{u}\leq 2N} (1+\abs{u})^{-4\beta+4} \left[ e^{-\frac{\nu_0}{2}s}\norm{h_0}_{L^\infty}\int_0^s 
	\int_{\abs{u'}\leq 3N} \abs{k_w(V'_{cl}(s'),u')} du'ds' \right]^2 du \right)^{\frac{1}{2}} ds\notag\\
	&\quad +C\int_0^t e^{-\frac{\nu_0}{2}(t-s)}\norm{h(s)}_{L^\infty}  
	 \left(\int_{\abs{u}\leq 2N} (1+\abs{u})^{-4\beta+4}  \right. \notag\\
	 &\left. \quad \quad \times \left[\int_0^s e^{-\frac{\nu_0}{2}(s-s')}
	\int_{\abs{u'}\leq 3N}\abs{k_w(V'_{cl}(s'),u')} \int_0^{s'} e^{-\frac{\nu_0}{2}(s'-s'')}\int_{\R^3}\abs{k_w(V'_{cl}(s''),u'')h(s'',X'_{cl}(s''),u'')}  \right]^2 \right)^{\frac{1}{2}}\notag\\
	&\quad +C\int_0^t e^{-\frac{\nu_0}{2}(t-s)}\norm{h(s)}_{L^\infty}  
	 \left(\int_{\abs{u}\leq 2N} (1+\abs{u})^{-4\beta+4}  \left[\int_0^s e^{-\frac{\nu_0}{2}(s-s')} \right. \right. \notag\\
	&\left. \left. \quad \quad \times \int_{\abs{u'}\leq 3N}\abs{k_w(V'_{cl}(s'),u')}\int_0^{s'} e^{-\frac{\nu_0}{2}(s'-s'')}\norm{h(s'')}_{L^\infty}\left(\int_{\R^3}(1+\abs{u''})^{-4\beta+4}\abs{h(s'',X'_{cl}(s''),u'')}^2\right)^{\frac{1}{2}}  \right]^2 \right)^{\frac{1}{2}}\notag\\
	&:=I_{321}+I_{322}+I_{323}. 
\end{align}
It can be directly deduced from \eqref{k_prop.2(n)} and $\beta \geq 5/2$ that
\begin{align} \label{I_321}
	I_{321} \leq C e^{-\frac{\nu_0}{4} t}\norm{h_0}_{L^\infty} \int_0^t \norm{h(s)}_{L^\infty} ds. 
\end{align}
We divide the velocity $u''$-integration region in $I_{322}$ into $\{\abs{u''} \geq 4N\} \cup \{\abs{u''}\leq 4N\}$. Likewise, in the region $\{\abs{u''} \geq 4N \}$, we can further bound $I_{322}$ as  
\begin{align} \label{I_322.1}
	I_{322} \textbf{1}_{\abs{u''} \geq 4N} \leq \frac{C}{N}\sup_{0\leq s \leq t} \norm{h(s)}_{L^\infty}^2,
\end{align}
where \eqref{k_prop.1(n)} and $\beta \geq 5/2$ have been used. We will concentrate on $I_{322}$ in the region $\{\abs{u''}\leq 4N\}$. The H\"{o}lder's inequality and \eqref{k_prop.1} yield that
\begin{align} \label{Holder}
	&\int_{\abs{u''}\leq 4N} \abs{k_w(V'_{cl}(s'),u'')h(s'',X'_{cl}(s''),u'')} du'' \notag\\
	&\leq 
	\left(\int_{\abs{u''}\leq 4N}  \abs{k_w(V'_{cl}(s'),u'')} du'' \right)^{\frac{1}{2}} 
	\left(\int_{\abs{u''}\leq 4N}  \abs{k_w(V'_{cl}(s'),u'')h^2(s'',X'_{cl}(s''),u'')} du'' \right)^{\frac{1}{2}}\notag\\
	&\leq C \left(\int_{\abs{u''}\leq 4N}  \abs{k_w(V'_{cl}(s'),u'')h^2(s'',X'_{cl}(s''),u'')} du'' \right)^{\frac{1}{2}},
\end{align}
from which we can further bound $I_{322}\textbf{1}_{\abs{u''}\leq 4N}$ from \eqref{k_prop.1} as 
{\footnotesize\begin{align} \label{I_322.2}
	\begin{split}
	&I_{322}\textbf{1}_{\abs{u''}\leq 4N} \\
	&\leq C \int_0^t e^{-\frac{\nu_0}{2}(t-s)}\norm{h(s)}_{L^\infty}  \\
	 &\quad \times \left(\int_{\abs{u}\leq 2N} (1+\abs{u})^{-4\beta+4} \left[\int_0^s \int_0^{s'} e^{-\frac{\nu_0}{2}(s-s'')}
	 \left(\int_{\abs{u'}\leq 3N,\abs{u''}\leq 4N}\abs{k_w(V'_{cl}(s'),u')k_w(V'_{cl}(s''),u'')}du'' du'\right)^{\frac{1}{2}}
	  \right. \right. \\
	  &\left. \left. \quad \times 
	\left(\int_{\abs{u'}\leq 3N,\abs{u''} \leq 4N}
	\abs{k_w(V'_{cl}(s'),u')k_w(V'_{cl}(s''),u'')h^2(s'',X'_{cl}(s''),u'')}du''du'\right)^{\frac{1}{2}}ds''ds' \right]^2 du\right)^{\frac{1}{2}}ds \\
	&\leq C\int_0^t e^{-\frac{\nu_0}{2}(t-s)}\norm{h(s)}_{L^\infty}  \\
         &\quad \times \left(\int_{\abs{u}\leq 2N} (1+\abs{u})^{-4\beta+4} \int_0^s \int_0^{s'} e^{-\nu_0(s-s'')}
	\int_{\abs{u'}\leq 3N,\abs{u''}\leq 4N}\abs{k_w(V'_{cl}(s'),u')k_w(V'_{cl}(s''),u'')h^2(s'',X'_{cl}(s''),u'')}\right)^{\frac{1}{2}}\\
	&\leq C\sup_{0\leq s \leq t} \norm{h(s)}_{L^\infty}\\
	&\quad \times \left(\int_0^t \int_0^s \int_0^{s'} e^{-\nu_0(t-s'')}\int_{\abs{u}\leq 2N,\abs{u'}\leq 3N,\abs{u''}\leq 4N} 
	(1+\abs{u})^{-4\beta+4}\abs{k_w(V'_{cl}(s'),u')k_w(V'_{cl}(s''),u'')h^2(s'',X'_{cl}(s''),u'')}\right)^{\frac{1}{2}}.
	\end{split}
\end{align}}
We treat the last term in \eqref{I_322.2}  as we have dealt with $I_{223}$. One then has
\begin{align} \label{I_322.3}
	I_{322}\textbf{1}_{\abs{u''}\leq 4N} \leq C_{\O} \left (\sqrt{\delta} + \frac{1}{\sqrt{N}} \right) \sup_{0 \leq s \leq t} \norm{h(s)}_{L^\infty}^2 + C_{N,\delta,\O} A_0^2.
\end{align} 
In sum, from \eqref{I_322.1}, \eqref{I_322.2} and \eqref{I_322.3}, we have the following estimate 
\begin{equation} \label{I_322}
	I_{322} \leq C_{\O} \left (\sqrt{\delta} + \frac{1}{\sqrt{N}} \right) \sup_{0 \leq s \leq t} \norm{h(s)}_{L^\infty}^2 + C_{N,\delta,\O} A_0^2.
\end{equation}
Similarly, we split the velocity $u''$-integration domain in  $I_{323}$, and then we can further bound $I_{323}$ in the region $\{\abs{u''}\geq 4N\}$ as 
\begin{align} \label{I_323.1}
	I_{323}\textbf{1}_{\abs{u''}\geq 4N} \leq \frac{C}{N}  \sup_{0\leq s \leq t} \norm{h(s)}_{L^\infty}^3, 
\end{align}
where $\beta \geq 5/2$ and \eqref{k_prop.1} have been used. It remains to estimate $I_{323}$ in the domain $\{\abs{u''}\leq 4N\}$ for $I_{32}$. Similar as \eqref{I_322.2}, it follows from \eqref{k_prop.1} that 
\begin{align}\label{I_323.2}
	&I_{323} \textbf{1}_{\abs{u''}\leq 4N} \notag\\
	&\leq C\sup_{0\leq s \leq t}\norm{h(s)}_{L^\infty}^2 \int_0^t e^{-\frac{\nu_0}{2}(t-s)} 
	 \left(\int_{\abs{u}\leq 2N} (1+\abs{u})^{-4\beta+4}  \left[ \int_0^s \int_0^{s'}e^{-\frac{\nu_0}{2}(s-s'')} \right. \right.  \notag\\
	&\left. \left. \quad \times \left (\int_{\abs{u'}\leq 3N}\abs{k_w(V'_{cl}(s'),u')}
	\int_{\abs{u''}\leq 4N}(1+\abs{u''})^{-4\beta+4}\abs{h(s'',X'_{cl}(s''),u'')}^2 \right)^{\frac{1}{2}} \right]^2\right)^{\frac{1}{2}} \notag\\
	&\leq C \sup_{0\leq s \leq t} \norm{h(s)}_{L^\infty}^2 \Big(\int_0^t\int_0^s \int_0^{s'} e^{-\nu_0(t-s'')}\int_{\abs{u}\leq 2N,\abs{u'}\leq 3N,\abs{u''}\leq 4N} 
	(1+\abs{u})^{-4\beta+4}\notag \\
&\qquad\qquad\qquad\qquad\qquad\qquad\qquad\qquad\abs{k_w(V'_{cl}(s'),u')}(1+\abs{u''})^{-4\beta+4} \abs{h(s'',X'_{cl}(s''),u'')}^2 \Big)^{\frac{1}{2}}.
\end{align}
Since $(1+\abs{u''})^{-4\beta+4} \in L^1(\R^3) \cap L^2(\R^3)$ for $\beta \geq 5/2$, we can deal with the last term in \eqref{I_323.2} as  similar arguments in $I_{322}$. Then, we can deduce that
\begin{align} \label{I_323.3}
	I_{323} \textbf{1}_{\abs{u''}\leq 3N} \leq C_{\O}\left(\sqrt{\delta}+\frac{1}{\sqrt{N}}\right)\sup_{0\leq s \leq t}\norm{h(s)}_{L^\infty}^3+C_{N,\delta,\O} A_0.
\end{align} 
By \eqref{I_323.1},\eqref{I_323.2} and \eqref{I_323.3}, we have 
\begin{align} \label{I_323}
	I_{323} \leq C_{\O}\left(\sqrt{\delta}+\frac{1}{\sqrt{N}}\right) \sup_{0\leq s \leq t} \norm{h(s)}_{L^\infty}^3 + C_{N,\delta,\O} A_0.
\end{align}
In summary, we can obtain from \eqref{L_inf_split.9(n)}, \eqref{I_32.1}, \eqref{L_inf_split.10(n)}, \eqref{I_321},\eqref{I_322} and \eqref{I_323} that 
\begin{align} \label{I_32}
	I_{32} \leq C e^{-\frac{\nu_0}{2}t}\norm{h_0}_{L^\infty} \int_0^t \norm{h(s)}_{L^\infty} ds +
	C_{\O}\left(\sqrt{\delta}+\frac{1}{\sqrt{N}}\right) \sup_{0\leq s \leq t} \norm{h(s)}_{L^\infty}^3 + C_{N,\delta,\O} A_0. 
\end{align}
Finally, we split the velocity $u'$-integration domain into $\{\abs{u'}\geq 3N\}\cup \{\abs{u'}\leq 3N\}$, and then it follows from \eqref{k_prop.4(n)} that 
\begin{align} \label{I_33.1}
	I_{33}\textbf{1}_{\abs{u'}\geq 3N} \leq \frac{C}{N}\sup_{0\leq s \leq t}\norm{h(s)}_{L^\infty}^3. 
\end{align}
Plugging the estimate \eqref{L_inf_esti.1} into $I_{33}\textbf{1}_{\abs{u'}\leq 3N}$, we get 
{\footnotesize\begin{align} \label{L_inf_split.11(n)}
	&I_{33}\textbf{1}_{\abs{u'}\leq 3N} \notag\\
	&\leq C \int_0^t e^{-\frac{\nu_0}{2} (t-s)} \norm{h(s)}_{L^\infty}\notag\\ 
	&\quad \quad \times \left(\int_{\abs{u}\leq 2N} (1+\abs{u})^{-4\beta+4} \left[\int_0^s e^{-\frac{\nu_0}{2}s}\norm{h_0}_{L^\infty}^2
	\int_{\abs{u'}\leq 3N} \abs{\tilde{k}_2(V'_{cl}(s'),u')} du'ds'\right]^2du\right)^{\frac{1}{2}}ds\notag\\
	&\quad +C\int_0^t e^{-\frac{\nu_0}{2} (t-s)} \norm{h(s)}_{L^\infty} \left(\int_{\abs{u}\leq 2N}
	(1+\abs{u})^{-4\beta+4} \right.\notag\\
	&\left. \quad \quad \times\left[ \int_0^s e^{-\frac{\nu_0}{2}(s-s')}\int_{\abs{u'}\leq 3N} \abs{\tilde{k}_2(V'_{cl}(s'),u')} \left \{\int_0^{s'}e^{-\frac{\nu_0}{2}(s'-s'')}
	\int_{\R^3} \abs{k_w(V'_{cl}(s''),u'')h(s'',X'_{cl}(s''),u'')} \right \}^2 \right]^2 \right)^{\frac{1}{2}}\notag\\
	&\quad + C \int_0^t e^{-\frac{\nu_0}{2} (t-s)} \norm{h(s)}_{L^\infty} \left(\int_{\abs{u}\leq 2N}
	(1+\abs{u})^{-4\beta+4}\left[ \int_0^s e^{-\frac{\nu_0}{2}(s-s')}\right. \right.\notag\\
	&\left. \left. 
	\quad \quad \times \int_{\abs{u'}\leq 3N} \abs{\tilde{k}_2(V'_{cl}(s'),u')} \left \{\int_0^{s'}e^{-\frac{\nu_0}{2}(s'-s'')}
	\norm{h(s'')}_{L^\infty} \left(\int_{\R^3} (1+\abs{u''})^{-4\beta+4}\abs{h^2(s'',X'_{cl}(s''),u'')} \right)^{\frac{1}{2}}
	 \right \}^2\right]^2\right)^{\frac{1}{2}}\notag\\
	&:=I_{331}+I_{332}+I_{333}. 
\end{align}}
By \eqref{k_prop.5(n)}, we have 
\begin{align} \label{I_331}
	I_{331} \leq Ce^{-\frac{\nu_0}{4}t}\norm{h_0}_{L^\infty}^2  \int_0^t \norm{h(s)}_{L^\infty} ds . 
\end{align}
As before, we divide $I_{332}$ into two cases $\{\abs{u''}\geq 4N\}\cup \{\abs{u''} \leq 4N\}$. The first case can be further bounded by
\begin{align} \label{I_332.1}
	I_{332} \textbf{1}_{\abs{u''} \geq 4N} \leq \frac{C}{N}\sup_{0\leq s \leq t} \norm{h(s)}_{L^\infty}^3, 
\end{align}
where we have used \eqref{k_prop.1(n)},\eqref{k_prop.5(n)} and $\beta \geq 5/2$. Similar to the arguments \eqref{Holder} and \eqref{I_322.2}, we apply the H\"{o}lder's inequality and then it holds that 
\begin{align} \label{I_332.2}
	&I_{332}\textbf{1}_{\abs{u''} \leq 4N} \notag\\
	&\leq C\int_0^t e^{-\frac{\nu_0}{2} (t-s)} \norm{h(s)}_{L^\infty} \left(\int_{\abs{u}\leq 2N}
	(1+\abs{u})^{-4\beta+4} \right.\notag\\
	&\left. \quad \times\left[ \int_0^s e^{-\frac{\nu_0}{2}(s-s')}\int_{\abs{u'}\leq 3N} \abs{\tilde{k}_2(V'_{cl}(s'),u')} \int_0^{s'}e^{-\nu_0(s'-s'')}
	\int_{\abs{u''}\leq 4N} \abs{k_w(V'_{cl}(s''),u'')h^2(s'',X'_{cl}(s''),u'')}  \right]^2 \right)^{\frac{1}{2}}\notag\\
	&\leq C\sup_{0\leq s \leq t} \norm{h(s)}_{L^\infty}^2 \Big(\int_0^t \int_0^s \int_0^{s'} e^{-\nu_0(t-s'')}\int _{\abs{u}\leq 2N,\abs{u'}\leq 3N,\abs{u''}\leq 4N}
	(1+\abs{u})^{-4\beta+4} \notag\\
&\qquad\qquad\qquad\qquad\qquad\qquad\qquad\qquad\qquad\qquad\abs{\tilde{k}_2(V'_{cl}(s'),u')k_w(V'_{cl}(s''),u'')h^2(s'',X'_{cl}(s''),u'')}\Big)^{\frac{1}{2}}.
\end{align}	
Similar to the way of treating $I_{233}$ before, it follows that 
\begin{align} \label{I_332.3}
	I_{332}\textbf{1}_{\abs{u''}\leq 3N} \leq C_{\O}\left(\sqrt{\delta}+\frac{1}{\sqrt{N}}\right) \sup_{0\leq s \leq t} \norm{h(s)}_{L^\infty}^3 + C_{N,\delta,\O}A_0.
\end{align}
Applying the above estimates \eqref{I_332.1},\eqref{I_332.2} and \eqref{I_332.3}, one then has   
\begin{align} \label{I_332}
	I_{332} \leq C_{\O} \left(\sqrt{\delta}+\frac{1}{\sqrt{N}}\right) \sup_{0\leq s \leq t}\norm{h(s)}_{L^\infty}^3+
	C_{N,\delta,\O} A_0.
\end{align}
Likewise, for $I_{333}$, we split the velocity $u''$-integration domain into $\{\abs{u''}\geq 4N\} \cup \{\abs{u''}\leq 4N\}$. We have the estimate for $I_{333}$ in $\{\abs{u''}\geq 4N\}$
\begin{align} \label{I_333.1}
	I_{333}\textbf{1}_{\abs{u''}\geq 4N} \leq \frac{C}{N}\sup_{0\leq s \leq t}
	\norm{h(s)}_{L^\infty}^5, 
\end{align}
due to $\beta \geq 5/2$ and \eqref{k_prop.5(n)}. Similar to the way \eqref{I_322.2}, it holds that
{\footnotesize\begin{align} \label{I_333.2}
	&I_{333}\textbf{1}_{\abs{u''}\leq 4N} \nonumber \\ 
	&\leq C \sup_{0\leq s \leq t} \norm{h(s)}_{L^\infty}^3  \int_0^t e^{-\frac{\nu_0}{2} (t-s)} \nonumber \\
	&\quad \times \left(\int_{\abs{u}\leq 2N}
	(1+\abs{u})^{-4\beta+4}\left[ \int_0^s \int_0^{s'}e^{-\frac{\nu_0}{2}(s-s'')} \int_{\abs{u'}\leq 3N} \abs{\tilde{k}_2(V'_{cl}(s'),u')}	 
	\int_{\abs{u''}\leq 4N} (1+\abs{u''})^{-4\beta+4}\abs{h^2(s'',X'_{cl}(s''),u'')} 
	 \right]^2\right)^{\frac{1}{2}} \nonumber  \\ 
	&\leq C \sup_{0\leq s \leq t} \norm{h(s)}_{L^\infty}^3  \int_0^t e^{-\frac{\nu_0}{2} (t-s)} \nonumber \\ 
	& \quad  \times \left( \int_{\abs{u} \leq 2N} (1+\abs{u})^{-4\beta+4} \left[  \int_0^s\int_0^{s'} e^{-\frac{\nu_0}{2}(s-s'')} 
	\left(\int_{\abs{u'}\leq 3N,\abs{u''}\leq 4N} \abs{\tilde{k}_2(V'_{cl}(s'),u')}(1+\abs{u''})^{-4\beta+4}\right)^{\frac{1}{2}} \right. \right. \nonumber \\
	&\left. \left. \hspace{5cm} \times \left(\int_{\abs{u'}\leq 3N,\abs{u''}\leq 4N} 
	\abs{\tilde{k}_2(V'_{cl}(s'),u')}(1+\abs{u''})^{-4\beta+4}\abs{h^4(s'',X_{cl}'(s''),u'')}\right)^{\frac{1}{2}}
	 \right]^2  \right)^{\frac{1}{2}}  \nonumber \\
	&\leq C\sup_{0\leq s \leq t} \norm{h(s)}_{L^\infty}^3  \int_0^t e^{-\frac{\nu_0}{2} (t-s)} \nonumber \\
	&\quad \times \left(\int_{\abs{u}\leq 2N} (1+\abs{u})^{-4\beta+4} 
	\int_0^s \int_0^{s'} e^{-\nu_0(s-s'')}\int_{\abs{u'}\leq 3N,\abs{u''}\leq 4N} \abs{\tilde{k}_2(V'_{cl}(s'),u')} (1+\abs{u''})^{-4\beta+4} \abs{h^4(s'',X_{cl}'(s''),u'')} \right)^{\frac{1}{2}} \nonumber \\ 
	&\leq C \sup_{0\leq s \leq t} \norm{h(s)}_{L^\infty}^4 \nonumber \\
	&\quad  \times \left( \underbrace{\int_0^t\int_0^s \int_0^{s'} e^{-\nu_0(t-s'')}
	\int_{\abs{u}\leq 2N,\abs{u'}\leq 3N,\abs{u''}\leq 4N} (1+\abs{u})^{-4\beta+4} \abs{\tilde{k}_2(V'_{cl}(s'),u')}
	(1+\abs{u''})^{-4\beta+4} \abs{h^2(s'',X'_{cl}(s''),u'')}}_{(\ref{I_333.2})_{MAIN}}\right)^{\frac{1}{2}},
\end{align}}
by using \eqref{k_prop.5(n)} and $\beta \geq 5/2$. For applying the Proposition \ref{prop_COV}, the integration $(\ref{I_333.2})_{MAIN}$ is rewritten by 
\begin{align} \label{L_inf_split.12(n)}
	&(\ref{I_333.2})_{MAIN} \notag\\
	&\leq  \sum_{n=1}^{[t / \delta]} \sum_{\abs{\vec{n}} \leq N} \sum_{m=1}^{[ t/\delta]} \sum_{\abs{\vec{m}} \leq N} 
		\sum_{k}^{K_{i_s}} \sum_{k'}^{K'_{j_{s,s'}}}\int_{(n-1)\delta}^{(n+1)\delta} \int_{t^{k+1}+\delta} ^{t^k-\delta} \int_{t^{k'+1}+\delta}^{t^{k'}-\delta} e^{-{\nu_0}(t-s'')} \notag\\ 
		&\quad \times \int_{\abs{u}\leq 2N, \abs{u'}\leq 3N, \abs{u''}\leq 4N}  (1+\abs{u})^{-4\beta+4}\abs{\tilde{k}_2(V'_{cl}(s'),u')}(1+\abs{u''})^{-4\beta+4}\abs{h^2(s'',X'_{cl}(s''),u'')}  \textbf{1}_{R_1^{c} \cap \cdots \cap R_6^{c}}\notag\\
		& \qquad+ \mathcal{B}_{2} + \mathcal{R}_{2},
\end{align}
where the $\mathcal{B}_{2}$ term corresponds to where the trajectory is near bouncing points and $\mathcal{R}_{2}$ corresponds to where $(u,s',u',s'')$ is in one of $R_1$ through $R_6$. So we have the following small estimates for $\mathcal{B}_{2}$ and $\mathcal{R}_{2}$: 
{\footnotesize\begin{align} \label{B,R_2}
		\mathcal{B}_{2} &\leq \int_0^t \int_0^s \int_0^{s'}e^{-\nu_0(t-s'')} \int_{\abs{u}\leq 2N}  \int_{\abs{u'} \leq 3N} 
		(1+\abs{u})^{-4\beta+4}\abs{\tilde{k}_2(V'_{cl}(s'),u')} \notag\\ 
		& \quad\times \int_{\abs{u''}\leq 4N} (1+\abs{u''})^{-4\beta+4}\abs{h^2(s'',X'_{cl}(s''),u'')} 
		\textbf{1}_{\{t^{k+1} \leq s' \leq t^{k+1}+\delta\} \cup \{t^k-\delta \leq s' \leq t^k\}} 
		\textbf{1}_{\{t^{k'+1} \leq s'' \leq t^{k'+1}+\delta\} \cup \{t^{k'}-\delta \leq s'' \leq t^{k'} \}} \notag\\
		&\leq C\delta \sup_{0 \leq s \leq t} \norm{h(s)}_{L^\infty}^2, \notag\\ 
		\mathcal{R}_{2} &\leq  \int_0^t \int_0^s\int_0^{s'} e^{-\nu_0(t-s'')} \int_{\abs{u}\leq 2N}  \int_{\abs{u'}\leq 3N} (1+\abs{u})^{-4\beta+4}\abs{\tilde{k}_2(V'_{cl}(s'),u')} \notag\\
		 & \quad \times \int_{\abs{u''} \leq 4N} (1+\abs{u''})^{-4\beta+4}\abs{h^2(s'',X'_{cl}(s''),u'')} \textbf{1}_{R_1 \cup R_2 \cup \cdots \cup R_6}\notag\\ 
		&\leq C \delta \sup_{0\leq s \leq t} \norm{h(s)}_{L^\infty}^2. 
\end{align}}
Using H\"{o}lder's inequality and \eqref{k_prop.6(n)} yields that the remaining part in \eqref{L_inf_split.12(n)} can be further bounded by
{\footnotesize\begin{align}  \label{I_333.3}
		&\sum_{n=1}^{[t / \delta]} \sum_{\abs{\vec{n}} \leq N} \sum_{m=1}^{[ t/\delta]} \sum_{\abs{\vec{m}} \leq N} 
		\sum_{k}^{K_{i_s}} \sum_{k'}^{K'_{j_{s,s'}}}\int_{(n-1)\delta}^{(n+1)\delta} \int_{t^{k+1}+\delta} ^{t^k-\delta} \int_{t^{k'+1}+\delta}^{t^{k'}-\delta} e^{-{\nu_0}(t-s'')}\notag \\&
		\quad \times \int_{\abs{u}\leq 2N, \abs{u'}\leq 3N, \abs{u''}\leq 4N}  (1+\abs{u})^{-4\beta+4}\abs{\tilde{k}_2(V'_{cl}(s'),u')}(1+\abs{u''})^{-4\beta+4}\abs{h^2(s'',X'_{cl}(s''),u'')}  \textbf{1}_{R_1^{c} \cap \cdots \cap R_6^{c}}\notag\\
		&\leq \sum_{n=1}^{[t / \delta]} \sum_{\abs{\vec{n}} \leq N} \sum_{m=1}^{[ t/\delta]} \sum_{\abs{\vec{m}} \leq N} 
		\sum_{k}^{K_{i_s}} \sum_{k'}^{K'_{j_{s,s'}}}\int_{(n-1)\delta}^{(n+1)\delta} \int_{t^{k+1}+\delta} ^{t^k-\delta} \int_{t^{k'+1}+\delta}^{t^{k'}-\delta} e^{-{\nu_0}(t-s'')} \notag\\
		&\quad \times \left(\int_{\abs{u}\leq 2N,\abs{u'}\leq 3N, \abs{u''}\leq 4N} (1+\abs{u})^{-8\beta+8}
		\abs{\tilde{k}_2(V'_{cl}(s'),u')}^2(1+\abs{u''})^{-8\beta+8} \right)^{1/2} \notag\\ 
		&\quad \times \left(\int_{\abs{u}\leq 2N,\abs{u'}\leq 3N, \abs{u''}\leq 4N} \abs{h^4(s'',X'_{cl}(s''),u'')}\textbf{1}_{R_1^{c} \cap \cdots \cap R_6^{c}}\right)^{1/2}\notag\\
		&\leq C \sum_{n=1}^{[t / \delta]} \sum_{\abs{\vec{n}} \leq N} \sum_{m=1}^{[ t/\delta]} \sum_{\abs{\vec{m}} \leq N} 
		\sum_{k}^{K_{i_s}} \sum_{k'}^{K'_{j_{s,s'}}}\int_{(n-1)\delta}^{(n+1)\delta} \int_{t^{k+1}+\delta} ^{t^k-\delta} \int_{t^{k'+1}+\delta}^{t^{k'}-\delta} e^{-{\nu_0}(t-s'')} \notag\\
		&\times \left( \int_{u'' \in \R^3} \int_{\hat{u}_2,\xi_3,\xi_4} \textbf{1}_{\{ \abs{u}\leq 2N, \abs{u'}\leq 3N, \abs{u''}\leq 4N \}} \; d\hat{u}_2d\xi_3 d\xi_4
	  \right)^{\frac{1}{2}}
	  \left ( \int_{\abs{u'},\xi_1,\xi_2} \abs{h(s'',X'_{cl}(s''),u'')}^4  \; d\abs{u'}d\xi_1d\xi_2
	   \right) ^{\frac{1}{2}} ds''ds'ds \notag\\ 
	   &\leq C_{N,\delta,\O} \sum_{n=1}^{[t/\delta]} \sum_{\abs{\vec{n}} \leq N} \sum_{m=1}^{[t/\delta]} \sum_k^{K_{i_s}} \sum_{k'}^{K'_{j_{s,s'}}}
	    \int_{(n-1)\delta}^{(n+1)\delta}\int_{t^{k+1}+\delta}^{t^k-\delta} \int_{t^{k'+1}+\delta} ^{t^{k'}-\delta}  e^{-\nu_0(t-s'')}\notag\\
	    &\times \left(\int_{\abs{u''}\leq 4N } \int_{B(X(s''),r_{\delta,n,\vec{n},i,k,m,\vec{m},j,k'})} \abs{h(s'',y,u'')}^4 
	    \frac{1}{\epsilon'_{\O,N,\delta}}  \; dydu'' \right)^{\frac{1}{2}}\notag\\
	    &\leq C_{N,\delta,\O}\sup_{0\leq s \leq t} \norm{h(s)}_{L^\infty} \int_0^t \int_0^s\int_0^{s'} e^{-\nu_0(t-s'')} \left( \int_{y \in \O} \int_{\abs{u''}\leq 4N} \abs{h(s'',y,u'')}^2 \; du'' dy\right)^{\frac{1}{2}} \; ds''ds'ds.
\end{align}}
We can obtain that
\begin{align} \label{I_333.4}
	I_{333} \textbf{1}_{\abs{u''}\leq 4N} \leq C_{\O}\left(\sqrt{\delta}+\frac{1}{\sqrt{N}}\right)\sup_{0\leq s \leq t}\norm{h(s)}_{L^\infty}^5+C_{N,\delta,\O} A_0,
\end{align} 
due to \eqref{entropy.1(n)}, \eqref{I_333.2}, \eqref{L_inf_split.12(n)}, \eqref{B,R_2} and \eqref{I_333.3}. Then, it follows from \eqref{I_333.1}, \eqref{I_333.2} and \eqref{I_333.4} that 
\begin{align} \label{I_333} 
	I_{333} \leq C_{\O}\left(\sqrt{\delta}+\frac{1}{\sqrt{N}}\right) \sup_{0\leq s \leq t} \norm{h(s)}_{L^\infty}^5 + C_{N,\delta,\O} A_0.
\end{align}
Combining \eqref{I_33.1}, \eqref{L_inf_split.11(n)}, \eqref{I_331}, \eqref{I_332} and \eqref{I_333}, one obtains that 
\begin{align} \label{I_33} 
	I_{33} \leq Ce^{-\frac{\nu_0}{4}t}\norm{h_0}_{L^\infty}^2  \int_0^t \norm{h(s)}_{L^\infty} ds+C_{\O}\left(\sqrt{\delta}+\frac{1}{\sqrt{N}}\right) \sup_{0\leq s \leq t} \norm{h(s)}_{L^\infty}^5+
	C_{N,\delta,\O} A_0.
\end{align}
It follows from \eqref{L_inf_esti.2(n)}, \eqref{L_inf_split.8(n)}, \eqref{I_31}, \eqref{I_32} and \eqref{I_33} that \eqref{L_inf_split.7(n)} is bounded as 
\begin{align} \label{L_inf_esti.3(n)}
	\begin{split}
	I_3
	\leq Ce^{-\frac{\nu_0}{4}t}\int_0^t \norm{h(s)}_{L^\infty} ds \left[ \norm{h_0}_{L^\infty}+\norm{h_0}_{L^\infty}^2 \right]+C_{\O}\left(\sqrt{\delta}+\frac{1}{\sqrt{N}}\right) \sup_{0\leq s \leq t} \norm{h(s)}_{L^\infty}^5+C_{N,\delta,\O} A_0.
	\end{split}
\end{align}
Consequently, plugging \eqref{I_2,I_3}, \eqref{L_inf_esti.1(n)}, \eqref{L_inf_esti.3(n)} into \eqref{L_inf_esti.1} establishes the following estimate
\begin{align} \label{L_inf_esti.4(n)}
	\abs{h(t,x,v)} &\leq Ce^{-\frac{\nu_0}{4}t} \left(\int_0^t \norm{h(s)}_{L^\infty} +1 \right)\left[\norm{h_0}_{L^\infty}+\norm{h_0}_{L^\infty}^2 \right]\notag\\
&\quad	 +C_{\O}\left(\delta+\frac{1}{\sqrt{N}}\right)\sup_{0\leq s \leq t }\norm{h(s)}_{L^\infty}^5 + C_{N,\delta,\O}A_0. 
\end{align}
The desired estimate \eqref{L_inf_esti.result} follows by taking the $L^\infty_{x,v}$ norm to the above estimate \eqref{L_inf_esti.4(n)}. We complete the proof of Lemma \ref{L_inf_esti}.
\end{proof}

\section{Proof of the global existence}\label{sec4}

Before giving the proof of Theorem \ref{main thm}, we recall the following known important result in \cite{KL} for the global existence of solutions of small amplitude in $L^\infty$ setting.

\begin{proposition} \label{LeeKim}
Let $w(v):= (1+\rho^2 \abs{v}^2)^\beta e^{\varpi\abs{v}^2}$ for $\rho>0$, $\beta> 5/2$, and $0\leq\varpi \leq 1/64$. Assume that $\O$ is a $C^3$ uniformly convex bounded domain and $F_0(x,v)=\mu(v)+\sqrt{\mu(v)}f_0(x,v)\geq 0$ satisfies the mass-energy conservations
\begin{align*}
	\int_{\O}\int_{\R^3} \sqrt{\mu(v)}f_0(x,v) \,dvdx = 0, \quad \int_{\O}\int_{\R^3} \abs{v}^2 \sqrt{\mu(v)}f_0(x,v)\, dvdx=0,
\end{align*}
as well as the angular-momentum conservation \eqref{angular conserv} in addition if $\O$ has rotational symmetry \eqref{rot sym}. 
Then, there exists $\delta_1>0$ such that if $\norm{wf_0}_{L^\infty} \leq \delta_1$, there exists a unique global solution $F(t,x,v)=\mu(v)+\sqrt{\mu(v)}f(t,x,v)$ to the Boltzmann equation \eqref{def.be} with the specular boundary condition \eqref{specular} such that
\begin{align*}
	\norm{wf(t)}_{L^\infty}\leq C\norm{wf_0}_{L^\infty} e^{-\lambda t},
\end{align*}	
for all $t\geq 0$, where $C>0$ is a generic constant and $\lambda=\lambda(\O)>0$. Moreover, if $f_0(x,v)$ is continuous except on $\gamma_0$ and satisfies initial-boundary compatibility condition,
\[
	f_0(x,v) = f_0(x, R_x v),\quad \forall x\in\p\O,
\] 
then $f(t,x,v)$ is also continuous in $[0,\infty)\times \{\overline{\O}\times \R^{3}\backslash \gamma_0\}$. 
\end{proposition}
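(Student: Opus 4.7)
The plan is to execute the classical $L^2$--$L^\infty$ bootstrap of Guo in the specular setting, upgraded with the triple Duhamel iteration of Kim--Lee to handle the nondegeneracy failure intrinsic to general $C^3$ convex domains. First I would construct a local-in-time solution $f$ in the weighted $L^\infty$ topology via a linearized iteration scheme (the same scheme will be made precise for the large-data problem in Section \ref{sec5}); a continuation argument then reduces the proposition to an a priori estimate of the form $\|wf(t)\|_{L^\infty}\le C\|wf_0\|_{L^\infty}e^{-\lambda t}$, together with the nonnegativity of $F$, which propagates from the mild formulation.

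The $L^2$ step is standard. Testing $\partial_t f+v\cdot\nabla_x f+Lf=\Gamma(f,f)$ against $f$, the boundary term $\int_{\partial\Omega}\int_{\R^3}|f|^2(v\cdot n)\,dvdS_x$ vanishes under specular reflection because $R_x$ is measure-preserving and reverses the sign of $v\cdot n$. Coercivity of $L$ on $\ip f$, combined with the mass/energy (and, when \eqref{rot sym} holds, angular-momentum) conservations \eqref{conservation}--\eqref{angular conserv} that annihilate the macroscopic part $\P f$, yields full dissipation $(Lf,f)\gtrsim \|f\|_\nu^2$, modulo lower-order couplings absorbed by a standard interpolation. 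Since $(\Gamma(f,f),f)\lesssim \|wf\|_{L^\infty}\|f\|_\nu^2$, smallness of $\|wf_0\|_{L^\infty}$ produces exponential $L^2$ decay $\|f(t)\|_{L^2}\le C\|f_0\|_{L^2}e^{-\lambda t}$.

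For the $L^\infty$ step, Duhamel's formula along the specular trajectory gives
\[
h(t,x,v)=e^{-\nu(v)t}h_0(X_{cl}(0),V_{cl}(0))+\int_0^t e^{-\nu(v)(t-s)}\left[K_wh+w\Gamma(f,f)\right](s,X_{cl}(s),V_{cl}(s))\,ds.
\]
Since $w\Gamma(f,f)\lesssim \langle v\rangle^\kappa\|wf\|_{L^\infty}^2$, the nonlinear contribution is absorbed by smallness of $\|wf\|_{L^\infty}$. The task is to dominate the $K_w h$ integral by $\|f(\cdot)\|_{L^2}$. In the periodic case one iterates Duhamel once more and changes variables $u\mapsto X'_{cl}(s')$, but under specular reflection the Jacobian $\det(\partial_u X'_{cl})$ degenerates. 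Following the Kim--Lee scheme, I would therefore iterate Duhamel \emph{thrice}, write $K_w$ as its integral kernel each time, and split the $(u,s',u',s'')$ region as in the proof of Lemma \ref{L_inf_esti}: a contribution from near-bouncing times $\{|s'-t^k|\lesssim\delta\}\cup\{|s''-t^{k'}|\lesssim\delta\}$, a contribution from the exceptional sets $R_1,\dots,R_6$ of Proposition \ref{prop_COV}, both giving $O(\delta+1/\sqrt N)\cdot\|h\|_{L^\infty}^{\text{const}}$, and the bulk term. On the bulk term, Proposition \ref{prop_COV} supplies two coordinates $\{\xi_1,\xi_2\}\subset\{|u|,\hat u_1,\hat u_1',\hat u_2'\}$ making $(|u'|,\xi_1,\xi_2)\mapsto X''_{cl}(s'')$ one-to-one with Jacobian bounded below by $\epsilon'_{\Omega,N,\delta,\delta_1,\delta_2}>0$. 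A H\"older inequality together with the Lemma \ref{Lemma_k} kernel bounds then changes variables from velocity to the spatial ball $B(X''_{cl}(s''),r)\subset\Omega$, reducing the triple integral to $\|f(s'')\|_{L^2_{x,v}}$, whose exponential decay from Step~2 closes the estimate.

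The main obstacle is precisely the triple-change-of-variables argument: unlike the periodic or diffuse-reflection settings, specular trajectories fold and accumulate near the grazing set $\gamma_0$, so the Jacobian analysis must carefully track the bouncing points $t^k,t^{k'}$, discard the negligible near-tangential and exceptional sets, and select the admissible coordinate pair $\{\xi_1,\xi_2\}$ locally via the geometric decomposition of $\partial_{|u|}X$ and $\partial_{\hat u_1}X$. Once the a priori decay of $\|wf(t)\|_{L^\infty}$ is in hand, positivity of $F$ follows from rewriting the equation as a Vidav-type mild formulation with nonnegative gain, and the stated continuity off $\gamma_0$ follows because specular reflection preserves continuity across each bounce when the compatibility condition $f_0(x,v)=f_0(x,R_xv)$ holds on $\partial\Omega$.
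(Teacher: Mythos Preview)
The paper does not prove Proposition~\ref{LeeKim} at all; it is stated as a known result from \cite{KL} and merely recalled in Section~\ref{sec4} as a black box on which the large-amplitude theorem rests. So there is no ``paper's own proof'' to compare against---your sketch is effectively an outline of the Kim--Lee argument itself, and at that level it is essentially correct: local existence, $L^2$ decay via the energy method with specular cancellation of the boundary term, then the $L^\infty$ bootstrap using the triple Duhamel iteration and the nondegeneracy condition of Proposition~\ref{prop_COV} to pass from the velocity integral to a spatial integral controlled by $\|f\|_{L^2}$.

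One point where your sketch is too quick: the sentence ``conservations \ldots\ that annihilate the macroscopic part $\P f$, yields full dissipation $(Lf,f)\gtrsim\|f\|_\nu^2$'' conflates two distinct steps. The conservation laws only kill the \emph{global} hydrodynamic moments; they do not by themselves give pointwise or even $L^2$ control of $\P f$. What is actually needed is the separate coercivity/macroscopic estimate $\|\P f\|_{L^2}\lesssim\|\ip f\|_\nu$ (plus boundary contributions), obtained in \cite{Guo,KL} via test-function or hyperbolic-system arguments that use the conservation laws only to rule out nontrivial kernel elements. This is a genuine ingredient of the small-data theory, not a consequence of coercivity of $L$ alone, and in the specular case it is where the rotational-symmetry hypothesis \eqref{rot sym}--\eqref{angular conserv} enters nontrivially.
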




With the help of the above result, we are ready to apply Lemma \ref{L_inf_esti} to give the  

\medskip
\noindent\textbf{Proof of Theorem \ref{main thm}.} We first continue to focus on the {\it a priori} $L^\infty$ estimate of solutions starting from \eqref{L_inf_esti.result}. Recall that  
 \begin{align*}
 	 \sup_{0\leq t \leq T_0} \norm{h(t)}_{L^\infty} \leq \bar{M} \quad \textrm{and} \quad \mathcal{E}(F_0) \leq \epsilon_1=\epsilon_1(\bar{M},T_0).
 \end{align*}
Here, $\bar{M}$ and $T_0$ will be explicitly determined in terms of $M_0$ later. By Lemma \ref{L_inf_esti}, it holds that 
 \begin{align} \label{L_inf_h}
 	\norm{h(t)}_{L^\infty} \leq Ce^{-\frac{\nu_0}{4}t} \left(1+ \int_0^t \norm{h(s)}_{L^\infty} \right)[M_0+M_0^2] +D, 
 \end{align}
 for all $0\leq t \leq T_0$, where we have denoted the small part $D$ by
 \begin{align} \label{def_D}
 	D:= C_{\O}\left(\delta+\frac{1}{\sqrt{N}}\right)\bar{M}^5 + C_{N,\delta,\O} [\sqrt{\mathcal{E}(F_0)}+\mathcal{E}(F_0)].
 \end{align}
Define 
\begin{align*}
	G(t):=1+ \int_0^t \norm{h(s)}_{L^\infty} \,ds.
\end{align*}
By using $G(t)$ above, we can rewrite \eqref{L_inf_h} as  
\begin{align} \label{G_1}
	G'(t) \leq Ce^{-\frac{\nu_0}{4}t }(M_0+M_0^2)G(t)+D.
\end{align}
By multiplying an integrating factor, one then has 
\begin{align*}
	\left(G(t)\exp\left\{-\frac{\nu_0}{4}C[M_0+M_0^2](1-e^{-\frac{\nu_0}{4}t}) \right\}\right)' \leq D, 
\end{align*}
for all $0\leq t \leq T_0$. Taking the integration over $[0,t]$, we get 
\begin{align} \label{G_2}
	G(t)\leq (1+Dt) \exp\left\{ \frac{\nu_0}{4}C [M_0+M_0^2]\left(1-e^{-\frac{\nu_0}{4}t}\right)\right\} \leq (1+Dt) \exp\left\{ \frac{\nu_0}{4}C [M_0+M_0^2]\right\}.
\end{align}	
Defining 
\begin{equation}
\label{def.bM}
\bar{M} := 4C(M_0+M_0^2) \exp\left\{ \frac{4}{\nu_0}C [M_0+M_0^2]\right\},
\end{equation}
and inserting \eqref{G_2} into \eqref{G_1} yield that
\begin{align} \label{main_bound_1}
	\norm{h(t)}_{L^\infty} &\leq C\left(M_0+M_0^2\right) \exp\left \{\frac{\nu_0}{4}C[M_0+M_0^2]\right\}\left(1+Dt\right)e^{-\frac{\nu_0}{4}t}+D\notag \\
	&\leq \frac{1}{4}\bar{M}\left(1+Dt\right) e^{-\frac{\nu_0}{4}t}+D\notag \\
	&\leq \frac{1}{4}\bar{M}\left(1+\frac{8}{\nu_0}D\right)e^{-\frac{\nu_0}{8}t}+D, 
\end{align}
for all $0\leq t \leq T_0$. Recall the definition of $D$ in \eqref{def_D}. First  we choose sufficiently small $\delta>0$ and large $N$, both depending on $\bar{M}$. Since $\bar{M}$ has been written as a function of $M_0$ in \eqref{def.bM}, both $\delta$ and $N$ can be chosen to depend only on $M_{0}$. Then we take $\epsilon_1>0$ sufficiently small, depending only on $M_0$ similarly, so that 
\begin{align} \label{D}
	D\leq \min\left \{\frac{\nu_0}{16},\frac{\delta_1}{8},\frac{\bar{M}}{8} \right\},
\end{align}
where $\delta_1>0$ is introduced in Proposition \ref{LeeKim}. Hence, it follows from \eqref{main_bound_1} that 
\begin{align} \label{main_bound_2}
	\norm{h(t)}_{L^\infty} \leq \frac{3}{8}\bar{M}e^{-\frac{\nu_0}{8}t} +\frac{1}{8}\bar{M}\leq \frac{1}{2} \bar{M},
\end{align}
for $0\leq t \leq T_0$. Therefore, we have shown that 
\begin{align}\label{ad.dp1}
	\sup_{0\leq t\leq T_0}\norm{h(t)}_{L^\infty} \leq \frac{1}{2}\bar{M},
\end{align}	
provided that the {\it a priori} assumption \eqref{a priori assumption} holds true and also $\mathcal{E}(F_0) \leq \epsilon_1$ is satisfied.

Next, our goal is to extend the local-in-time solution of the Boltzmann equation up to the time $T_0$ and check that 
\begin{equation*} 
	\norm{h(T_0)}_{L^\infty} < \delta_1,
\end{equation*}
which allows that the solution can be further extended from $[0,T_0]$ to $[0,\infty)$ by applying Proposition \ref{LeeKim}; see Figure \ref{fig1}. Indeed, in terms of Lemma \ref{local},  there exists a time $\hat{t}_0>0$ such that the solution $f(t,x,v)$ of the Boltzmann equation exists for $0\leq t \leq \hat{t}_0$ and satisfies
\begin{align*}
	\sup_{0\leq t \leq \hat{t}_0} \norm{wf(t)}_{L^\infty} \leq 2 \norm{wf_0}_{L^\infty} \leq \frac{1}{2} \bar{M}.
\end{align*}
Considering $\hat{t}_0$ as the initial time, one can also extend the solution $f(t,x,v)$ for $0\leq t \leq \hat{t}_0+\tilde{t}$ for some $\tilde{t}>0$ and satisfies 
\begin{align*}
	\sup_{\hat{t}_0\leq t \leq \hat{t}_0+\tilde{t}} \norm{wf(t)}_{L^\infty} \leq 2 \norm{wf(\hat{t}_0)}_{L^\infty} \leq \bar{M}.
\end{align*}
Thus, the solution satisfies the {\it a priori} assumption \eqref{a priori assumption} for $0\leq t \leq \hat{t}_0+\tilde{t}$, which leads in terms of \eqref{ad.dp1} to 
\begin{align*}
	\sup_{0\leq t \leq \hat{t}_0+\tilde{t}} \norm{h(t)}_{L^\infty} \leq \frac{1}{2} \bar{M}. 
\end{align*}
Define 
\begin{align*}
	T_0:= \frac{8}{\nu_0}\left[ \ln \bar{M}+ \left \vert \ln \delta_1 \right \vert \right ],
\end{align*}
and then one can obtain the existence of the solution for $0\leq t \leq T_0$ via this process repeatedly. Moreover, it follows from \eqref{main_bound_1} and \eqref{D} that 
\begin{align*}
	\norm{wf(T_0)}_{L^\infty} \leq \frac{3}{8} \bar{M} e^{-\frac{\nu_0}{8}T_0} + \frac{\delta_1}{8}  < \delta_1.
\end{align*}
Therefore, we prove the global-in-time existence and uniqueness of solutions further with the help of Proposition \ref{LeeKim}. It remains to show an exponential decay in time of $\norm{wf(t)}_{L^\infty}$. For $t\geq T_0$, applying Proposition \ref{LeeKim} yields that 
\begin{align} \label{main_bound_3}
	\norm{h(t)}_{L^\infty} \leq C \norm{h(T_0)}_{L^\infty}e^{-\lambda(t-T_0)}\leq C\delta_1 e^{-\lambda(t-T_0)}.
\end{align}	
Taking $\vartheta:= \min\{\frac{\nu_0}{8}, \lambda\}$, it holds that 
\begin{align*}
	\norm{h(t)}_{L^\infty} \leq C \bar{M} e^{-\vartheta t}\leq C(M_0+M_0^2)\exp \left\{\frac{4}{\nu_0}C[M_0+M_0^2]\right\}e^{-\vartheta t},
\end{align*}
for all $t\geq 0$, where we have used \eqref{main_bound_2} and \eqref{main_bound_3}. Then, \eqref{thm.td} follows. If $f_0(x,v)$ is continuous except on $\gamma_0$ and satisfies the specular boundary condition, the continuity of $f(t,x,v)$ on $[0,\infty) \times \{ \O \times \R^3\backslash \gamma_0\}$ comes from Lemma \ref{local}. Therefore, we complete the proof of Theorem \ref{main thm}.\qed

\section{Appendix: Local-in-time existence}\label{sec5}

In this section, for the initial-boundary value problem \eqref{def.be}, \eqref{id} and \eqref{specular}, we consider the proof of existence of local-in-time solutions in $L^\infty$ as long as $\|wf_0\|_{L^\infty}$ is finite. From the lemma below, we notice that $\rho>0$ can be generally chosen independent of $\|wf_0\|_{L^\infty}$, the parameter $\varpi$ in the exponential part of the weight function can be zero, and the constant coefficient on the right-hand side of the estimate \eqref{lem.lx} does not involve the parameter $\rho$.

\begin{lemma} \label{local}
Let $\rho>0$, $\beta >5/2$ and $0\leq \varpi \leq \frac{1}{64}$ be fixed in the weight function \eqref{weight}. Under the conditions that $F_0(x,v)= \mu(v) + \sqrt{\mu(v)} f_0(x,v)\geq 0$ and $\norm{wf_0}_{L^\infty} <\infty$, there exists a time $\hat{t}_0>0$ such that the initial-boundary value problem \eqref{def.be} and \eqref{id} with specular reflection boundary condition \eqref{specular} admits a unique solution $F(t,x,v)=\mu(v)+\sqrt{\mu(v)} f(t,x,v)\geq 0$ for $t \in [0,\hat{t}_0]$, satisfying
\begin{align}\label{lem.lx}
	\sup_{0\leq t \leq \hat{t}_0} \norm{wf(t)}_{L^\infty} \leq 2 \norm{wf_0}_{L^\infty}. 
\end{align}
Moreover, if $f_0(x,v)$ is continuous except on $\gamma_0$ and satisfies initial-boundary compatibility condition,
\[
	f_0(x,v) = f_0(x, R_x v),\quad \forall x\in\p\O,
\] 
then $f(t,x,v)$ is also continuous in $[0,\hat{t}_0]\times \{\overline{\O}\times \R^{3}\backslash \gamma_0\}$.  
\end{lemma}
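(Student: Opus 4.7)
The plan is to construct the local solution by a standard Picard-type iteration on the quasi-linear reformulation \eqref{reform_B}, using only the trivial bound $0\le e^{-\int_s^t R(F^n)\,d\tau}\le 1$ for the linear solution operator (so the parameter $\rho$ and the lower bound of Lemma \ref{Rf est} play no role). Set $f^0\equiv f_0$, $h^n=wf^n$, $F^n=\mu+\sqrt{\mu}f^n$, and define $f^{n+1}$ to solve
\begin{equation*}
\p_t f^{n+1}+v\cdot\nabla_x f^{n+1}+\mathcal{A}F^n\,f^{n+1}
=Kf^n+\Gamma_+(f^n,f^n),
\end{equation*}
with initial data $f_0$ and the specular boundary condition $f^{n+1}(t,x,v)=f^{n+1}(t,x,R_xv)$ on $\partial\Omega$. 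Equivalently, for $F^{n+1}=\mu+\sqrt{\mu}f^{n+1}$, one can write the transport equation
\begin{equation*}
\p_t F^{n+1}+v\cdot\nabla_x F^{n+1}+R(F^n)F^{n+1}=Q_+(F^n,F^n),
\end{equation*}
which, by Duhamel along the specular characteristic of Section 2, produces a nonnegative $F^{n+1}$ whenever $F^n\geq 0$, since $Q_+\geq 0$ and the exponential factor is nonnegative. Passing to $h^{n+1}=wf^{n+1}$ and applying Duhamel as in \eqref{Duhamel} gives
\begin{equation*}
h^{n+1}(t,x,v)=G^{f^n}(t,0)h_0+\int_0^t G^{f^n}(t,s)\bigl[K_wh^n(s)+w\Gamma_+(f^n,f^n)(s)\bigr]\,ds,
\end{equation*}
where $|G^{f^n}(t,s)|\le 1$ because $R(F^n)\ge 0$ pointwise.

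Set $M:=\|h_0\|_{L^\infty}$ and denote $Y_n(t):=\sup_{0\le s\le t}\|h^n(s)\|_{L^\infty}$. By the standard bound $\int|k_w(v,u)|\,du\le C$ together with the pointwise estimate \eqref{gamma_gain 1} in Lemma \ref{Gamma+} (which gives $|w\Gamma_+(f^n,f^n)|\le C\|h^n\|_{L^\infty}^2$ for $\rho\ge 1$, $\beta\geq 2$, and in fact for every admissible $\rho,\beta$ with a constant independent of $\rho$ as noted there), the Duhamel formula yields
\begin{equation*}
Y_{n+1}(t)\le M+Ct\bigl(Y_n(t)+Y_n(t)^2\bigr).
\end{equation*}
Choosing $\hat t_0>0$ so small that $C\hat t_0(2M+4M^2)\le M$, an easy induction shows $Y_n(\hat t_0)\le 2M$ for every $n$, which will give \eqref{lem.lx} in the limit.

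To obtain convergence, I would run the same argument for $h^{n+1}-h^n$. Writing $\delta f^n=f^{n+1}-f^n$ and $\delta h^n=w\,\delta f^n$, one splits
\begin{equation*}
\bigl[\p_t+v\cdot\nabla_x+R(F^n)\bigr]\delta h^n=K_w\delta h^{n-1}+w\Gamma_+(\delta f^{n-1},f^n)+w\Gamma_+(f^{n-1},\delta f^{n-1})-\sqrt{\mu}\,\delta h^{n-1}\,\mathcal{A}\sqrt{\mu}f^n/w+\ldots,
\end{equation*}
i.e. a linear transport with zero initial data, forced by terms bounded by $C(1+Y_{n}+Y_{n-1})\|\delta h^{n-1}\|_{L^\infty}\le C(1+4M)\|\delta h^{n-1}\|_{L^\infty}$. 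Another Duhamel gives
\begin{equation*}
\sup_{0\le t\le\hat t_0}\|\delta h^n(t)\|_{L^\infty}\le C\hat t_0(1+4M)\sup_{0\le t\le\hat t_0}\|\delta h^{n-1}(t)\|_{L^\infty},
\end{equation*}
so after possibly shrinking $\hat t_0$ further (still depending only on $M$) we obtain a contraction, hence $\{h^n\}$ converges in $L^\infty([0,\hat t_0]\times\Omega\times\R^3)$ to a mild solution $h=wf$ satisfying \eqref{lem.lx}. Uniqueness follows from the same difference estimate applied to two putative solutions.

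For the continuity statement, assuming $f_0\in C(\overline{\Omega}\times\R^3\setminus\gamma_0)$ and $f_0(x,v)=f_0(x,R_xv)$ on $\partial\Omega$, I would check by induction that each $f^n$ is continuous on $[0,\hat t_0]\times\{\overline\Omega\times\R^3\setminus\gamma_0\}$. The continuity of $F^n\mapsto R(F^n)$ and of $Q_+(F^n,F^n)$ in the relevant topology is classical, and the specular characteristic $[X_{cl}(s;t,x,v),V_{cl}(s;t,x,v)]$ is continuous away from $\gamma_0$ by the standard geometric argument for $C^3$ convex $\Omega$ used in \cite{Guo,KL}. The compatibility condition propagates through each iteration because the integral representation is invariant under $v\mapsto R_xv$ for $x\in\partial\Omega$. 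Uniform convergence in $L^\infty$ then transfers continuity from $f^n$ to $f$.

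The main obstacle I foresee is purely bookkeeping rather than conceptual: ensuring that the specular trajectory $[X_{cl},V_{cl}]$ is handled safely for all $(x,v)\notin\gamma_0$ inside the iteration, in particular that the grazing set does not leak continuity, and that the nonnegativity of $F^{n+1}$ is preserved across reflection. Both are standard consequences of the $C^3$ convexity of $\Omega$ and the form of the specular map $R_x$, but they are the places where one must be careful. Apart from that, all the required pointwise estimates on $K_w$ and $\Gamma_+$ are already in Lemmas \ref{Lemma_k}--\ref{Gamma_new}, so no new estimate beyond those used in Section \ref{sec3} is needed.
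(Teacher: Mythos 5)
Your uniform bound step, positivity argument, and continuity argument all match the paper's approach and are fine. However, the contraction step has a genuine gap for $\kappa>0$.

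When you form $\delta h^{n-1}=h^{n+1}-h^n$, the difference of the two transport equations produces the cross term
\[
h^{n}(t,x,v)\int_{\R^3}\int_{\S^2}B(v-u,\omega)\sqrt{\mu(u)}\,\frac{\delta h^{n-1}(u)}{w(u)}\,d\omega du,
\]
coming from $\mathcal{A}F^{n}-\mathcal{A}F^{n-1}$ (equivalently, from $I^{n+1}(t,s)-I^{n}(t,s)$ in the Duhamel form, since the solution operator itself depends on the iterate). This term is bounded only by $C(1+|v|)^{\kappa}\,\|h^{n}\|_{L^\infty}\|\delta h^{n-1}\|_{L^\infty}$ — it carries a collision-frequency growth in $|v|$. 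With the trivial bound $|G^{f^n}(t,s)|\le 1$ that you chose to use, the time integral of this forcing then grows like $t\,(1+|v|)^{\kappa}$ and its $L^\infty_v$ norm diverges; so your claimed bound $C(1+Y_n+Y_{n-1})\|\delta h^{n-1}\|_{L^\infty}$ is not correct. You cannot repair this by invoking $R(F^n)\ge \tfrac12\nu(v)$ either, since that is exactly the $\rho$-dependent lower bound of Lemma \ref{Rf est}, and the local existence lemma is supposed to hold for every fixed $\rho>0$.

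The paper handles this by running the contraction step in a strictly weaker weight, $\hat w(v):=w(v)/\sqrt{1+\rho^2|v|^2}$. Passing to $\hat h^n=\hat w\,f^n$, the term $|I^{n+1}-I^n|$ still contributes a factor $\nu(v)\sim(1+|v|)^\kappa$ (see \eqref{soln_operator}), but it is now dominated by $w(v)/\hat w(v)=\sqrt{1+\rho^2|v|^2}$, since $\kappa\le 1$. This sacrifice of one polynomial power in the contraction norm is precisely what closes the argument, and it costs nothing because the sequence is already uniformly bounded in the original $w$-weight. You should incorporate this weight downgrade; otherwise the contraction does not close for hard potentials.

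A cosmetic difference: the paper initializes the iteration with $F^0\equiv\mu$ (so $h^0\equiv 0$) rather than $f^0\equiv f_0$, which slightly simplifies the base case; this is immaterial.
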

\begin{proof}
For the local existence of solutions to the Boltzmann equation \eqref{def.be}, we use the following iteration 
\begin{align} \label{mild_f.1}
	\begin{split}
		\begin{cases}
		\displaystyle
			\partial_t F^{n+1} +v \cdot \nabla_x F^{n+1} +F^{n+1} \int_{\R^3} \int_{\S^2} B(v-u,\omega) F^n(t,x,u)\, d\omega du =Q_+(F^n,F^n), \\
			F^{n+1}(t,x,v)\Bigr\rvert_{t=0} = F_0(x,v) \geq 0,
		\end{cases} 
	\end{split}
\end{align}
with the specular reflection boundary condition $F^{n+1}(t,x,v)=F^{n+1}(t,x,v-2(n(x)\cdot v)n(x))$ for $x \in \partial\O$ and $F^0(t,x,v) \equiv \mu(v)$. Recall that 
\begin{align*}
	R(f)(t,x,v) := \int_{\R^3} \int_{\S^2} B(v-u,\omega)\left[\mu(u) +\sqrt{\mu(u)}f(t,x,u) \right]\,d\omega du.
\end{align*}
If we set 
\begin{align*} 
	\displaystyle
	f^{n+1}(t,x,v) =\frac{F^{n+1}(t,x,v) - \mu(v)}{\sqrt{\mu(v)}} \quad \textrm{and} \quad h^{n+1}(t,x,v) =w(v) f^{n+1}(t,x,v), 
\end{align*}	
we can rewrite the above iteration as, for $n=0,1,2,\cdots$,
\begin{align} \label{mild_f.2}
	\begin{split}
		\begin{cases}
			\displaystyle
			\partial_t h^{n+1} + v\cdot \nabla_x h^{n+1} + h^{n+1} R(f^n) = K_w h^n + w(v) \Gamma_+(f^n,f^n), \\
			h^{n+1}(t,x,v) \Bigr\rvert_{t=0} = h_0(x,v), 
		\end{cases}
	\end{split}
\end{align} 
with $h^{n+1}(t,x,v)=h^{n+1}(t,x,v-2(n(x)\cdot v)n(x))$ for $x \in \partial\O$ and $h^0(t,x,v) \equiv 0$. By induction on $n$, we can prove that there exists a positive time $\hat{t}_1>0$ such that \eqref{mild_f.1} or \eqref{mild_f.2} has a unique solution over $[0,\hat{t}_1]$, and the following estimate for $h^{n+1}$ and the positivity of $F^{n+1}$ hold:
\begin{align} \label{h_0}
	\norm{h^n(t)}_{L^\infty} \leq 2 \norm{h_0}_{L^\infty},\quad F^n(t,x,v)\geq 0, 
\end{align}
for $0\leq t \leq \hat{t}_1$.  

When we solve \eqref{mild_f.2} with $n=0$, we can directly have 
	$h^{1} (t,x,v) =e^{-\nu(v)t} h_0(X_{cl}(0),V_{cl}(0))$, 
 where we have used $f^0(t,x,v)\equiv 0$. Furthermore, we have 
 	$\norm{h^1(t)}_{L^\infty} \leq 2 \norm{h_0}_{L^\infty}$ 
 for $t \geq 0$. Similarly, for the positivity of $F^1$, we solve \eqref{mild_f.1} to get 
 \begin{align*}
 	F^1(t,x,v) = e^{-\nu(v)t}F_0(X_{cl}(0),V_{cl}(0)) +\int_0^t e^{-\nu(v)(t-s)} \nu(v) \mu(v) ds\geq 0. 
 \end{align*}
 Now, we suppose that \eqref{h_0} holds for $n=0,1,2,\cdots,k$. We consider the same iteration \eqref{mild_f.2} with specular boundary condition for $n=k+1$. We define the linear solution operator $I^{n}(t,s)$ for \eqref{mild_f.2} by 
 \begin{align*}
 	I^n(t,s) &:=  \exp\left\{ -\int_s^t R(f^n)(\tau,X_{cl}(\tau),V_{cl}(\tau) d\tau\right\}\\
	&\,=\exp\left\{ -\int_s^t \int_{\R^3}\int_{\S^2} B(V_{cl}(\tau)-u,\omega)F^n(\tau,X_{cl}(\tau),u)\, d\omega du d\tau \right\}.
 \end{align*}
 Applying the Duhamel principle, one has 
 \begin{align} \label{Duhamel_1}
 	h^{k+1} (t,x,v) = I^{k}(t,0) h_0(X_{cl}(0),V_{cl}(0)) + \int_0^t I^k(t,s)\left[K_wh^k(s) + w(v) \Gamma_+(f^k,f^k)(s) \right]\,ds.
 \end{align}
From Lemma 2.2 and Lemma 2.3, we further bound \eqref{Duhamel_1} as: 
\begin{align*}
	\abs{h^{k+1}(t,x,v)} &\leq \norm{h_0}_{L^\infty} + C \int_0^t \norm{h^k(s)}_{L^\infty} + \norm{h^k(s)}_{L^\infty}^2 \, ds \\ 
	&\leq \norm{h_0}_{L^\infty} + 4Ct \norm{h_0}_{L^\infty}(1+\norm{h_0}_{L^\infty}), 
\end{align*}
where $C$ is some positive constant. Taking $\hat{t}_1:=(4C[1+\norm{h_0}_{L^\infty}])^{-1}$, then we induce 
\begin{align}\label{lem.lx.pa1}
	\norm{h^{k+1}(t)}_{L^\infty} \leq 2 \norm{h_0}_{L^\infty},
\end{align}
for $0\leq t \leq \hat{t}_1$. For the positivity of $F^{k+1}$, we express $F^{k+1}(t,x,v)$ as
\begin{align*}
		F^{k+1}(t,x,v) = I^{k}(t,0)F_0(X_{cl}(0),V_{cl}(0)) + \int_0^t I^k(t,s)Q_+(F^k,F^k)\, ds .
\end{align*} 
By $F^k\geq 0$, we directly deduce $F^{k+1} \geq 0$. 

We need to show that the sequence $\{h^n\}$ is convergent. To obtain the convergence, we introduce a new weight function $\hat{w}(v)= \frac{w(v)}{\sqrt{1+\rho^2 \abs{v}^2}}$. We denote 
\begin{align*}
\hat{h}^{n+1}(t,x,v) := \hat{w}(v) f^{n+1}(t,x,v)=(1+\rho^2 \abs{v}^2)^{-1/2} h^{n+1}(t,x,v), \quad n=0,1,2,\cdots.
\end{align*}
Then, $\hat{h}^{n+1}(t,x,v)$ satisfies 
\begin{align} \label{new h}
	\begin{split}
		\begin{cases}
	\partial_t \hat{h}^{n+1}+ v\cdot \nabla_x \hat{h}^{n+1} + \hat{h}^{n+1} R(f^n) = K_{\hat{w}}\hat{h}^{n} + \hat{w}(v) \Gamma_+(f^n,f^n),\\
	\hat{h}^{n+1}(t,x,v)\Bigr\rvert_{t=0} = \hat{h}_0(x,v). 
		\end{cases}
	\end{split}
\end{align}	
By the Duhamel principle to \eqref{new h}, it follows that 
\begin{align*}
	\hat{h}^{n+1} = I^n(t,0) \hat{h}_0(X_{cl}(0),V_{cl}(0)) + \int_0^t I_n(t,s) \left[ K_{\hat{w}} \hat{h}^n (s) + \hat{w}(v)\Gamma_+(f^n,f^n)(s)\right] \, ds.
\end{align*}
One then has 
\begin{align} \label{local_split.1}
	&\abs{(\hat{h}^{n+2} - \hat{h}^{n+1})(t,x,v)}\notag\\
	 &\leq \abs{I^{n+1}(t,0)-I^n(t,0)}.\abs{\hat{h}_0(X_{cl}(0),V_{cl}(0))}\notag\\ 
 	&\quad+\int_0^t \abs{I^{n+1}(t,s) -I^n(t,s)} \abs{K_{\hat{w}}\hat{h}^{n+1}(s)+\hat{w}(v) \Gamma_+(f^{n+1},f^{n+1})(s)}\,ds\notag\\
	&\quad+\int_0^t \abs{I^n(t,s)} \abs{K_{\hat{w}}\hat{h}^{n+1}(s)+\hat{w}(v) \Gamma_+(f^{n+1},f^{n+1})(s)-K_{\hat{w}}\hat{h}^{n}(s)-\hat{w}(v) \Gamma_+(f^n,f^n)(s)}\, ds\notag\\
	&:= K_1^n +K_2^n +K_3^n. 
\end{align}
For $K_1^n$ and $K_2^n$, we consider $\abs{I^{n+1}(t,s) -I^n(t,s)}$. By direct computations, we can derive 
\begin{align} \label{soln_operator}
	&\abs{I^{n+1}(t,s)-I^n(t,s)} \notag\\
	&=\left\vert \exp\left \{-\int_s^t R(f^{n+1})(\tau,X_{cl}(\tau),V_{cl}(\tau))\,d\tau\right \}-\exp \left\{-\int_s^t R(f^n)(\tau,X_{cl}(\tau),V_{cl}(\tau)) \,d\tau\right \}\right \vert \notag\\
	&\leq \left \vert \int_s^t \left[R(f^{n+1})-R(f^n)\right](\tau,X_{cl}(\tau),V_{cl}(\tau))\,d\tau\right \vert \notag\\
	&\leq C\int_s^t \int_{\R^3}\int_{\S^2} B(V_{cl}(\tau)-u,\omega) \frac{\sqrt{\mu(u)}}{\hat{w}(u)}\abs{(\hat{h}^{n+1}-\hat{h}^n)(\tau,X_{cl}(\tau),u)} \,d\omega dud\tau\notag\\
	&\leq C\nu(v) \int_s^t \norm{(\hat{h}^{n+1}-\hat{h}^n)(\tau)} \,d \tau. 
\end{align}
First consider the first term $K_1^n$ in \eqref{local_split.1}. It follows from \eqref{soln_operator} that 
\begin{align} \label{K_1^n}
		K_1^n &\leq C \nu(v) \abs{\hat{h}_0(X_{cl}(0),V_{cl}(0))}  \int_0^t \norm{(\hat{h}^{n+1}-\hat{h}^n)(\tau)} \, d\tau\notag\\
		&\leq C t\norm{h_0}_{L^\infty}  \sup_{0\leq \tau \leq t} \norm{(\hat{h}^{n+1}-\hat{h}^n)(\tau)}.
\end{align}
For $K_2^n$ in \eqref{local_split.1}, using Lemma \ref{Lemma_nega}, Lemma \ref{Gamma+} and \eqref{soln_operator}, one obtains that 
\begin{align} \label{K_2^n} 
	K_2^n &\leq C \int_0^t \norm{(\hat{h}^{n+1}-\hat{h}^n)(\tau)}_{L^\infty} \, d\tau \int_0^t \nu(v) \abs{K_{\hat{w}}\hat{h}^{n+1}(s)+\hat{w(v)} \Gamma_+(f^{n+1},f^{n+1})(s)} \,ds \notag \\ 
	&\leq C \int_0^t \left[\norm{h^{n+1}(s)}_{L^\infty} + \norm{h^{n+1}(s)}_{L^\infty}^2\right] \, ds \int_0^t \norm{(\hat{h}^{n+1}-\hat{h}^n)(\tau)}_{L^\infty} \, d\tau\notag\\
	&\leq Ct\norm{h_0}_{L^\infty}[4t(1+\norm{h_0}_{L^\infty})] \sup_{0\leq \tau \leq t} \norm{(\hat{h}^{n+1}-\hat{h}^n)(\tau)}_{L^\infty}\notag\\
	&\leq Ct\norm{h_0}_{L^\infty} \sup_{0\leq \tau \leq t} \norm{(\hat{h}^{n+1}-\hat{h}^n)(\tau)}_{L^\infty},
\end{align}
for $0\leq t \leq \hat{t}_1$. Finally, it suffices to establish the estimate for $K_3^n$ in \eqref{local_split.1}. Notice that 
\begin{align*} 
		&\abs{K_{\hat{w}}\hat{h}^{n+1}(s)+\hat{w}(v) \Gamma_+(f^{n+1},f^{n+1})(s)-K_{\hat{w}}\hat{h}^{n}(s)-\hat{w}(v) \Gamma_+(f^n,f^n)(s)} \\
		&\leq \abs{K_{\hat{w}}(\hat{h}^{n+1}-\hat{h}^n)(s)}\\
		 &\quad + \hat{w}(v)\left \vert \Gamma_+(f^{n+1}-f^{n}, f^{n+1})\right \vert + \hat{w}(v)\left \vert \Gamma_+(f^{n+1}, f^{n+1}-f^{n})\right \vert\\
		 &\leq C(1+\norm{h^{n}(s)}_{L^\infty}+\norm{h^{n+1}(s)}_{L^\infty})\norm{(\hat{h}^{n+1}-\hat{h}^n)(s)}_{L^\infty},
\end{align*}
where we have used Lemma \ref{Gamma+}.
Then, $K_3^n$ can be further bounded by 
\begin{align} \label{K_3^n}
		K_3^n &\leq C\int_0^t (1+ \norm{h^n(s)}_{L^\infty} +\norm{h^{n+1}(s)}) \norm{(\hat{h}^{n+1}-\hat{h}^n)(s)}_{L^\infty} \,ds\notag\\ 
		&\leq 4Ct (1+\norm{h_0}_{L^\infty}) \sup_{0\leq \tau \leq t} \norm{(\hat{h}^{n+1}-\hat{h}^n)(\tau)}_{L^\infty}. 
\end{align}
By substituting \eqref{K_1^n}, \eqref{K_2^n} and \eqref{K_3^n} to \eqref{local_split.1}, we can deduce
\begin{align*}
	\left \vert(\hat{h}^{n+2}-\hat{h}^{n+1})(t,x,v) \right \vert \leq 4Ct[1+\norm{h_0}_{L^\infty}]\sup_{0\leq \tau \leq t} \norm{(\hat{h}^{n+1} -\hat{h}^n)(\tau)}_{L^\infty}. 
\end{align*}
Hence, one obtains that
\begin{align*}
	\sup_{0\leq \tau \leq t}\norm{(\hat{h}^{n+2}-\hat{h}^{n+1})(\tau)}_{L^\infty} \leq 4Ct[1+\norm{h_0}_{L^\infty}] \sup_{0\leq \tau \leq t} \norm{(\hat{h}^{n+1}-\hat{h}^n)(\tau)}_{L^\infty}.
\end{align*}
If we take the time
\begin{align*}
	\hat{t}_0 := \frac{1}{8C[1+\norm{h_0}_{L^\infty}]} \quad (\leq \hat{t}_1),
\end{align*}
we have the contraction property for the sequence $\{\hat{h}^{n+1}\}$: 
\begin{multline*}
	\sup_{0\leq \tau \leq \hat{t}_0} \norm{(\hat{h}^{n+2}-\hat{h}^{n+1})(\tau)} _{L^\infty} \leq \frac{1}{2} \sup_{0\leq \tau \leq \hat{t}_0} \norm{(\hat{h}^{n+1}-\hat{h}^{n})(\tau)} _{L^\infty}\\
	\leq \cdots \leq \left(\frac{1}{2}\right)^n \sup_{0\leq \tau \leq \hat{t}_0} \norm{\hat{h}^1(\tau)}_{L^\infty}\leq \norm{h_0}_{L^\infty} \left(\frac{1}{2}\right)^{n-1}.
\end{multline*}
We thus prove that $\{f^{n+1} \}$ is a Cauchy sequence. In other words, there exists a function $f(t,x,v)$ satisfying the Boltzmann equation\eqref{f_eqtn} such that  
\begin{align} \label{seq_converge}
	\lim_{n\rightarrow \infty} \sup_{0\leq \tau \leq \hat{t}_0} \left \Vert \hat{w} (f^{n+1}-f)(\tau)\right \Vert_{L^\infty}=0. 
\end{align}
Moreover, the desired estimate \eqref{lem.lx} follows by \eqref{lem.lx.pa1} and also the non-negativity of the solution follows by $F^n(t,x,v)\geq 0$ for any $n$.
To prove the uniqueness of solutions, we suppose that there is another solution $g$ to the Boltzmann equation with the same initial and specular boundary conditions as $f$. Assume that 
\begin{align*}
	\sup_{0\leq \tau \leq \hat{t}_0} \norm{g(\tau)}_{L^\infty}<\infty.
\end{align*}
Then it holds that
\begin{align*}
	\abs{\hat{w}(v)(f-g)(t)} &\leq \int_0^t \abs{K_{\hat{w}}(\hat{w}f-\hat{w}g )(s)+\hat{w}(v)\Gamma_+(f,f)(s)-\hat{w}(v)\Gamma_+(g,g)(s)}\;ds  \\
	&\leq C\left(1+\sup_{0\leq \tau \leq \hat{t}_0}\left[\norm{wf(\tau)}_{L^\infty} + \norm{wg(\tau)}_{L^\infty}\right]\right)\int_0^t 
	\norm{\hat{w}(f-g)(\tau)}_{L^\infty} d\tau,
\end{align*}
which yields that
\begin{align} \label{gronwall}
	\norm{\hat{w}(f-g)(t)}_{L^\infty} \leq C \left(1+\sup_{0\leq \tau \leq \hat{t}_0}\left[\norm{wf(\tau)}_{L^\infty} + \norm{wg(\tau)}_{L^\infty}\right]\right)\int_0^t 
	\norm{\hat{w}(f-g)(\tau)}_{L^\infty} d\tau.
\end{align} 
Applying the Gronwall's inequality to \eqref{gronwall}, the uniqueness of solutions can be obtained. 

It remains to prove the continuity of $f(t,x,v)$ if $f_0$ is continuous except on $\gamma_0$ and satisfies the specular boundary condition. Because of $h^0\equiv 0$, we get the continuity of $h^1(t,x,v)$ on $[0,\hat{t}_0] \times \{\bar{\O}\times \R^3 \backslash \gamma_0\}$ from Lemma 21 in \cite{Guo}. Using Lemma 21 in \cite{Guo} and induction arguments, the continuity of $h^{n+1}(t,x,v)$ on $[0,\hat{t}_0] \times \{\bar{\O}\times \R^3 \backslash \gamma_0\}$ can be derived. By \eqref{seq_converge}, we know that $f^{n+1}(t,x,v)$ converges to $f(t,x,v)$ uniformly. Thus, we have the continuity of $f(t,x,v)$ on on $[0,\hat{t}_0] \times \{\bar{\O}\times \R^3 \backslash \gamma_0\}$ and then the proof of this lemma \ref{local} is completed.
 \end{proof}

\medskip

\noindent{\bf Acknowledgments.}  RJD is partially supported by the General Research Fund (Project No.~14302817) from RGC of Hong Kong and a Direct Grant from CUHK. DL and GK are supported by the National Research Foundation of Korea(NRF) grant funded by the Korea government(MSIT)(No. NRF-2019R1C1C1010915). DL is also supported by the POSCO Science Fellowship of POSCO TJ Park Foundation.

\bibliographystyle{plain}

\end{document}